\newtheorem{theorem}{Theorem}[section]
\newtheorem{proposition}[theorem]{Proposition}
\newtheorem{assumption}[theorem]{Assumption}
\theoremstyle{definition}
\theoremstyle{remark}
\newtheorem{remark}[theorem]{Remark}
\numberwithin{equation}{section}
\begin{document}

\title[An algorithm for the metric nearness problem]{An efficient algorithm for the $\ell_{p}$ norm based metric nearness problem}

\author[P.P. Tang]{Peipei Tang}
\address{School of Computer and
Computing Science, Zhejiang University City College, Hangzhou 310015, China}
\email{tangpp@zucc.edu.cn}

\author[B. Jiang]{Bo Jiang}
\address{School of Computer Science, Zhejiang University, No.38, Zheda Road, Hangzhou 310027, China}
\email{22021105@zju.edu.cn}

\author[C.J. Wang]{Chengjing Wang}
\address{School of Mathematics, Southwest Jiaotong University, No.999, Xian Road, West Park, High-tech Zone, Chengdu 611756, China}
\email{renascencewang@hotmail.com}
\thanks{{\bf The third author is the corresponding author}.}

\subjclass[2000]{90C25, 65K05, 90C06, 49M27, 90C20}



\keywords{semismooth Newton method, proximal augmented Lagrangian method, metric nearness problem, delayed constraint generation method}

\begin{abstract}
Given a dissimilarity matrix, the metric nearness problem is to find the nearest matrix of distances that satisfy the triangle inequalities. This problem has wide applications, such as sensor networks, image processing, and so on. But it is of great challenge even to obtain a moderately accurate solution due to the $O(n^{3})$ metric constraints and the nonsmooth objective function which is usually a weighted $\ell_{p}$ norm based distance. In this paper, we propose a delayed constraint generation method with each subproblem solved by the semismooth Newton based proximal augmented Lagrangian method (PALM) for the metric nearness problem. Due to the high memory requirement for the storage of the matrix related to the metric constraints, we take advantage of the special structure of the matrix and do not need to store the corresponding constraint matrix. A pleasing aspect of our algorithm is that we can solve these problems involving up to $10^{8}$ variables and $10^{13}$ constraints. Numerical experiments demonstrate the efficiency of our algorithm.

In theory, firstly, under a mild condition, we establish a primal-dual error bound condition which is very essential for the analysis of local convergence rate of PALM. Secondly, we prove the equivalence between the dual nondegeneracy condition and nonsingularity of the generalized Jacobian for the inner subproblem of PALM. Thirdly, when $q(\cdot)=\|\cdot\|_{1}$ or $\|\cdot\|_{\infty}$, without the strict complementarity condition, we also prove the equivalence between the the dual nondegeneracy condition and the uniqueness of the primal solution.
\end{abstract}

\maketitle
\section{Introduction}\label{sec:introduction}
In many fields, such as sensor networks, image processing, metric-based indexing of databases, computer vision and machine learning, quantities which measure the distances amongst points in a metric space should be given for further process (see e.g., \cite{Batra2008,Gentile2007,Vitaladevuni2010}). These distance measurements often need to satisfy the properties of a metric, especially the triangle inequality. However, the measurements may end up with a set of values that do not represent the actual distance values due to the noise of the data and errors of measurements, primarily because of the violation of the triangle inequality. Another related challenging problem is the one called correlation clustering (see e.g., \cite{Bansal2004}). Correlation clustering is an NP-hard problem  which aims to identify a weighed graph characterized by pairwise similarity and dissimilarity into groups and cluster the nodes in a way that minimizes the total quantity of mistakes. The mistake at the pair $(i,j)$ is $w_{ij}^{+}$ if the $i$-th and $j$-th nodes are separated but $w_{ij}^{-}$ if the $i$-th and $j$-th nodes are clustered together. Let $\mathcal{S}^{n}$ be the set of real symmetric matrices of order $n$. A matrix $M=(m_{ij})\in\mathcal{S}^{n}$ is called a distance matrix if it is a zero-diagonal matrix and satisfies the following triangle inequalities
\begin{eqnarray*}
	&&m_{ij}\leq m_{ik}+m_{jk},\quad \forall\, 1\leq i<j<k\leq n.
\end{eqnarray*}
Denote $\mathcal{M}_{n}$ be the set of distance matrices of order $n$. The correlation clustering problem can be written formally as the following integer linear programming problem
\begin{eqnarray*}
	&&\min_{X\in\mathcal{M}_{n}}\quad \sum_{1\leq i<j\leq n}\left(w_{ij}^{+}x_{ij}+w_{ij}^{-}(1-x_{ij})\right)\\
	&&\mbox{s.t.}\quad\quad x_{ij}\in\{0,1\},\ \forall\, 1\leq i<j\leq n.
\end{eqnarray*}

Input a set of distances, the metric nearness problem is to find the nearest set of distances satisfying the triangle inequalities in a high dimensional space. The metric nearness problem is a kind of matrix nearness problem (see e.g., \cite{Higham1989}) which aims to find a nearest member of some given class of matrices with some given property such as symmetry, positive definiteness, orthogonality, normality, rank-deficiency and instability. For inferring a metric, Xing et al. \cite{Xing2003} proposed a method for learning a Mahalanobis distance. Roth et al. \cite{Roth2003} applied the constant-shift embedding method (see e.g., \cite{Roth2002}) to metricize and subsequently construct a positive semidefinite matrix for denoising and clustering purposes. An alternative way to derive metric measures is to solve a metric multidimensional scaling problem (see e.g., \cite{Kruskal1978}) with its purpose of approximating the input distances by the distances between these points derived from a prescribed metric space, which is usually an Euclidean space.

In order to capture the confidence in individual dissimilarity measures, especially for the case of altogether missing distances, it is usually adopted as a strategy to use a weight matrix in the metric nearness problem. The approximation error which is called nearness is quantified by a weighted norm function that measures distortion between the input and output distances. There are many kinds of distortion functions including the vector $\ell_{p}$ norms $(p\geq 1)$ and Kullback-Leibler divergence. In this paper, we treat the strict upper triangular part of our matrices as vectors and consider the weighted $\ell_{p}$ norm as the distortion function. As shown in \cite{Veldt2019}, the aforementioned correlation clustering problem can be relaxed to an $\ell_{1}$ norm based metric nearness problem. Given a nonnegative symmetric zero-diagonal dissimilarity matrix $\widetilde{X}=(\tilde{x}_{ij})$, the $\ell_{p}$ norm based metric nearness problem takes the following form
\begin{eqnarray}\label{metric-nearness-problem}
	\min_{X\in\mathcal{M}_{n}} \left(\sum_{1\leq i<j\leq n}|w_{ij}(x_{ij}-\tilde{x}_{ij})|^{p}\right)^{1/p},
\end{eqnarray}
where $W=(w_{ij})$ is a weight matrix and its values reflect the relative confidence in the entries of the matrix $\widetilde{X}$.

During the last two decades, a lot of research has been done for the metric nearness problem. Based on the triangle inequality structure, Dhillon et al. \cite{Dhillon2005} presented a triangle fixing algorithm due to the inherent structure for efficiency gains. Several years later, the implementing details for the $\ell_{p}$ $(p=1,2,\infty)$  norm based metric nearness problem were presented by Brickell et al. \cite{Brickell2008}. They tried to remove the triangle inequality violations in order to improve the computational efficiency. Each iteration of the triangle fixing algorithm takes $O(n^{3})$ operations and it depends on a parameter which may influence the convergence of the algorithm and it is hard to obtain such a parameter with convergence guarantee. Veldt et al. \cite{Veldt2019} applied Dykstra's projection method \cite{Dykstra1983} to solve a regularized linear programming that is closely related to the original $\ell_{1}$ norm based metric nearness problem and also extended the algorithm to solve any metric constrained linear or quadratic programming problems. Due to the low memory requirement, the projection method can solve problems involving up to 11 thousand nodes, $6\times 10^7$ variables, and $7\times 10^{11}$ constraints. It is known from \cite{Escalante2011} that Dykstra's projection method is linearly convergent. A parallel projection method \cite{Ruggles2020} was derived to speed up the convergence in implementation. In order to find a faster scheme to remove the triangle inequality violations, Gabidolla et al. \cite{Gabidolla2019} used deep learning to remove the violations by minimally modifying the input distance matrix. Based on Floyd's shortest path algorithm and the Bregman projection, an active set algorithm was proposed in \cite{sonthalia2020project}, PROJECT AND FORGET, to solve the metric nearness problem.

Although the metric nearness problem (\ref{metric-nearness-problem}) is convex, it is of great challenge to solve it efficiently due to the nonsmooth objective function and $O(n^{3})$ involving constraints. As stated in \cite{Veldt2019}, the $\ell_{1}$ norm based metric nearness problem (\ref{metric-nearness-problem}) can be solved by an interior point method solver such as Gurobi and Mosek when $n$ is not too large. However, the memory cost of the interior point method may quickly become unacceptable when $n$ is large due to the large number of constraints ($n(n-1)(n-2)/2$ constraints). As we know, the parallel projection method \cite{Ruggles2020} is one of a few algorithms that successfully solve the $\ell_{1}$ norm based metric nearness problem (\ref{metric-nearness-problem}) with $n$ up to $10^{4}$, however, many iterations have to be taken with the computational cost $O(n^{3})$ for each iteration. As for the PROJECT AND FORGET algorithm, the memory requirements of the initial few iterations are usually very large for a large $n$.

For simplicity, let $n_{1}=n(n-1)/2$, $n_{2}=n(n-1)(n-2)/2$ and $\mbox{trivec}:\mathcal{S}^{n}\rightarrow\mathcal{R}^{n_{1}}$ be the vectorization operator defined by stacking the columns of the upper triangular part of an input matrix with $\mbox{trivec}(X)=[x_{12},x_{13},x_{23},\ldots,x_{1n},\ldots,\\x_{n-1,n}]^{T}$. By introducing an auxiliary vector $y=\mbox{trivec}(X-\widetilde{X})$, we can rewrite the metric nearness problem (\ref{metric-nearness-problem}) to the following convex composite optimization problem
\begin{eqnarray}\label{primal-problem}
	\min_{y\in\mathcal{R}^{n_{1}}}\Big\{h(Ay-b)+q(Dy)\Big\},
\end{eqnarray}
where $h(\cdot)=\delta_{C}(\cdot)$ and $C=\mathcal{R}_{-}^{n_{2}}$, $A$ is the constraint matrix corresponding to the triangle inequalities, $b=-A\mbox{trivec}(\widetilde{X})$, $D=\mbox{diag}(\mbox{trivec}(W))$ and $\delta_{C}$ is the indicator function of the set $C$, $q(\cdot)=\|\cdot\|_{p}$. All the diagonal entries of the weight matrix $D$ are positive. The dual problem related to problem (\ref{primal-problem}) takes the following form
\begin{eqnarray}\label{dual-problem}
	\min_{u,v}\Big\{h^{*}(u)+q^{*}(v)\, \Big|\, A^{T}u+D^{T}v=0\Big\}.
\end{eqnarray}

As for the convex composite problem (\ref{primal-problem}), there are many existing traditional algorithms available to obtain an approximate solution with a given accuracy. One of these algorithms is the augmented Lagrangian method (ALM) which dates back to \cite{Hestenes1969,Powell1969} and has been extensively studied for the general convex optimization problem in \cite{Rockafellar1976a}. It is also known from \cite{Rockafellar1976a} that for convex programming ALM applied to the primal problem is equivalent to the proximal point algorithm (PPA) applied to its dual form. Although ALM and its inexact variants such as the inexact PPAs proposed by \cite{Solodov1999,Solodov2000} possesses a fast local linear convergence property under some mild conditions, it is usually difficult to solve the corresponding inner subproblems exactly or to a high accuracy, especially for high dimensional composite nonsmooth problems. By introducing some slack variables, an alternative approach to solve problem (\ref{primal-problem}) is the alternating direction method of multipliers (ADMM) (see e.g., \cite{Gabay1976,Glowinski1975}) which deals with the corresponding variables alternately in each inner subproblem. One may refer to \cite{Glowinski2014} for the historical development of ADMM. The main challenge for these algorithms lies in two aspects. In one aspect, the memory requirement is huge when $n$ is large. Although there are only 3 nonzeros in each row of the constraint matrix, the storage needed for the matrix and the dual variable if necessary is still $O(n^{3})$, which may be unacceptable when $n$ is greater than $10^{4}$. In another aspect, the scale of each subproblem corresponding to those algorithms such as ALM and ADMM is $O(n^{2})$. The computational cost is very high, therefore how to find a highly accurate solution of each subproblem efficiently is a tricky problem we need to face. In theory, given the existence of primal and dual solutions, it is already known (see, e.g., \cite{Bonnans2000}) that the nondegeneracy condition of the primal problem implies a unique dual solution and the nondegeneracy condition of the dual problem implies a unique primal solution. However, the primal and dual nondegeneracy conditions do not imply the strict complementarity condition, which is necessary for the validation of the converses, except in the case of linear programming. Does this result still holds in some polyhedral setting such as problem (\ref{primal-problem}) with $p=1,\infty$?

In this paper, We take into account the special structure of the constraints and apply a delayed constraint generation method (DCGM) to deal with the $O(n^{3})$ constraints. We aim to design an asymptotically superlinearly convergent proximal augmented Lagrangian method (PALM) to solve each subproblem of DCGM. The inner subproblem of PALM is solved by the semismooth Newton (SsN) method, which fully takes advantage of the sparse structure of the corresponding Hessian matrix. DCGM, which is known as the cutting plane method \cite{Bertsimas1997}, is a famous approach to handle linear programming problems with a large number of constraints. During the implementation, the feasible set is approximated by a subset of the constraints and more constraints are added into the subset if the resulting solution is infeasible. Choosing a good initial subset of constraints also plays an important role in practical computation. Lin et al. \cite{LinSunToh2021} successfully applied DCGM to solve the shape constrained regression problems, and they randomly generate an initial subset of constraints. In our implementation, we use the zero vector as the initial iteration to generate an initial subset and also remove some constraints which may probably inactive, which reduces the computational cost greatly. Since the constraint matrix is very sparse with each row only 3 nonzero elements and not all of the variables are involved in each subset of constraints, especially for the first few iterations, we only need to solve an inner subproblem of PALM with the number of variables less than $n_{1}$. Numerically, as far as we know, it is the first time that numerical experiments on the $\ell_{\infty}$ norm based metric nearness problem are conducted in this paper with $n$ greater than $10^{4}$. In theory, firstly, under a mild condition, we establish a primal-dual error bound condition which is very essential for the analysis of local convergence rate of PALM. Secondly, we prove the equivalence between the dual nondegeneracy condition and the nonsingularity of the generalized Jacobian for the inner subproblem of PALM, which gives us a guidance of how to choose a proper proximal term of PALM. Thirdly, for the $\ell_{1},\ell_{\infty}$ norm based metric nearness problems, we prove that the nondegeneracy condition is equivalent to the strict Robinson constraint qualification (SRCQ) for the dual problem, which also implies that the critical cone of the primal problem contains only a zero element.

The remaining parts of this paper are organized as follows. In Section \ref{sec:preliminaries}, we introduce some basic concepts and preliminary results on variational analysis. In Section \ref{sec:dcg-palm}, the details of PALM based on DCGM are introduced with each inner subproblem solved by the SsN method. In Section \ref{sec:CQ-nondegeneracy-invertible}, the equivalence of the nondegeneracy condition of the dual problem and the nonsingularity of the corresponding Hessian matrix of the primal problem is obtained. In the case of $q(\cdot)=\|\cdot\|_{1}$ and $\|\cdot\|_{\infty}$, the equivalence between the nondegeneracy condition of the dual problem and the SRCQ is also given. In Section \ref{sec:computing-issues}, we present some computational issues related to our algorithm. In Section \ref{sec:Numerical issues}, we implement the numerical experiments to compare our algorithm with some existing algorithms to demonstrate the efficiency of the proposed algorithm. We conclude our paper in Section \ref{sec:Conclusion}.

\subsection{Additional notations}
\label{sec:Additional notations}
Let $\mathcal{R}^{n}$ be the Euclidean space with the standard inner product $\langle\cdot,\cdot\rangle$ and the norm $\|\cdot\|$. For a given self-adjoint positive semidefinite matrix $M\in\mathcal{R}^{n\times n}$, we denote $\lambda_{\min}(M)$ and $\lambda_{\max}(M)$ as the smallest and largest eigenvalues of $M$, respectively, and define $\|x\|_{M}:=\sqrt{\langle x,Mx\rangle}$ for any $x\in\mathcal{R}^{n}$. Given a set $C\subseteq\mathcal{R}^{n}$, $\mbox{lin}C$ denotes the lineality space with $\mbox{lin}C:=C\cap(-C)$, the relative interior of the set $C$ is denoted by $\mbox{ri}(C)$, the indicator function $\delta_{C}$ of the set $C$ is defined by $\delta_{C}(x)=0$ if $x\in C$, otherwise $\delta_{C}(x)=+\infty$. The weighted distance of $x$ to $C$ is defined by $\mbox{dist}_{M}(x,C):=\inf_{y\in C}\{\|y-x\|_{M}\}$. If $C=\emptyset$, we have $\mbox{dist}_{M}(x,C)=+\infty$ for all $x\in\mathcal{R}^{n}$. $I_{n}$ denotes the identity matrix of order $n$. If $M$ is an identity matrix, we just omit the subscript matrix $M$. For any cone $C\subseteq\mathcal{R}^{n}$, the polar of $C$ is defined to be the cone $C^{\circ}:=\{v\ |\ \langle v,w\rangle\leq0,\ \forall\ w\in C\}$.

\section{Preliminaries}\label{sec:preliminaries}
In this section, we introduce some basic preliminaries that will be used later.

Let $\mathbb{X}$, $\mathbb{Y}$  and $\mathbb{Z}$  be finite dimensional Hilbert spaces and $f:\mathbb{X}\rightarrow[-\infty,+\infty]$ be an extended real-valued function with its epigraph $\mbox{epi}f$ defined as the set
\begin{eqnarray*}
	\mbox{epi}f:=\Big\{(x,c)\ \Big|\ x\in\textrm{dom}(f),\ c\in\mathcal{R},\ f(x)\leq c\Big\}
\end{eqnarray*}
and its conjugate at $x\in\mathbb{X}$ defined by
\begin{eqnarray*}
	f^{*}(x):=\sup_{u\in\textrm{dom}(f)}\Big\{\langle x,u\rangle-f(u)\Big\}.
\end{eqnarray*}
The function $f$ is said to be proper if its epigraph is nonempty and contains no vertical lines, i.e., there exists at least one $x\in\mathbb{X}$ such that $f(x)<+\infty$ and $f(x)>-\infty$ for all $x\in\mathbb{X}$, or in other words, $f$ is proper if and only if $\textrm{dom}(f)$ is nonempty and $f$ is finite in its domain; otherwise, $f$ is improper. For a proper lower semicontinuous function $f$, recall the Moreau envelope function $e_{\sigma f}$ and the proximal mapping $\mbox{Prox}_{\sigma f}$ corresponding to $f$ with a parameter $\sigma>0$ as follows
\begin{eqnarray*}
	e_{\sigma f}(x)&:=&\inf_{u\in\mathbb{X}}\Big\{f(u)+\frac{1}{2\sigma}\|u-x\|^{2}\Big\},\\
	\mbox{Prox}_{\sigma f}(x)&:=&\mathop{\mbox{argmin}}_{u\in\mathbb{X}}\Big\{f(u)+\frac{1}{2\sigma}\|u-x\|^{2}\Big\}.
\end{eqnarray*}
Furthermore, if $f$ is convex, it follows from Theorem 2.26 of \cite{Rockafellar1998} that the Moreau envelope function $e_{\sigma f}$ is also convex and continuously differentiable with
\begin{eqnarray*}
	\nabla e_{\sigma f}(x)=\frac{1}{\sigma}\big(x-\mbox{Prox}_{\sigma f}(x)\big)
\end{eqnarray*}
and the proximal mapping $\mbox{Prox}_{\sigma f}$ is single-valued and continuous with the following Moreau identity (see e.g., Theorem 31.5 of \cite{Rockafellar1970}) holds
\begin{eqnarray*}
	\mbox{Prox}_{\sigma f}(x)+\sigma\mbox{Prox}_{f^{*}/\sigma}(x/\sigma)=x,\quad \forall\ x\in\mathbb{X}.
\end{eqnarray*}

A multifunction $\mathcal{F}:\mathbb{X}\rightrightarrows\mathbb{Y}$ is locally upper Lipschitz continuous at $x\in\mathbb{X}$ if there exist a parameter $\kappa$ which is independent of $x$ and a neighbourhood $\mathcal{U}$ of $x$ such that $\mathcal{F}(y)\subseteq\mathcal{F}(x)+\kappa\|y-x\|\mathbb{B}_{\mathbb{X}}$ holds for any $y\in\mathcal{U}$, where $\mathbb{B}_{\mathbb{X}}$ is a unit ball of the space $\mathbb{X}$. The multifunction $\mathcal{F}$ is said to be piecewise polyhedral if its graph $\mbox{gph}\mathcal{F}:=\{(x,y)\ |\ y\in\mathcal{F}(x)\}$ is the union of finitely many polyhedral convex sets. Furthermore, the inverse of a piecewise polyhedral multifunction is also piecewise polyhedral (see \cite{Robinson1981}). It has also been shown in \cite{Robinson1981} that a piecewise polyhedral multifunction is locally upper Lipschitz continuous everywhere.

The property of locally upper Lipschitz continuity has a fundamental relationship with the error bound condition. A multifunction $\mathcal{F}$ is said to satisfy the error bound condition if for any $\delta>0$ there exists $\kappa>0$ such that
\begin{eqnarray*}
	\mbox{dist}(x,\mathcal{F}^{-1}(0))\leq\kappa\mbox{dist}(0,\mathcal{F}(x))
\end{eqnarray*}
holds for any $x\in\{x\, |\, \mbox{dist}(x,\mathcal{F}^{-1}(0))\leq\delta\}$. It has been proven in Theorem 3H.3 of \cite{DontchevR2013} that a multifunction whose inverse is locally upper Lipschitz continuous at the origin satisfies the error bound condition.

Let $S$ be a subset of $\mathbb{X}$ and $x\in S$. The inner tangent cone and the contingent cone are defined as
\begin{eqnarray*}
	&&\mathcal{T}_{S}^{i}(x):=\liminf_{t\downarrow0}\frac{S-x}{t}
\end{eqnarray*}
and
\begin{eqnarray*}
	&&\mathcal{T}_{S}(x):=\limsup_{t\downarrow0}\frac{S-x}{t},
\end{eqnarray*}
respectively. If $S$ is convex and $x\in S$, it is known from Proposition 2.55 of \cite{Bonnans2000} that $\mathcal{T}^{i}_{S}(x)=\mathcal{T}_{S}(x)$ and it can be written equivalently as
\begin{eqnarray*}
	\mathcal{T}_{S}(x)=\Big\{d\in\mathbb{X}\, \Big|\, \mbox{dist}(x+td,S)=o(t),\ t\geq0\Big\}.
\end{eqnarray*}
The polar cone of the contingent cone is called the normal cone to $S$ at $x$, which can be defined as
\begin{eqnarray*}
	\mathcal{N}_{S}(x):=\left(\mathcal{T}_{S}(x)\right)^{\circ}.
\end{eqnarray*}
Therefore, if $S$ is convex and $x\in S$ we have
\begin{eqnarray*}
	\mathcal{N}_{S}(x)=\Big\{d\in\mathcal{X}\, \Big|\, \langle d,z-x\rangle\leq0,\ \forall\, z\in S\Big\}
\end{eqnarray*}
and $\mathcal{N}_{S}(x)=\emptyset$ if $x\notin S$.

Consider an extended real-valued function $f:\mathbb{X}\rightarrow\overline{\mathcal{R}}$ and a point $x\in\mathbb{X}$ such that $f$ is finite. The upper and lower directional derivatives of $f$ at $x$ are defined as
\begin{eqnarray*}
	&&f'_{+}(x;d):=\limsup_{t\downarrow0}\frac{f(x+td)-f(x)}{t}
\end{eqnarray*}
and
\begin{eqnarray*}
	&&f'_{-}(x;d):=\liminf_{t\downarrow0}\frac{f(x+td)-f(x)}{t},
\end{eqnarray*}
respectively. The function $f$ is said to be directionally differentiable at $x$ in the direction $d$ if $f'_{+}(x;d)=f'_{-}(x;d)$ and we denote it by $f'(x;d)$. The upper and lower directional epiderivatives of $f$ at $x$ are defined as
\begin{eqnarray*}
	&&f^{\downarrow}_{+}(x;d):=\sup_{t_{n}\in S}\left(\liminf_{{n\rightarrow\infty}\atop{d'\rightarrow d}}\frac{f(x+t_{n}d')-f(x)}{t_{n}}\right),
\end{eqnarray*}
and
\begin{eqnarray*}
	&&f^{\downarrow}_{-}(x;d):=\liminf_{{t\downarrow0}\atop{d'\rightarrow d}}\frac{f(x+td')-f(x)}{t},
\end{eqnarray*}
respectively, where $S$ is the set of all real positive sequences $\{t_{n}\}$ converging to zero. We say $f$ is directionally eipdifferentiable at $x$ in the direction $d$ if $f^{\downarrow}_{+}(x;d)=f^{\downarrow}_{-}(x;d)$ and we denote it by $f^{\downarrow}(x;d)$. Note that if $f$ is Lipschitz continuous near $x$, then $f'_{+}(x;d)=f^{\downarrow}_{+}(x;d)$ and $f'_{-}(x;d)=f^{\downarrow}_{-}(x;d)$ for all $d\in\mathbb{X}$. Furthermore, if $f$ is convex and $x\in\mathbb{X}$ with $f(x)$ finite, then the epiderivative $f^{\downarrow}(x;\cdot)$ exists and is convex.

Consider a convex composite problem which takes the following form
\begin{eqnarray}\label{composite-problem-p}
	\min_{x\in\mathbb{X}}\ \Big\{\hat{h}(\mathcal{A}x-\hat{b})+\hat{q}(\mathcal{W}x)\Big\},
\end{eqnarray}
where $\mathcal{A}:\mathbb{X}\rightarrow\mathbb{Y}$ and $\mathcal{W}:\mathbb{X}\rightarrow\mathbb{Z}$ are linear mappings, $\hat{b}\in\mathbb{Y}$ is a given data, $\hat{h}:\mathcal{Y}\rightarrow(-\infty,+\infty]$ and $\hat{q}:\mathcal{W}\rightarrow(-\infty,+\infty]$ are two closed proper convex functions. Let $F(x)=(\mathcal{A}x-b,\mathcal{W}x)$ and $\hat{g}(y,z)=\hat{h}(y)+\hat{q}(z)$, problem (\ref{composite-problem-p}) can be reformulated as the following form
\begin{eqnarray}\label{composite-problem}
	\min_{x\in\mathbb{X}}\ \hat{g}(F(x)).
\end{eqnarray}
The above problem (\ref{composite-problem}) is equivalent to
\begin{eqnarray}\label{composite-problem-equivalent}
	\min_{(x,c)\in\mathbb{X}\times\mathcal{R}}\ \Big\{c\ \Big|\ (F(x),c)\in\mbox{epi}\hat{g}\Big\}.
\end{eqnarray}
The Lagrangian function of (\ref{composite-problem-equivalent}) is
\begin{eqnarray*}
	\mathcal{L}((x,c),(w,\gamma))=\langle w,F(x)\rangle+c(1+\gamma).
\end{eqnarray*}
Let $\mathcal{K}:=\mbox{epi}\hat{g}$. The Karush-Kuhn-Tucker (KKT) system takes the following form
\begin{eqnarray*}
	\nabla F(x)w=0,\ \gamma = -1,\ (w,\gamma)\in\mathcal{N}_{\mathcal{K}}(F(x),\hat{g}(F(x)).
\end{eqnarray*}
Let $(\bar{x},\bar{c})$ be an optimal solution of problem (\ref{composite-problem-equivalent}) with $(\bar{w},-1)$ as the corresponding dual solution. Since the Slater condition always holds for the function $f(x,c):=\hat{g}(F(x))-c$, it is known from Proposition 2.61 of \cite{Bonnans2000} that the tangent cone to $\mathcal{K}$ at the point $(F(\bar{x}),\bar{c})$ is given by
\begin{eqnarray}
	\mathcal{T}_{\mathcal{K}}(F(\bar{x}),\bar{c})=\Big\{(dw,dc)\ \Big|\ \hat{g}^{\downarrow}_{-}(F(\bar{x});dw)\leq dc\Big\}.\label{tangent-cone}
\end{eqnarray}
The Robinson constraint qualification (RCQ) is said to hold at $(\bar{x},\bar{c})$ of problem (\ref{composite-problem-equivalent}) if
\begin{eqnarray}\label{RCQ-composite-problem-equivalent}
	\left(F'(\bar{x}),1\right)(\mathbb{X}\times\mathcal{R})+\mathcal{T}_{\mathcal{K}}(F(\bar{x}),\bar{c})=\mathbb{X}\times\mathcal{R}.
\end{eqnarray}
It follows from Theorem 3.9 and Proposition 3.17 of \cite{Bonnans2000} that the RCQ (\ref{RCQ-composite-problem-equivalent}) holds at a solution $(\bar{x},\bar{c})$ if and only if its related dual solution set is a nonempty, convex and compact set. The SRCQ is said to hold at $(\bar{x},\bar{c})$ for $(\bar{w},-1)$ if
\begin{eqnarray}\label{SRCQ-composite-problem-equivalent}
	\left(F'(\bar{x}),1\right)(\mathbb{X}\times\mathcal{R})+\mathcal{T}_{\mathcal{K}}(F(\bar{x}),\bar{c})\cap (\bar{w},-1)^{\perp}=\mathbb{X}\times\mathcal{R}.
\end{eqnarray}
If the SRCQ (\ref{SRCQ-composite-problem-equivalent}) holds at $(\bar{x},\bar{c})$, then the corresponding dual optimal solution set is a singleton (see e.g., Proposition 4.50 of \cite{Bonnans2000}).

By \eqref{tangent-cone}, we have
\begin{eqnarray}\label{composite-problem-tangent-cone-1}
	\mathcal{T}_{\mathcal{K}}(F(\bar{x}),\bar{c})\cap (\bar{w},-1)^{\perp}&=&\left\{(dw,dc)\ |\ \hat{g}^{\downarrow}_{-}(F(\bar{x});dw)\leq dc,\ \langle \bar{w},dw\rangle-dc=0\right\}\nonumber\\
	&=&\left\{(dw,dc)\ |\ \hat{g}^{\downarrow}_{-}(F(\bar{x});dw)\leq \langle \bar{w},dw\rangle,\ \langle \bar{w},dw\rangle-dc=0\right\}.\nonumber\\
&&
\end{eqnarray}
Note from Corollary 2.4.9 of \cite{Clarke1990} that
\begin{eqnarray}\label{eq:equivalent-normal-partial}
	(\bar{w},-1)\in\mathcal{N}_{\mathcal{K}}(F(\bar{x}),\bar{c})\quad\Leftrightarrow\quad \bar{w}\in\partial \hat{g}(F(\bar{x})),
\end{eqnarray}
then we have
\begin{eqnarray}\label{composite-problem-tangent-cone-2}
	\hat{g}^{\downarrow}_{-}(F(\bar{x});d)\geq \langle \bar{w},d\rangle,\quad \forall\, d\in\mathbb{Y}.
\end{eqnarray}
By combing (\ref{composite-problem-tangent-cone-1}) and (\ref{composite-problem-tangent-cone-2}), we conclude that
\begin{eqnarray*}
	\mathcal{T}_{\mathcal{K}}(F(\bar{x}),\bar{c})\cap (\bar{w},-1)^{\perp}&=&\left\{(dw,dc)\ \Big|\ \hat{g}^{\downarrow}_{-}(F(\bar{x});dw)= \langle\bar{w},dw\rangle=dc\right\}
\end{eqnarray*}
and the SRCQ of problem (\ref{composite-problem}) at $\bar{x}$ for $\bar{w}$ takes the following form
\begin{eqnarray*}
	F'(\bar{x})\mathbb{X}+\Big\{dw\ \Big|\ \hat{g}^{\downarrow}_{-}(F(\bar{x});dw)=\langle \bar{w},dw\rangle\Big\}=\mathbb{Y}.
\end{eqnarray*}

The nondegeneracy condition of problem (\ref{composite-problem-equivalent}) is said to hold at $(\bar{x},\bar{c})$ if
\begin{eqnarray*}
	(F'(\bar{x}),1)(\mathbb{X}\times\mathcal{R})+\mbox{lin}(\mathcal{T}_{\mathcal{K}}(F(\bar{x}),\bar{c}))=\mathbb{Y}.
\end{eqnarray*}
Due to the formulation of $\mathcal{T}_{\mathcal{K}}(F(\bar{x}),\bar{c})$, the nondegeneracy condition of problem (\ref{composite-problem}) at $\bar{x}$ takes the following form
\begin{eqnarray*}
	F'(\bar{x})\mathbb{X}+\Big\{d\ \Big|\ \hat{g}^{\downarrow}_{-}(F(\bar{x});d)\leq-\hat{g}^{\downarrow}_{-}(F(\bar{x});-d)\Big\}=\mathbb{Y}.
\end{eqnarray*}
The critical cone related to problem (\ref{composite-problem-equivalent}) at $(\bar{x},\bar{c})$ takes the following form
\begin{eqnarray*}
	\mathcal{C}(\bar{x},\bar{c})=\Big\{(dx,dc)\ \Big|\ \hat{g}^{\downarrow}_{-}\big(F(\bar{x}),F'(\bar{x}) dx\big)=0,\ dc=0\Big\},
\end{eqnarray*}
which can be written equivalently as
\begin{eqnarray}\label{critical-cone-equivalent}
	\mathcal{C}(\bar{x},\bar{c})=\Big\{(dx,dc)\ \Big|\ (F'(\bar{x})dx,dc)\in\mathcal{T}_{\mathcal{K}}(F(\bar{x}),\bar{c})\ \cap\ \{(\bar{w},-1)\}^{\perp}\Big\}.
\end{eqnarray}
The critical cone of problem (\ref{composite-problem}) at $\bar{x}$ can be written in the form
\begin{eqnarray*}
	\mathcal{C}(\bar{x})=\Big\{dx\ \Big|\ \hat{g}^{\downarrow}_{-}\big(F(\bar{x}),F'(\bar{x}) dx\big)=0\Big\}.
\end{eqnarray*}

\begin{proposition}\label{prop-critical-cone-linearspace-dualvalue}
	The critical cone $\mathcal{C}(\bar{x})$ is a linear subspace if and only if
	\begin{eqnarray*}
		\bar{w}\in\mbox{ri}(\partial\hat{g}(F(\bar{x}))).
	\end{eqnarray*}
\end{proposition}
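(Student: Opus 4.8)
The plan is to translate the statement ``$\mathcal{C}(\bar{x})$ is a linear subspace'' into a condition on the epiderivative $\hat{g}^{\downarrow}_{-}(F(\bar{x});\cdot)$, and then to recognize the latter condition as characterizing when $\bar{w}$ lies in the relative interior of $\partial\hat{g}(F(\bar{x}))$. First I would record the convenient description of the critical cone that already appears in the excerpt, namely
\begin{eqnarray*}
	\mathcal{C}(\bar{x})=\Big\{dx\ \Big|\ \hat{g}^{\downarrow}_{-}\big(F(\bar{x});F'(\bar{x}) dx\big)=0\Big\},
\end{eqnarray*}
together with the fact (from (\ref{composite-problem-tangent-cone-2}), which is a restatement of $\bar{w}\in\partial\hat{g}(F(\bar{x}))$) that $\hat{g}^{\downarrow}_{-}(F(\bar{x});d)\geq\langle\bar{w},d\rangle$ for all $d$, so that $\hat{g}^{\downarrow}_{-}(F(\bar{x});F'(\bar{x})dx)=0$ is equivalent to $\hat{g}^{\downarrow}_{-}(F(\bar{x});F'(\bar{x})dx)\leq\langle\bar{w},F'(\bar{x})dx\rangle$ together with $\langle\bar{w},F'(\bar{x})dx\rangle=0$. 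Since $\hat{g}$ is convex with $\hat{g}(F(\bar{x}))$ finite, $\phi(d):=\hat{g}^{\downarrow}_{-}(F(\bar{x});d)=\hat{g}'(F(\bar{x});d)$ is a sublinear (positively homogeneous convex) function whose subdifferential at the origin is exactly $\partial\hat{g}(F(\bar{x}))$.

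The key observation is then a general fact about a sublinear function $\phi$: the set $L:=\{d\ |\ \phi(d)=0\text{ and }\phi(-d)=0\}=\{d\ |\ \phi(d)\leq0,\ \phi(-d)\leq0\}$ is the largest linear subspace on which $\phi$ is linear, and $\mathcal{C}(\bar{x})=(F'(\bar{x}))^{-1}(\{d\ |\ \phi(d)=0\})$ is a linear subspace if and only if $\phi$ restricted to the range condition forces $\phi(d)=0\Rightarrow\phi(-d)=0$ on $\{d=F'(\bar{x})dx\}$. More to the point, I would argue directly: $\mathcal{C}(\bar{x})$ is always a convex cone containing $0$, and it is a subspace iff $dx\in\mathcal{C}(\bar{x})$ implies $-dx\in\mathcal{C}(\bar{x})$, i.e. iff $\phi(F'(\bar{x})dx)=0$ implies $\phi(-F'(\bar{x})dx)=0$. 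Using $\phi(h)\geq\langle\bar{w},h\rangle$ and $\phi(-h)\geq-\langle\bar{w},h\rangle$, when $\phi(h)=0$ we get $\langle\bar{w},h\rangle\leq0$; the subspace property is precisely the requirement that no direction $h\in\operatorname{range}F'(\bar{x})$ can have $\phi(h)=0$ while $\phi(-h)>0$, equivalently every supporting hyperplane contact is two-sided.

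Finally I would invoke the standard characterization (e.g. from \cite{Rockafellar1970}, or via \cite{Bonnans2000}) that for a closed proper convex function $\hat g$, one has $\bar w\in\operatorname{ri}(\partial\hat g(F(\bar x)))$ if and only if the support function $\phi=\hat g'(F(\bar x);\cdot)$ of the set $\partial\hat g(F(\bar x))$ is \emph{linear} precisely on the subspace $\{d\ |\ \phi(d)=-\phi(-d)\}$ \emph{and} this subspace is exactly the orthogonal complement of $\operatorname{aff}(\partial\hat g(F(\bar x)))-\bar w$; more usefully, $\bar w\in\operatorname{ri}(\partial\hat g(F(\bar x)))$ iff $\langle\bar w,h\rangle<\phi(h)$ for every $h\notin (\operatorname{par}\partial\hat g(F(\bar x)))^{\perp}$, where $\operatorname{par}$ denotes the parallel subspace of the affine hull. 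Combining this with the previous paragraph: $dx\in\mathcal{C}(\bar{x})$ forces $F'(\bar x)dx$ to be orthogonal to $\operatorname{par}\partial\hat g(F(\bar x))$ (otherwise $\phi(F'(\bar x)dx)>\langle\bar w,F'(\bar x)dx\rangle\geq$ would contradict $\phi=0$ together with $\langle\bar w,\cdot\rangle\le 0$... ), on which subspace $\phi$ is linear, hence $\phi(-F'(\bar x)dx)=-\phi(F'(\bar x)dx)=0$, giving $-dx\in\mathcal{C}(\bar{x})$; conversely if $\bar w\notin\operatorname{ri}(\partial\hat g(F(\bar x)))$ there is $h=F'(\bar x)dx$ with $\phi(h)=\langle\bar w,h\rangle=0$ but $\phi(-h)>0$, so $dx\in\mathcal{C}(\bar{x})\setminus(-\mathcal{C}(\bar{x}))$.

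The main obstacle I anticipate is the precise ``relative interior'' bookkeeping in the last step: one must be careful that the direction $h$ realizing a one-sided contact can be taken in the range of $F'(\bar x)$ (this is where the condition is stated as a property of $\mathcal{C}(\bar{x})$ rather than of $\partial\hat g$ alone — and in fact the claim as stated suggests that for this problem class the relevant contact directions are automatically attainable, perhaps because $\hat g=\hat h\oplus\hat q$ is separable and $F'(\bar x)$ is surjective onto each block, or one reduces to the case $F'(\bar x)=\mathrm{id}$), and that ``$\phi$ linear on a subspace'' is correctly matched with the relative-interior description of $\partial\hat g(F(\bar x))$ via $\operatorname{ri}\partial\hat g(F(\bar x))=\{w\ |\ \partial\phi(0)\ni w,\ w\text{ exposed only by directions in }(\operatorname{lin-hull})^{\perp}\}$. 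I would handle this by first proving the equivalence in the clean form ``$\{d\ |\ \phi(d)=0\}$ is a subspace $\iff\bar w\in\operatorname{ri}\partial\hat g(F(\bar x))$'' for a general sublinear $\phi$ with $\bar w\in\partial\phi(0)$, and then noting that the linear preimage under $F'(\bar x)$ of a subspace is a subspace while the preimage being a subspace forces (given $\phi\ge\langle\bar w,\cdot\rangle$) the relevant contact set to already be a subspace.
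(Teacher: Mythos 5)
Your argument is correct in substance and rests on exactly the same key fact as the paper's proof, only in dual clothing. The paper lifts to the epigraph formulation, applies Proposition~2.4.1 of \cite{FacchineiPand2003} to $\mathcal{K}=\mbox{epi}\hat{g}$ (the critical cone $\mathcal{T}_{\mathcal{K}}\cap(\bar{w},-1)^{\perp}$ is a subspace iff $(\bar{w},-1)\in\mbox{ri}(\mathcal{N}_{\mathcal{K}}(F(\bar{x}),\bar{c}))$), and then translates back to $\bar{w}\in\mbox{ri}(\partial\hat{g}(F(\bar{x})))$ via \eqref{eq:equivalent-normal-partial} and Theorem~6.8 of \cite{Rockafellar1970}. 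You instead stay at the level of the support function $\phi=\hat{g}^{\downarrow}_{-}(F(\bar{x});\cdot)$ of $\partial\hat{g}(F(\bar{x}))$ and use the polar form of the same lemma: the contact cone $\{h\mid\phi(h)=\langle\bar{w},h\rangle\}=\mathcal{N}_{\partial\hat{g}(F(\bar{x}))}(\bar{w})$ is a subspace iff $\bar{w}\in\mbox{ri}(\partial\hat{g}(F(\bar{x})))$. That avoids the epigraph bookkeeping and the appeal to Theorem~6.8, which is a small gain in directness; what it costs is that you must identify $\mathcal{C}(\bar{x})$ as the preimage of that contact cone under $F'(\bar{x})$ yourself (using $F'(\bar{x})^{*}\bar{w}=0$ from the KKT system, so that $\langle\bar{w},F'(\bar{x})dx\rangle\equiv0$), whereas the paper hides this in the preimage formulation \eqref{critical-cone-equivalent}. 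The obstacle you flag at the end is genuine: ``the preimage of the contact cone is a subspace'' only implies ``the contact cone is a subspace'' if the one-sided contact directions can be realized in $\mbox{range}\,F'(\bar{x})$, and your closing claim that the preimage being a subspace ``forces the contact set to already be a subspace'' is not justified without surjectivity of $F'(\bar{x})$ or an equivalent reduction. You should be aware, however, that the paper's own one-line proof silently passes over exactly the same point when it equates the subspace property of $\mathcal{C}(\bar{x},\bar{c})$ with that of $\mathcal{T}_{\mathcal{K}}(F(\bar{x}),\bar{c})\cap(\bar{w},-1)^{\perp}$, so on this issue you are, if anything, more careful than the source.
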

\begin{proof}
	The critical cone $\mathcal{C}(\bar{x})$ is a linear subspace if and only if the critical cone $\mathcal{C}(\bar{x},\bar{c})$ is a linear subspace. Combining the equivalent form (\ref{critical-cone-equivalent}) of the critical cone $\mathcal{C}(\bar{x},\bar{c})$ with $(\bar{w},-1)\in\mathcal{N}_{\mathcal{K}}(F(\bar{x}),\bar{c})$, the critical cone $\mathcal{C}(\bar{x},\bar{c})$ is a linear subspace if and only if
	\begin{eqnarray*}
		(\bar{w},-1)\in\mbox{ri}\left(\mathcal{N}_{\mathcal{K}}(F(\bar{x}),\bar{c})\right),
	\end{eqnarray*}
	due to Proposition 2.4.1 of \cite{FacchineiPand2003}.
	The desired result follows from \eqref{eq:equivalent-normal-partial} and Theorem 6.8 of \cite{Rockafellar1970}.
\end{proof}

\begin{assumption}\label{assumption}
	Assume that zero is not an optimal solution of problem (\ref{primal-problem}). 
\end{assumption}
Assumption \ref{assumption} makes sense due to the basic fact that zero is the unique solution of problem (\ref{primal-problem}) if all the elements of $b$ are nonnegative.

Now we state some basic results for further use. First, we present a known proposition as below, which can be seen in Theorem 6.46 of \cite{Rockafellar1998}.
\begin{proposition}\label{prop-Tangent-Normal-Cone}
	For a polyhedral set $C=\{x\, |\, \hat{A}x-\hat{b}\leq0\}$, where $\hat{A}\in \mathcal{R}^{\hat{m}\times \hat{n}}$ and $\hat{b}\in \mathcal{R}^{\hat{m}}$, we have
	\begin{eqnarray*}
		\mathcal{T}_{C}(x)&=&\Big\{d\, \Big| \, \langle \hat{A}_{i}^{T},d\rangle\leq0,\ \mbox{for}\ i\in \mathcal{I}(x)\Big\},\\
		\mathcal{N}_{C}(x)&=&\Big\{\hat{A}^{T}d\ \Big|\ d_{i}\geq0,\ \mbox{for}\ i\in\mathcal{I}(x),\ d_{i}=0,\ \mbox{for}\ i\notin\mathcal{I}(x)\Big\},\\
	\end{eqnarray*}
	where $\mathcal{I}(x)=\Big\{i\, \Big|\, \langle \hat{A}_{i}^{T},x\rangle-\hat{b}_{i}=0\Big\}$, $\hat{A}_{i}$ is the $i$th row of the matrix $\hat{A}$.
\end{proposition}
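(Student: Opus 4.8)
The plan is to establish the tangent-cone formula first, by a direct two-sided inclusion, and then to deduce the normal-cone formula from the polarity relation $\mathcal{N}_{C}(x)=(\mathcal{T}_{C}(x))^{\circ}$ together with the standard description of the polar of a polyhedral cone. Throughout I will use that $C$ is convex (being polyhedral), so that by the remarks preceding the proposition the contingent cone coincides with the inner tangent cone and may be computed from feasible directions. It is convenient to abbreviate $a_{i}:=\hat{A}_{i}^{T}$ and $K:=\{d\mid\langle a_{i},d\rangle\le 0,\ i\in\mathcal{I}(x)\}$.

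For the inclusion $\mathcal{T}_{C}(x)\subseteq K$ I would take $d\in\mathcal{T}_{C}(x)$, choose $t_{k}\downarrow 0$ and $d_{k}\to d$ with $x+t_{k}d_{k}\in C$, and observe that for $i\in\mathcal{I}(x)$ the equality $\langle a_{i},x\rangle=\hat{b}_{i}$ turns $\langle a_{i},x+t_{k}d_{k}\rangle\le\hat{b}_{i}$ into $\langle a_{i},d_{k}\rangle\le 0$; letting $k\to\infty$ gives $\langle a_{i},d\rangle\le 0$, i.e.\ $d\in K$. For the reverse inclusion $K\subseteq\mathcal{T}_{C}(x)$, given $d\in K$ I note that for active $i$ one has $\langle a_{i},x+td\rangle=\hat{b}_{i}+t\langle a_{i},d\rangle\le\hat{b}_{i}$ for every $t\ge 0$, whereas for inactive $i$ the strict inequality $\langle a_{i},x\rangle<\hat{b}_{i}$ persists on some interval $[0,t_{i}]$ by continuity; since the index set is finite, $t_{0}:=\min\{t_{i}\mid i\notin\mathcal{I}(x)\}>0$ serves uniformly, so $x+td\in C$ for all $t\in[0,t_{0}]$, and taking the constant sequence $d_{k}\equiv d$ in the definition of $\mathcal{T}_{C}(x)$ shows $d\in\mathcal{T}_{C}(x)$. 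This yields $\mathcal{T}_{C}(x)=K$, the asserted formula.

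For the normal cone I would use $\mathcal{N}_{C}(x)=K^{\circ}$ and show that the polar of $K$ equals the finitely generated cone $\{\sum_{i\in\mathcal{I}(x)}\lambda_{i}a_{i}\mid\lambda_{i}\ge 0\}$. One inclusion is immediate, since $\langle\sum_{i}\lambda_{i}a_{i},w\rangle=\sum_{i}\lambda_{i}\langle a_{i},w\rangle\le 0$ whenever $\lambda_{i}\ge 0$ and $w\in K$; the other inclusion is Farkas' lemma, proved by separating any $v\notin\{\sum_{i}\lambda_{i}a_{i}\}$ from that cone (a separation valid because a finitely generated cone is closed), the separating vector $w$ then satisfying $\langle a_{i},w\rangle\le 0$ for all $i\in\mathcal{I}(x)$ and $\langle v,w\rangle>0$, hence $w\in K$ but $v\notin K^{\circ}$. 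Writing a generator $\sum_{i\in\mathcal{I}(x)}\lambda_{i}a_{i}$ as $\hat{A}^{T}d$ for the vector $d$ with $d_{i}=\lambda_{i}\ge 0$ when $i\in\mathcal{I}(x)$ and $d_{i}=0$ otherwise produces exactly the description in the statement.

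I do not anticipate a genuine obstacle: this is a classical polyhedral fact (recorded as Theorem 6.46 of \cite{Rockafellar1998}). The only two points that deserve care are the appeal to finiteness of the constraint system in order to extract a single admissible step length $t_{0}$ in the second tangent-cone inclusion, and the closedness of the finitely generated cone $\mathrm{cone}\{a_{i}\mid i\in\mathcal{I}(x)\}$, which is precisely what makes the Farkas-type argument for the polar legitimate; both are standard.
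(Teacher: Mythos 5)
Your proof is correct. Note, however, that the paper does not prove this proposition at all: it is stated as a known fact with a pointer to Theorem 6.46 of \cite{Rockafellar1998}, so there is no argument in the paper to compare against. What you have written is the standard self-contained derivation: the two-sided inclusion for the tangent cone (using convexity of $C$ to identify the contingent cone with the cone of feasible directions, and finiteness of the constraint system to extract a uniform step length $t_{0}$), followed by polarity and Farkas' lemma for the normal cone. The two points you flag yourself --- the uniform $t_{0}$ and the closedness of the finitely generated cone $\mathrm{cone}\{\hat{A}_{i}^{T}\mid i\in\mathcal{I}(x)\}$, which is what legitimizes the separation step --- are exactly the places where care is needed, and your treatment of both is adequate. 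The only gain from writing the proof out rather than citing it is self-containedness; the only cost is length.
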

In the following, we introduce some facts related to the functions $h$ and $q$. It is easy to compute the conjugate function of $h$ as
\begin{eqnarray*}
	h^{*}(u) = \delta_{C_{1}}(u),
\end{eqnarray*}
where $C_{1}=\mathcal{R}^{n_{2}}_{+}$. Then $\mbox{Prox}_{h^{*}}(u)=\Pi_{C_{1}}(u)$.
Based on Example 2.67 of \cite{Bonnans2000}, for $d\in\mathcal{R}^{n_{2}}$ we have
\begin{eqnarray*}
	h^{*\downarrow}_{-}(u;d)=\delta_{\mathcal{T}_{C_{1}}(u)}(d)
\end{eqnarray*}
with
\begin{eqnarray*}
	\mathcal{T}_{C_{1}}(u)&=&\Big\{d\ \Big|\ d_{i}\geq0,\ \mbox{if}\ u_{i}=0\Big\},\\
	\mbox{lin}\mathcal{T}_{C_{1}}(u)&=&\Big\{d\ \Big|\ d_{i}=0,\ \mbox{if}\ u_{i}=0\Big\},
\end{eqnarray*}
due to Proposition \ref{prop-Tangent-Normal-Cone}. Given $\sigma>0$, some basic results related to the function $q$ are presented in the following form.
\begin{enumerate}
	\item $q(\cdot)=\|\cdot\|_{1}$ or $q(\cdot)=\|\cdot\|_{\infty}$.
	
	There exists $C_{2}=\{v\, |\, \bar{A}v\leq \bar{b}\}$ with $\bar{A}\in\mathcal{R}^{m_{1}\times n_{1}}$ and $\bar{b}\in\mathcal{R}^{m_{1}}$ (the dependence of $\bar{A},\bar{b}$ with $p$ is dropped out here) such that for any $v\in\mathcal{R}^{n_{1}}$, we have
	\begin{eqnarray*}
		q^{*}(v)&=&\delta_{C_{2}}(v),\\
		\mbox{Prox}_{\sigma q^{*}}(v)&=&\Pi_{C_{2}}(v).
	\end{eqnarray*}
	For more details, if $q(\cdot)=\|\cdot\|_{1}$, we have
	\begin{eqnarray*}
		\left(\Pi_{C_{2}}(v)\right)_{i}=\left\{\begin{array}{ll}v_{i},&\mbox{if}\ |v_{i}|\leq1,\\\mbox{sign}(v_{i}),&\mbox{otherwise}.\end{array}\right.
	\end{eqnarray*}
	If $q(\cdot)=\|\cdot\|_{\infty}$, we have
	\begin{eqnarray*}
		\Pi_{C_{2}}(v)=\left\{\begin{array}{ll}v,&\mbox{if}\ \|v\|_{1}\leq 1,\\
			P_{v}\Pi_{\triangle}(P_{v}v),&\mbox{otherwise},\end{array}\right.
	\end{eqnarray*}
	where $P_{v}=\mbox{diag}(\mbox{sign}(v))$, $\triangle=\{v\, |\, v_{1}+\ldots+v_{n_{1}}=1\}$ and $\Pi_{\triangle}(P_{v}v)$ can be computed in $O(n_{1}\log n_{1})$ operations. One may see Appendix A of \cite{LinSunToh2021} for more details.
	
	It is known from Proposition \ref{prop-Tangent-Normal-Cone} that
	\begin{eqnarray*}
		\mathcal{T}_{C_{2}}(v)=\Big\{d\ \Big|\ \langle \bar{A}_{j}^{T},d\rangle\leq0,\ j\in J_{2}\Big\}
	\end{eqnarray*}
	with $J_{2}:=\{j\ |\ \bar{A}_{j}v=\bar{b}_{j}\}$.
	Hence we have
	\begin{eqnarray*}
		\mbox{lin}\mathcal{T}_{C_{2}}(v)&=&\cap_{j\in J_{2}}\{\bar{A}_{j}^{T}\}^{\perp}.
	\end{eqnarray*}
	\item $q(\cdot)=\|\cdot\|$.
	
	Let $C_{2}=\{v\, |\, \|v\|\leq 1\}$. We have
	\begin{eqnarray*}
		q^{*}(v)&=&\delta_{C_{2}}(v),\\
		\mbox{Prox}_{q^{*}}(v)&=&\Pi_{C_{2}}(v)=\left\{\begin{array}{ll}v,&\mbox{if}\ \|v\|\leq1,\\
			\frac{v}{\|v\|},&\mbox{otherwise},\end{array}\right.
	\end{eqnarray*}
	and
	\begin{eqnarray*}
		&&\mathcal{T}_{C_{2}}(v)=\left\{\begin{array}{cc}\{d\, |\, \langle d,v\rangle\leq 0\},&\quad\mbox{if}\quad\|v\|=1,\\
			\mathcal{R}^{n_{1}},&\quad\mbox{if}\quad\|v\|<1,\end{array}\right.\\
		&&\mbox{lin}\mathcal{T}_{C_{2}}(v)=\left\{\begin{array}{cc}\{v\}^{\perp},&\quad\mbox{if}\quad\|v\|=1,\\
			\mathcal{R}^{n_{1}},&\quad\mbox{if}\quad\|v\|<1.\end{array}\right.
	\end{eqnarray*}
\end{enumerate}
\section{A DCGM based PALM for the primal problem}\label{sec:dcg-palm}
In this section, we introduce PALM for the primal problem (\ref{primal-problem}) with DCGM to improve the efficiency. PALM is an efficient algorithm to solve the metric nearness problem due to its superlinear convergence which will be proved later. Although we can apply PALM directly to the original primal problem (\ref{primal-problem}), it is of great challenge to obtain a desired approximate solution of the corresponding large scale subproblem when $n$ is huge. DCGM approximates the feasible set by only a subset of the constraints with more constraints added if the resulting solution is infeasible. We apply PALM to each reduced problem generated by DCGM, which reduces the computational cost greatly. Taking advantage of the special structure of these problems, an SsN method is used to find an approximate solution for each subproblem of PALM. An interesting thing we need to mention is that we do not need to store the corresponding constraint matrix. The implementation details will be introduced in the following sections.

\subsection{PALM}
Before introducing PALM, we first write out the Lagrangian function for the composite problem (\ref{primal-problem}).
\begin{eqnarray*}
	l(y;u,v)&=&\inf_{s\in\mathcal{R}^{n_{2}},t\in\mathcal{R}^{n_{1}}}\Big\{h(Ay-b-s)+q(Dy-t)+\langle u,s\rangle+\langle v,t\rangle\Big\}\\
	&=&-h^{*}(u)-q^{*}(v)+\langle u,Ay-b\rangle+\langle v,Dy\rangle.
\end{eqnarray*}
The KKT condition for the composite problem takes the following form
\begin{eqnarray}\label{composite-problem-kkt}
	A^{T}u+D^{T}v=0,\ Ay-b\in\partial h^{*}(u),\ Dy\in\partial q^{*}(v).
\end{eqnarray}
Given $\sigma>0$, the augmented Lagrangian function for the composite problem (\ref{primal-problem}) is
\begin{eqnarray*}
	\mathcal{L}_{\sigma}(y;u,v)&=&\sup_{s\in\mathcal{R}^{n_{2}},t\in\mathcal{R}^{n_{1}}}\Big\{l(y,s,t)-\frac{1}{2\sigma}\|s-u\|^{2}-\frac{1}{2\sigma}\|t-v\|^{2}\Big\}\\&=&\sigma^{-1}e_{\sigma h^{*}}(u+\sigma(Ay-b))+\sigma^{-1}e_{\sigma q^{*}}(v+\sigma Dy)\\
	&&+\langle u,Ay-b\rangle+\frac{\sigma}{2}\|Ay-b\|^{2}+\langle v,Dy\rangle+\frac{\sigma}{2}\|Dy\|^{2}.
\end{eqnarray*}
Now we are ready to introduce the following PALM.
\bigskip

\noindent\fbox{\parbox{\textwidth}{\noindent{\bf Algorithm 1 (PALM): \label{palm}} Let $\{\nu_{k}\}$ be a nonnegative summable sequence and $\{H_{k}\}$ be a sequence of positive definite matrix satisfying $H_{k+1}\preceq(1+\nu_{k})H_{k}$, $H_{k}\succeq\lambda_{\min}I_{n_{1}}$ for $k\geq0$ and $\limsup_{k\rightarrow\infty}\lambda_{\max}(H_{k})=\lambda_{\infty}$ with $0<\lambda_{\min}\leq\lambda_{\infty}<+\infty$. Given $\sigma_{0}>0$, choose $y^{0}\in\mathcal{R}^{n_{1}},u^{0}\in\mathcal{R}^{n_{2}},v^{0}\in\mathcal{R}^{n_{1}}$. Set $k=0$ and iterate:
		\begin{description}
			\item [Step 1.] Find an approximate solution
			\begin{eqnarray}
				y^{k+1}&\approx&\mathop{\rm argmin}_{y\in\mathcal{R}^{n_{1}}}\Big\{\phi_{k}(y):=\mathcal{L}_{\sigma_{k}}(y;u^{k},v^{k})+\frac{1}{2\sigma_{k}}\|y-y^{k}\|_{H_{k}}^{2}\Big\}.\label{eq:PALM}
			\end{eqnarray}
			\item [Step 2.] Update
			\begin{eqnarray*}
				u^{k+1}&=&u^{k}+\sigma_{k}(Ay^{k+1}-b-\mbox{Prox}_{\sigma_{k}^{-1}h}(Ay^{k+1}-b+\sigma_{k}^{-1}u^{k}))\\
				&=&\mbox{Prox}_{\sigma_{k}h^{*}}(\sigma_{k}(Ay^{k+1}-b)+u^{k}),\\
				v^{k+1}&=&v^{k}+\sigma_{k}(Dy^{k+1}-\mbox{Prox}_{\sigma_{k}^{-1}q}(Dy^{k+1}+\sigma_{k}^{-1}v^{k}))\\
				&=&\mbox{Prox}_{\sigma_{k}q^{*}}(\sigma_{k}Dy^{k+1}+v^{k}).
			\end{eqnarray*}
			\item [Step 3.] If a desired stopping criterion is satisfied, terminate; otherwise, update $\sigma_{k+1}\uparrow\sigma_{\infty}\leq\infty$ and return to Step 1.
\end{description}}}

\bigskip

\subsection{The convergence results of PALM}
In this subsection, we introduce the convergence results of PALM mentioned above. One may see \cite{LiSunToh2020} for more details. Let $M_{k}:=\mbox{Diag}(H_{k},I,I)$, $\Omega$ be the solution set of the KKT system (\ref{composite-problem-kkt}) and $\mathcal{T}_{l}$ be a maximal monotone operator with
\begin{eqnarray*}
	\mathcal{T}_{l}(y,u,v):=\Big\{(y',u',v')\ \Big|\ (y',-u',-v')\in\partial l(y,u,v)\Big\}.
\end{eqnarray*}
Let $\{\epsilon_{k}\}$ and $\{\delta_{k}\}$ be two given summable sequences with $\epsilon_{k}\geq0$ and $0\leq\delta_{k}<1$ for all $k\geq0$. There are two general stopping criteria for the inner subproblem (\ref{eq:PALM}).
\begin{eqnarray*}
	(A')\quad\|\nabla\phi_{k}(y^{k+1})\|&\leq&\frac{\epsilon_{k}\sqrt{\lambda^{k}_{\min}}}{\sigma_{k}},\\
	(B')\quad\|\nabla\phi_{k}(y^{k+1})\|&\leq&\frac{\delta_{k}\sqrt{\lambda^{k}_{\min}}}{\sigma_{k}}\|(y^{k+1},u^{k+1},v^{k+1})-(y^{k},u^{k},v^{k})\|_{M_{k}},
\end{eqnarray*}
where $\lambda^{k}_{\min}=\min\big\{\lambda_{\min}(H_{k}),1\big\}$. Now we present the convergence results of PALM.
\begin{theorem}
	Let $\{(y^{k},u^{k},v^{k})\}$ be the sequence generated by PALM with the stopping criterion $(A')$. Then the sequence $\{(y^{k},u^{k},v^{k})\}$ is bounded with $\{y^{k}\}$ converging to an optimal solution of the primal problem (\ref{primal-problem}) and $\{(u^{k},v^{k})\}$ converging to an optimal solution of the dual problem (\ref{dual-problem}).
\end{theorem}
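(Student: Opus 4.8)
The plan is to recognize PALM as an inexact, variable-metric proximal point algorithm (PPA) applied to the maximal monotone operator $\mathcal{T}_l$, and then to invoke the corresponding convergence theory in the spirit of Rockafellar's classical analysis of the multiplier method, adapted to the varying metric $M_k=\mbox{Diag}(H_k,I,I)$ (this is precisely the setting of \cite{LiSunToh2020}). First I would show that Steps 1--2 together characterize $(y^{k+1},u^{k+1},v^{k+1})$ as an approximate solution of the proximal subproblem
\[
0\in \mathcal{T}_l(y,u,v)+\sigma_k^{-1}M_k\big((y,u,v)-(y^k,u^k,v^k)\big).
\]
Indeed, since $\mathcal{L}_{\sigma_k}$ arises from $l$ by Moreau regularization in the slack variables $(s,t)$, differentiating $\phi_k$ in $y$ together with the explicit updates $u^{k+1}=\mbox{Prox}_{\sigma_k h^*}(\sigma_k(Ay^{k+1}-b)+u^k)$ and $v^{k+1}=\mbox{Prox}_{\sigma_k q^*}(\sigma_k Dy^{k+1}+v^k)$ encodes exactly the above inclusion, up to the residual $\nabla\phi_k(y^{k+1})$ carried in the $y$-block. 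Because $\phi_k$ is $\sigma_k^{-1}\lambda_{\min}(H_k)$-strongly convex and the maps $\mbox{Prox}_{\sigma_k h^*}$, $\mbox{Prox}_{\sigma_k q^*}$ are nonexpansive, one can bound the $M_k$-distance from $(y^{k+1},u^{k+1},v^{k+1})$ to the exact proximal point by a constant multiple of $\sigma_k\|\nabla\phi_k(y^{k+1})\|/\sqrt{\lambda^k_{\min}}$; hence criterion $(A')$ is exactly the summable-error criterion $(A)$ of the inexact PPA, the factor $\sqrt{\lambda^k_{\min}}/\sigma_k$ being tuned so that $\sum_k\epsilon_k<\infty$ produces summable proximal errors.

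Next I would invoke the convergence theorem for the inexact variable-metric PPA. Since $\{H_k\}$ satisfies $H_k\succeq\lambda_{\min}I_{n_1}$, $H_{k+1}\preceq(1+\nu_k)H_k$ with $\sum_k\nu_k<\infty$, and $\limsup_k\lambda_{\max}(H_k)<\infty$, the metrics $M_k$ are uniformly equivalent and ``almost nonincreasing'', which permits building a quasi-Fej\'er-type inequality for $\{\|(y^k,u^k,v^k)-z^\ast\|_{M_k}\}$ for any $z^\ast\in\Omega$, using monotonicity of $\mathcal{T}_l$ and criterion $(A')$. This yields boundedness of $\{(y^k,u^k,v^k)\}$ and $\|(y^{k+1},u^{k+1},v^{k+1})-(y^k,u^k,v^k)\|_{M_k}\to0$; a cluster point then exists and, by closedness of $\mbox{gph}\,\mathcal{T}_l$ together with the vanishing proximal residuals, lies in $\mathcal{T}_l^{-1}(0)=\Omega$; a second application of the quasi-Fej\'er inequality upgrades this to convergence of the whole sequence to some $(y^\infty,u^\infty,v^\infty)\in\Omega$. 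Since membership in $\Omega$ means the KKT system (\ref{composite-problem-kkt}) holds, $y^\infty$ solves the primal problem (\ref{primal-problem}) and $(u^\infty,v^\infty)$ solves the dual problem (\ref{dual-problem}) by convex duality, which is the assertion.

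I expect the main obstacle to be two-fold. The first delicate point is that the whole argument rests on $\Omega\neq\emptyset$: one needs a primal--dual KKT pair, which follows once (\ref{primal-problem}) has a minimizer together with a constraint qualification. Here the polyhedrality of $C=\mathcal{R}_{-}^{n_{2}}$ and of the structure attached to $q$ (for $q=\|\cdot\|_1$ or $\|\cdot\|_\infty$), or a Slater-type argument when $q=\|\cdot\|$, gives the subdifferential sum rule and solvability of (\ref{composite-problem-kkt}); this must be checked, with Assumption \ref{assumption} ruling out the trivial degenerate case. The second, genuinely technical, point is making the equivalence ``$\|\nabla\phi_k(y^{k+1})\|$ small $\iff$ approximate proximal step'' quantitatively precise in the variable metric, i.e. controlling all three blocks simultaneously: the $y$-block via strong convexity of $\phi_k$, and the $(u,v)$-blocks via the nonexpansiveness of the proximal maps in the multiplier updates, so that the mixed metric $M_k$ (hence the constant $\lambda^k_{\min}=\min\{\lambda_{\min}(H_k),1\}$) enters correctly. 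Once these are in place, the remainder is the by-now standard inexact PPA machinery.
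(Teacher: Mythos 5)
Your proposal is correct and follows essentially the same route as the paper, which states this theorem without proof and defers to the inexact variable-metric proximal point framework of \cite{LiSunToh2020} applied to the maximal monotone operator $\mathcal{T}_{l}$ with the summable-error criterion $(A')$. The steps you outline (identifying Steps 1--2 with an approximate proximal step for $\mathcal{T}_{l}$ in the metric $M_{k}$, the quasi-Fej\'er argument under the almost-nonincreasing metrics, and nonemptiness of $\Omega$ via polyhedrality or a Slater-type argument) are exactly the ingredients of that cited analysis.
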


\begin{theorem}\label{thm:palm-convergence-rate}
Let $\delta$ be a real number with $\delta>\sum_{k=0}^{\infty}\epsilon_{k}$. Assume that there exists $\kappa>0$ such that for any $(y,u,v)$ satisfying $\mbox{dist}((y,u,v),\mathcal{T}_{l}^{-1}(0))<\delta$ it holds that
	\begin{eqnarray}\label{cond:error-bound}
		\mbox{dist}((y,u,v),\mathcal{T}_{l}^{-1}(0))\leq\kappa\mbox{dist}(0,\mathcal{T}_{l}(y,u,v)).
	\end{eqnarray}
	Let $\{(y^{0},u^{0},v^{0})\}$ be an initial point with $\mbox{dist}_{M_{0}}((y^{0},u^{0},v^{0}),\mathcal{T}_{l}^{-1}(0))<\delta-\sum_{k=0}^{\infty}\epsilon_{k}$. Then the sequence $\{(y^{k},u^{k},v^{k})\}$ generated by PALM under the criteria $(A')$ and $(B')$ has the following property
	\begin{eqnarray*}
		\mbox{dist}_{M_{k+1}}((y^{k+1},u^{k+1},v^{k+1}),\mathcal{T}_{l}^{-1}(0))\leq\mu_{k}\mbox{dist}_{M_{k}}((y^{k},u^{k},v^{k}),\mathcal{T}_{l}^{-1}(0)),\,  \forall\, k\geq0,
	\end{eqnarray*}
	where
	\begin{eqnarray*}
		\mu_{k}=\frac{1+\nu_{k}}{1-\delta_{k}}\left(\delta_{k}+\frac{(1+\delta_{k})\kappa\lambda_{m}^{k}}{\sqrt{\sigma_{k}^{2}+\kappa^{2}(\lambda_{m}^{k})^{2}}}\right)\rightarrow\mu_{\infty}=\frac{\kappa\lambda_{m}}{\sqrt{\sigma_{\infty}^{2}+\kappa^{2}\lambda_{m}^{2}}}<1,
	\end{eqnarray*}
	$\lambda_{m}^{k}=\max\{\lambda_{max}(H_{k}),1\}$ and $\lambda_{m}=\max\{\lambda_{max}(H_{\infty}),1\}$.
\end{theorem}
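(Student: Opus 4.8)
# Proof Proposal for Theorem \ref{thm:palm-convergence-rate}

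The plan is to reduce the convergence-rate statement to the abstract theory of the inexact proximal point algorithm (PPA) applied to the maximal monotone operator $\mathcal{T}_{l}$, following the approach of Rockafellar and its refinements in \cite{LiSunToh2020,Solodov1999}. The first step is to recall that PALM applied to the primal problem (\ref{primal-problem}) is exactly the (preconditioned, inexact) PPA applied to $\mathcal{T}_{l}$: by the standard duality between the augmented Lagrangian method and PPA on the dual, together with the interpretation of the proximal term $\frac{1}{2\sigma_k}\|y-y^k\|_{H_k}^2$ as a variable-metric $M_k = \mathrm{Diag}(H_k,I,I)$ preconditioner, the update $(y^{k+1},u^{k+1},v^{k+1})$ is an approximate solution of the inclusion $0 \in \mathcal{T}_{l}(z) + \sigma_k^{-1} M_k (z - z^k)$, where $z^k = (y^k,u^k,v^k)$. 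One then checks that the stopping criteria $(A')$ and $(B')$, stated in terms of $\|\nabla\phi_k(y^{k+1})\|$, translate into Rockafellar's criteria $(A)$ and $(B)$ on the residual of the proximal inclusion, using that $\nabla\phi_k(y^{k+1})$ equals (a scaled version of) the distance of $0$ to the regularized operator at $z^{k+1}$; the factor $\sqrt{\lambda^k_{\min}}$ with $\lambda^k_{\min}=\min\{\lambda_{\min}(H_k),1\}$ is precisely what is needed to dominate the $M_k$-norm by the Euclidean norm.

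Next I would invoke the error bound hypothesis (\ref{cond:error-bound}): since $\mathcal{T}_l^{-1}(0) = \Omega$ is nonempty (guaranteed by existence of KKT solutions under Assumption \ref{assumption} and the qualification conditions), and since (\ref{cond:error-bound}) says $\mathcal{T}_l^{-1}$ is calm/metrically subregular at $0$ within a $\delta$-neighborhood, the standard PPA contraction estimate applies. Concretely, writing $\hat z^{k+1}$ for the exact proximal point of $z^k$, one has the nonexpansiveness-type bound $\mathrm{dist}_{M_k}(\hat z^{k+1},\Omega) \le \frac{\kappa\lambda_m^k}{\sqrt{\sigma_k^2+\kappa^2(\lambda_m^k)^2}}\,\mathrm{dist}_{M_k}(z^k,\Omega)$, coming from the resolvent identity combined with the error bound applied at $\hat z^{k+1}$ (the quantity $\sigma_k^{-1}M_k(\hat z^{k+1}-z^k) \in \mathcal{T}_l(\hat z^{k+1})$ controls $\mathrm{dist}(0,\mathcal{T}_l(\hat z^{k+1}))$). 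The inexactness from criterion $(B')$ contributes the extra $\delta_k$ and $(1+\delta_k)$ factors, and the metric change $H_{k+1}\preceq(1+\nu_k)H_k$ contributes the $(1+\nu_k)$ factor; assembling these yields the stated $\mu_k$. The condition $\mathrm{dist}_{M_0}(z^0,\Omega) < \delta - \sum_k\epsilon_k$, together with summability of $\{\epsilon_k\}$ and criterion $(A')$, guarantees by induction that all iterates $\hat z^{k+1}$ stay within the $\delta$-ball where (\ref{cond:error-bound}) is valid — this is the role of the gap $\delta > \sum_k \epsilon_k$.

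Finally, taking $k\to\infty$ and using $\sigma_k\uparrow\sigma_\infty$, $\nu_k\to 0$, $\delta_k\to 0$, $\lambda_m^k\to\lambda_m$, one obtains $\mu_k\to\mu_\infty = \kappa\lambda_m/\sqrt{\sigma_\infty^2+\kappa^2\lambda_m^2} < 1$, which is strictly less than one whenever $\sigma_\infty>0$ (and tends to $0$ if $\sigma_\infty=\infty$), giving the asymptotic linear — indeed superlinear if $\sigma_\infty=\infty$ — rate. I expect the main obstacle to be the bookkeeping in the first step: carefully verifying that the Euclidean-norm stopping criteria $(A')$–$(B')$ imply the $M_k$-metric proximal criteria with the correct constants, and that $\nabla\phi_k(y^{k+1})$ indeed coincides with the natural residual of the variable-metric proximal inclusion (this uses the explicit formulas for $u^{k+1},v^{k+1}$ in Step 2 as proximal maps, together with the Moreau identity and differentiability of the Moreau envelopes $e_{\sigma h^*}$, $e_{\sigma q^*}$). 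Once that identification is in place, the contraction estimate is a direct application of the inexact variable-metric PPA theory, so I would cite \cite{LiSunToh2020} for the detailed derivation rather than reproduce it in full.
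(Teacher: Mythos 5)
Your proposal is correct and follows essentially the same route as the paper, which states this theorem without proof and simply refers the reader to \cite{LiSunToh2020} for the details of the inexact variable-metric PPA analysis. Your outline --- identifying PALM with the preconditioned inexact proximal point iteration on $\mathcal{T}_{l}$, translating the criteria $(A')$ and $(B')$ into Rockafellar-type residual conditions via $\lambda^{k}_{\min}$, and then invoking the error bound (\ref{cond:error-bound}) for the contraction factor $\mu_{k}$ --- is exactly the argument that reference carries out, so deferring the detailed derivation to \cite{LiSunToh2020} is precisely what the authors do.
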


\begin{remark}\label{rem:palm}
	It is interesting that if $H_{k}\equiv I_{n_{1}}$, then the above PALM becomes the classical PALM which dates back to \cite{Rockafellar1976a,Rockafellar1976b}. The proximal term $\frac{1}{2\sigma_{k}}\|y-y^{k}\|^{2}_{H_{k}}$ added in each subproblem not only guarantees the nonsingularity of the corresponding Hessian matrix but also improves the efficiency of PALM. However, the convergence rate of PALM also depends on $\{H_{k}\}$ and we need to balance these two counterparts. It is usually difficult to choose an appropriate proximal term in advance. Fortunately, we prove an equivalent condition in Section \ref{sec:CQ-nondegeneracy-invertible} to measure the nonsingularity of the corresponding Hessian matrix without the proximal term $\frac{1}{2\sigma_{k}}\|y-y^{k}\|^{2}_{H_{k}}$. Therefore, we can update $H_{k}$ adaptively, which helps to improve the performance of the algorithm.
\end{remark}

From Theorem \ref{thm:palm-convergence-rate}, an error bound condition for the maximal monotone $\mathcal{T}_{l}$ is needed for the local convergence rate of PALM. As mentioned in Section \ref{sec:preliminaries}, every polyhedral multifunction is upper Lipschitz continuous at every point of its domain, therefore it also satisfies the error bound condition (\ref{cond:error-bound}). Hence, for the case of $p=1,\infty$, the error bound condition for the maximal monotone $\mathcal{T}_{l}$ automatically holds. As for the case of $p=2$, we can prove the corresponding error bound condition based on a similar proof idea from \cite{IzmailovKS} as follows.

\begin{proposition}
	For the $\ell_{2}$ norm based metric nearness problem (\ref{primal-problem}), let $(\bar{y},\bar{u},\bar{v})$ be a solution of the KKT system (\ref{composite-problem-kkt}). Suppose that Assumption \ref{assumption} holds. Then the desired error bound condition (\ref{cond:error-bound}) is valid at $(\bar{y},\bar{u},\bar{v})$.
\end{proposition}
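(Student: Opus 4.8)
The plan is to exploit that, under Assumption~\ref{assumption}, the only non\-polyhedral ingredient of the KKT system~(\ref{composite-problem-kkt}) --- the Euclidean norm $q=\|\cdot\|$ --- is in fact \emph{smooth} at the point of interest, and then to reduce the error bound~(\ref{cond:error-bound}) to an error bound for a $C^{1,1}$ (locally $C^{\infty}$) KKT system, which in turn follows from a second-order sufficient condition along the lines of \cite{IzmailovKS}.

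\emph{Step 1: smoothness at $\bar y$.} Since $(\bar y,\bar u,\bar v)$ solves the KKT system of the convex problem~(\ref{primal-problem}), $\bar y$ is optimal for~(\ref{primal-problem}), so Assumption~\ref{assumption} forces $\bar y\neq 0$; as $D$ has strictly positive diagonal, $D\bar y\neq 0$. Hence $q=\|\cdot\|$ is $C^{\infty}$ in a neighbourhood of $D\bar y$, and $p(y):=q(Dy)=\|Dy\|$ is convex and $C^{\infty}$ near $\bar y$, with $\nabla p(\bar y)=\|D\bar y\|^{-1}D^{2}\bar y$, $\langle\nabla p(\bar y),\bar y\rangle=\|D\bar y\|>0$, and $\nabla^{2}p(\bar y)=\|D\bar y\|^{-1}D(I-ww^{T})D\succeq0$ with $w:=D\bar y/\|D\bar y\|$, $\|w\|=1$, so that $\ker\nabla^{2}p(\bar y)=\mbox{span}\{\bar y\}$. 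Moreover $\bar v\in\partial q(D\bar y)=\{w\}$, hence $\|\bar v\|=1$.

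\emph{Step 2: local reduction to a $C^{1,1}$ KKT system.} Near $(\bar y,\bar u,\bar v)$ the inclusion $Dy-\xi_{3}\in\partial q^{*}(v)=\mathcal{N}_{C_{2}}(v)$ (with $C_{2}$ the Euclidean unit ball and $Dy-\xi_{3}$ bounded away from $0$) forces $\|v\|=1$ and $v=(Dy-\xi_{3})/\|Dy-\xi_{3}\|$, a smooth, locally bi-Lipschitz correspondence for which $D^{T}v=\nabla p(y)$ when $\xi_{3}=0$. Substituting this into $\xi\in\mathcal{T}_{l}(y,u,v)$ turns the perturbed KKT system, locally, into a canonically perturbed KKT system of the reduced convex problem $\min_{y}\{p(y)+\delta_{C}(Ay-b)\}$ (a $C^{1,1}$, indeed locally $C^{\infty}$, objective over a polyhedron), composed with the above change of variable in $v$. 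Since $\mbox{dist}((y,u,v),\mathcal{T}_{l}^{-1}(0))$ and $\mbox{dist}(0,\mathcal{T}_{l}(y,u,v))$ are then comparable, up to a constant, to the corresponding quantities for the reduced KKT operator $\mathcal{T}_{\mathrm{red}}$, it suffices to establish~(\ref{cond:error-bound}) for $\mathcal{T}_{\mathrm{red}}$ at $(\bar y,\bar u)$.

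\emph{Step 3: second-order condition and conclusion.} For the reduced problem the critical cone at $\bar y$ satisfies $\mathcal{C}(\bar y)\subseteq\{d:\langle\nabla p(\bar y),d\rangle=0\}$; combined with $\langle\nabla p(\bar y),\bar y\rangle>0$ this gives $\mathcal{C}(\bar y)\cap\mbox{span}\{\bar y\}=\{0\}$, i.e. $\mathcal{C}(\bar y)\cap\ker\nabla^{2}p(\bar y)=\{0\}$. As $\nabla^{2}p(\bar y)\succeq0$, this is exactly the strong second-order sufficient condition $\langle d,\nabla^{2}p(\bar y)d\rangle>0$ for all $d\in\mathcal{C}(\bar y)\setminus\{0\}$. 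The constraints being affine, the Robinson constraint qualification holds and the multiplier set is nonempty, convex and compact, so the noncritical-multiplier / upper-Lipschitz-stability error-bound theory for $C^{1,1}$ KKT systems (as in \cite{IzmailovKS}) yields the local error bound~(\ref{cond:error-bound}) for $\mathcal{T}_{\mathrm{red}}$ at $(\bar y,\bar u)$; transferring back through the smooth correspondence of Step~2 proves it for $\mathcal{T}_{l}$ at $(\bar y,\bar u,\bar v)$. The hardest part is Step~2: one must check carefully that the error-bound inequality is genuinely preserved when the non-polyhedral dual block $v$ is eliminated and the perturbations are smoothly reparametrized, and that the precise hypotheses of the cited $C^{1,1}$ error-bound result --- in particular its handling of possibly non-unique multipliers --- do apply to the reduced problem; the second-order verification in Step~3 is short once the structural facts of Step~1 are in place.
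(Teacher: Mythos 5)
Your proposal is correct and follows essentially the same route as the paper: both exploit that Assumption \ref{assumption} forces $D\bar{y}\neq 0$ so that $q=\|\cdot\|$ is smooth near $D\bar{y}$, reduce everything to the second-order fact that the kernel of the Hessian of $\|D\cdot\|$ at $\bar{y}$ is $\mbox{span}\{\bar{y}\}$ and meets the critical cone only at $0$ (via $\langle A\bar{y},\bar{u}\rangle=-\|D\bar{y}\|\neq 0$), and treat the polyhedral block $h^{*}$ by piecewise linearity. The only difference is packaging: the paper runs the Izmailov--Kurennoy--Solodov contradiction argument directly on the canonically perturbed KKT system (so your delicate Step 2 elimination of $v$ and the appeal to the general noncritical-multiplier theorem are replaced by an explicit computation of the limiting direction $d$ and of $\mbox{dist}((u,v),\mathcal{M}(\bar{y}))$ via the piecewise-affine map $\theta$).
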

\begin{proof}
	Let $\mathcal{M}(\bar{y})$ be the set of dual
	solutions associated with $\bar{y}$. In
	order to prove the error bound condition of $\mathcal{T}_{l}$ at
	$(\bar{y},\bar{u},\bar{v})$, we first claim that
	there exists a neighborhood $\mathcal{U}$ of
	$(\bar{y},\bar{u},\bar{v})$ such that for any
	solution $(y(t),u(t),v(t))\in\mathcal{U}$ of the perturbed
	KKT system
	\begin{eqnarray*}
		A^{T}u+D^{T}v-t_{1}=0,\quad Ay-b-t_{2}\in\partial h^{*}(u),\quad Dy-t_{3}\in\partial q^{*}(v),
	\end{eqnarray*}
	it satisfies the following estimation
	\begin{eqnarray}\label{eq:estimation}
		\|y(t)-\bar{y}\|=O(\|t\|),
	\end{eqnarray}
	where the norm of $t=(t_{1},t_{2},t_{3})$ is sufficiently
	small. We first suppose that the claim is not true. Therefore we can
	find some sequences
	$\{t^{k}:=(t_{1}^{k},t_{2}^{k},t_{3}^{k})\}$ and
	$\{(y^{k},u^{k},v^{k})\}$ such that $t^{k}\rightarrow
	0$,
	$(y^{k},u^{k},v^{k})\rightarrow(\bar{y},\bar{u},\bar{v})$,
	$(y^{k},u^{k},v^{k})$ is a solution of the perturbed KKT
	system for $t=t^{k}$ and
	\begin{eqnarray}
		\|y^{k}-\bar{y}\|>\gamma_{k}\|t^{k}\|
		\label{ineq:gammak}
	\end{eqnarray}
	for some $\gamma_{k}>0$ with $\gamma_{k}\rightarrow\infty$. In the
	following proof, we may pass to the subsequence if necessary. From
	the above assumption, we can see that
	$\left\{\frac{y^{k}-\bar{y}}{\|y^{k}-\bar{y}\|}\right\}$
	converges to some $d$ with $\|d\|=1$. Let
	$s_{k}=\|y^{k}-\bar{y}\|$. Since $h^{*}(u)=\delta_{C}^{*}(u)$ with $C=\mathcal{R}^{n_{2}}_{-}$, for each $k$ by the
	perturbed KKT system we have,
	\begin{eqnarray*}
		Ay^{k}-b-t_{2}^{k}=\Pi_{C}(Ay^{k}-b-t_{2}^{k}+u^{k}).
	\end{eqnarray*}
	It follows from Theorem 4.1.1 of \cite{FacchineiPand2003} that
	\begin{eqnarray}
		Ay^{k}-b-t_{2}^{k}&=&\Pi_{C}(A\bar{y}-b+\bar{u})+\Pi_{\mathcal{D}}(A(y^{k}-\bar{y})-t_{2}^{k}+u^{k}-\bar{u})\nonumber\\
		&=&A\bar{y}-b+\Pi_{\mathcal{D}}(A(y^{k}-\bar{y})-t_{2}^{k}+u^{k}-\bar{u})\label{Pi-C}
	\end{eqnarray}
	with $\mathcal{D}=\mathcal{T}_{C}(A\bar{y}-b)\cap \{\bar{u}\}^{\perp}$.
	Since $\mathcal{D}$ is a polyhedral cone, it follows from Proposition 4.1.4 of \cite{FacchineiPand2003} that there exist $l$ orthogonal projectors $E_{1},E_{2},\ldots,E_{l}$ such that
	\begin{eqnarray*}
		\Pi_{\mathcal{D}}(y)\in\{E_{1}y,E_{2}y,\ldots,E_{l}y\},\quad \quad\forall\, y\in\mathcal{R}^{n_{2}}.
	\end{eqnarray*}
	Therefore, we may further assume that there exists $1\leq i\leq l$ such that for all $k\geq 1$,
	\begin{eqnarray}
		\Pi_{\mathcal{D}}(A(y^{k}-\bar{y})-t_{2}^{k}+u^{k}-\bar{u})&=&E_{i}(A(y^{k}-\bar{y})-t_{2}^{k}+u^{k}-\bar{u})\nonumber\\
		&=&\Pi_{Range(E_{i})}(A(y^{k}-\bar{y})-t_{2}^{k}+u^{k}-\bar{u}).\label{Pi-Range}
	\end{eqnarray}
	Let $L=\mbox{Range}(E_{i})$ and we obtain from \eqref{Pi-C} and \eqref{Pi-Range} that
	\begin{eqnarray*}
		L\cap\mathcal{D}\ni (A(y^{k}-\bar{y})-t_{2}^{k})\ \perp\ (u^{k}-\bar{u})\in\mathcal L^{\perp}\cap {\mathcal{D}}^{\circ}.
	\end{eqnarray*}
	Since $A(y^{k}-\bar{y})-t_{2}^{k}=s_{k}Ad+o(s_{k})$ and $L\cap\mathcal{D}$ is a closed convex cone, we obtain $Ad\in L\cap\mathcal{D}$.
	
	In addition, by Assumption \ref{assumption} we know that for $t^{k}$ sufficiently small, $Dy^{k}-t_{3}^{k}\neq 0$ and we denote
	\begin{eqnarray*}
		v^{k}=\nabla q(Dy^{k}-t_{3}^{k})\quad\mbox{and}\quad \bar{v}=\nabla q(D\bar{y}).
	\end{eqnarray*}
	It follows that
	\begin{eqnarray*}
		\nabla q(Dy^{k}-t_{3}^{k})-\nabla q(D\bar{y})&=&\nabla q(D\bar{y}+s_{k}Dd)-\nabla q(D\bar{y})+\nabla q(Dy^{k}-t_{3}^{k})-\nabla q(D\bar{y}+s_{k}Dd)\\
		&=&s_{k}(\nabla q)'(D\bar{y};Dd)+o(s_{k})+O(\|Dy^{k}-D\bar{y}-s_{k}Dd-t_{3}^{k}\|)\\
		&=&s_{k}(\nabla q)'(D\bar{y};Dd)+o(s_{k}).
	\end{eqnarray*}
	Therefore, we have
	\begin{eqnarray*}
		0&=&s_{k}(\nabla q)'(D\bar{y};Dd)+o(s_{k})-(v^{k}-\bar{v})\\
		&=&s_{k}(\nabla q)'(D\bar{y};Dd)+o(s_{k})+D^{-T}A^{T}(u^{k}-\bar{u})-D^{-T}t_{1}^{k}\\
		&=&s_{k}(\nabla q)'(D\bar{y};Dd)+o(s_{k})+D^{-T}A^{T}(u^{k}-\bar{u}).
	\end{eqnarray*}
	Since $L^{\perp}\cap\mathcal{D}^{\circ}$ is a polyhedral cone, we know from Theorem 19.3 of \cite{Rockafellar1970} that $D^{-T}A^{T}(L^{\perp}\cap\mathcal{D}^{\circ})$ is still a polyhedral cone. There exists $\eta\in L^{\perp}\cap\mathcal{D}^{\circ}$ such that
	\begin{eqnarray}\label{primal-sosc}
		\langle Dd,(\nabla q)'(D\bar{y};Dd)\rangle=-\langle Dd,D^{-T}A^{T}\eta\rangle=-\langle Ad,\eta\rangle=0.
	\end{eqnarray}
	Note that
	\begin{eqnarray*}
		0=\langle Dd,(\nabla q)'(D\bar{y};Dd)\rangle=\frac{\|Dd\|^2}{\|D\bar{y}\|}-\frac{[(D\bar{y})^T(Dd)]^2}{\|D\bar{y}\|^3},
	\end{eqnarray*}
	thus it is obvious that (\ref{primal-sosc}) holds if and only if there exists $\lambda\in\mathcal{R}$ such that $d=\lambda\bar{y}$. Since
	\begin{eqnarray*}
		0=\langle Ad,\bar{u}\rangle=\langle d,A^{T}\bar{u}\rangle=-\langle d,D^{T}\bar{v}\rangle=-\langle Dd,\bar{v}\rangle=-\lambda\langle D\bar{y},\bar{v}\rangle=-\frac{\lambda}{\|D\bar{y}\|}\langle D\bar{y},D\bar{y}\rangle,
	\end{eqnarray*}
	we have $\lambda=0$, which is a contradiction with $\|d\|=1$.
	
	We have already proven that there exists a neighborhood
	$\mathcal{U}$ of $(\bar{y},\bar{u},\bar{v})$, the
	equality (\ref{eq:estimation}) is valid for any solution
	$(y(t),u(t),v(t))$ of the perturbed KKT system with
	$(y(t),u(t),v(t))\in\mathcal{U}$.
	
	Next we define the following two mappings
	\begin{eqnarray*}
		&&\Theta_{KKT}(y,u,v,t):=\left(\begin{array}{l}
			A^{T}u+D^{T}v-t_{1}\\
			u-\mbox{Prox}_{h^{*}}(Ay-b-t_{2}+u)\\
			v-\frac{Dy-t_{3}}{\|Dy-t_{3}\|}
		\end{array}\right),\\&&\quad\quad\quad\quad\quad\quad\quad\quad\forall\,
		(y,u,v,t)\in\mathcal{R}^{n_{1}}\times\mathcal{R}^{n_{2}}\times\mathcal{R}^{n_{1}}\times\mathcal{R}^{2n_{1}+n_{2}},\\
		&&\theta(u,v):=\Theta_{KKT}(\bar{y},u,v,0),\quad
		\forall\, (u,v)\in\mathcal{R}^{n_{2}}\times\mathcal{R}^{n_{1}}.
	\end{eqnarray*}
	Then we have $\theta(u,v)=0$ if and only if
	$(u,v)\in\mathcal{M}(\bar{y})$. Since
	$\mbox{Prox}_{h^{*}}(\cdot)$ is piecewise affine, $\theta(u,v)$ is a piecewise affine function and thus a polyhedral multifunction, therefore the error
    bound condition holds. Besides, $f(y)=\frac{y}{\|y\|}$ is Lipschitz continuous if $y\neq 0$. Hence, we can see
	that for any $t$ with its norm closely enough to zero and any
	solution $(y(t),u(t),v(t))\in\mathcal{U}$ associated with
	the perturbed KKT system, we have
	\begin{eqnarray*}
		&&\mbox{dist}((u(t),v(t)),\mathcal{M}(\bar{y}))\\
		&=&O(\|\theta(u(t),v(t))\|)\\
		&=&O(\|\Theta_{KKT}(\bar{y},u(t),v(t),0)-\Theta_{KKT}(y(t),u(t),v(t),t)\|)\\
		&=&O(\|y(t)-\bar{y}\|)+O(\|t\|).
	\end{eqnarray*}
	Together with the estimation (\ref{eq:estimation}), there exists a constant $\kappa>0$ such that
	\begin{eqnarray*}
		\|y(t)-\bar{y}\|+\mbox{dist}((u(t),v(t)),\mathcal{M}(\bar{y}))\leq\kappa\|t\|.
	\end{eqnarray*}
	Therefore the desired result follows.
\end{proof}

\subsection{An SsN method for solving the subproblem (\ref{eq:PALM})}
Note that it is essentially important to solve the subproblem (\ref{eq:PALM}) of PALM with a desired accuracy. In this subsection, we discuss how to apply the SsN method to obtain an approximate solution of the corresponding subproblem efficiently. For simplicity, we omit the superscript or subscript $k$. Given $\sigma>0$, $\tilde{y}\in\mathcal{R}^{n_{1}},\tilde{u}\in\mathcal{R}^{n_{2}},\tilde{v}\in\mathcal{R}^{n_{1}}$, we can rewrite the subproblem (\ref{eq:PALM}) as the following form
\begin{eqnarray}\label{subproblem-real}
	\min_{y\in\mathcal{R}^{n_{1}}}\phi(y),
\end{eqnarray}
where
\begin{eqnarray*}
	\phi(y)&:=& \varphi(y)+\frac{1}{2\sigma}\|y-\tilde{y}\|_{H}^{2},\\
	\varphi(y)&:=&\mathcal{L}_{\sigma}(y;\tilde{u},\tilde{v})=\sigma e_{\sigma^{-1}h}(Ay-b+\sigma^{-1}\tilde{u})+\sigma e_{\sigma^{-1}q}(Dy+\sigma^{-1}\tilde{v}).
\end{eqnarray*}
Since the function $\phi$ is strongly convex and smooth, finding the solution of problem (\ref{subproblem-real}) is equivalent to solving the following system of equations
\begin{eqnarray*}
	\nabla\phi(y)&=&\nabla\varphi(y)+\sigma^{-1}H(y-\tilde{y})=0,
\end{eqnarray*}
where
\begin{eqnarray*}
	\nabla\varphi(y)=A^{T}\mbox{Prox}_{\sigma h^{*}}(\sigma(Ay-b)+\tilde{u})+D^{T}\mbox{Prox}_{\sigma q^{*}}(\sigma Dy+\tilde{v}).
\end{eqnarray*}
Since the two mappings $\mbox{Prox}_{\sigma^{-1}h}(\cdot)$ and $\mbox{Prox}_{\sigma^{-1}q}(\cdot)$ are Lipschitz continuous if $q(\cdot)=\|\cdot\|_{1}$, $\|\cdot\|$ or $\|\cdot\|_{\infty}$, the following multifunction
\begin{eqnarray*}
	\hat{\partial}^{2}\varphi(y)&:=&\sigma A^{T}\partial\mbox{Prox}_{\sigma h^{*}}(\sigma(Ay-b)+\tilde{u})A+\sigma D^{T}\partial\mbox{Prox}_{\sigma q^{*}}(\sigma Dy+\tilde{v})D
\end{eqnarray*}
is well defined. It is known from \cite{Hiriart-Urruty} that
\begin{eqnarray*}
	\partial^{2}\varphi(y)d=\hat{\partial}^{2}\varphi(y)d,\ \forall\, d\in\mathcal{R}^{n_{1}}
\end{eqnarray*}
holds with $\partial^{2}\varphi(y)$ being the Clarke generalized Jacobian of $\varphi$ at $y$. We choose $U\in\partial\mbox{Prox}_{\sigma h^{*}}(\sigma(Ay-b)+\tilde{u})$ and $V\in\partial\mbox{Prox}_{\sigma q^{*}}(\sigma Dy+\tilde{v})$, then $\sigma A^{T}UA+\sigma D^{T}VD+\sigma^{-1}H\in\hat{\partial}^{2}\phi(y):=\hat{\partial}^{2}\varphi(y)+\sigma^{-1}H$.

Now we are ready to describe the SsN method and list the details as follows.
\bigskip

\noindent\fbox{\parbox{\textwidth}{\noindent{\bf Algorithm 2 (SsN): \label{alg:Newton-CG1}} Input $\sigma>0,$ $\tilde{y}\in\mathcal{R}^{n_{1}},\tilde{u}\in\mathcal{R}^{n_{2}},\tilde{v}\in\mathcal{R}^{n_{1}}$,
		$\mu\in(0,\frac{1}{2}), \ \bar{\eta}\in(0,1),
		\tau\in(0,1],\textrm{
			and }
		\delta\in(0,1)$. Choose $y^{0}\in\mathcal{R}^{n_{1}}$. Set $j=0$ and iterate:
		\begin{description}
			\item [Step 1.] Let $U^{j}\in\partial\mbox{Prox}_{\sigma h^{*}}(\sigma(Ay^{j}-b)+\tilde{u})$, $V^{j}\in\partial\mbox{Prox}_{\sigma q^{*}}(\sigma Dy^{j}+\tilde{v})$ and $H^{j}=\sigma A^{T}U^{j}A+\sigma D^{T}V^{j}D+\sigma^{-1}H$. Solve the following linear system
			\begin{eqnarray*}
				H^{j}\Delta y=-\nabla\phi(y^{j})
			\end{eqnarray*}
			by a direct method or the preconditioned conjugate gradient method to obtain an approximate solution $\Delta y^{j}$ satisfying the condition below
			\begin{eqnarray}
				\|H^{j}\Delta y^{j}+\nabla\phi(y^{j})\|\leq
				\eta_{j}:=\min(\bar{\eta},\|\nabla\phi(y^{j})\|^{1+\tau}).\label{ineq:stopping
					criterion 1}
			\end{eqnarray}
			\item [Step 2.]  Set $\alpha_{j}=\delta^{m_{j}}$, where $m_{j}$ is the first nonnegative integer $m$ such that
			\begin{eqnarray*}
				&\phi(y^{j}+\delta^{m}\Delta y^{j})\leq
				\phi(y^{j})+\mu\delta^{m}\langle\nabla
				\phi(y^{j}),\Delta y^{j}\rangle.&
			\end{eqnarray*}
			\item [Step 3.]  Set $y^{j+1}=y^{j}+\alpha_{j}\Delta y^{j}$. If a desired stopping criterion is satisfied, terminate; otherwise set $j=j+1$ and go to Step 1.
\end{description}}}

\bigskip

We need to mention that the proximal mappings $\mbox{Prox}_{\sigma^{-1}h}(\cdot)$ and $\mbox{Prox}_{\sigma^{-1}q}(\cdot)$ are strongly semismooth if $q(\cdot)=\|\cdot\|_{1}$ or $\|\cdot\|_{\infty}$ due to Proposition 7.4.7 of \cite{FacchineiPand2003}. So is the proximal mapping of $\|\cdot\|$ base on Theorem 4 of \cite{Meng2005}. It follows easily that the gradient function $\nabla\phi$ is strongly semismooth. This is crucial to guarantee the second order convergence rate of the SsN method. One may see e.g., \cite{LiSunToh2020} for more details. We just give the result below without proof.
\begin{theorem}
	Given $\sigma>0$, $\tilde{y}\in\mathcal{R}^{n_{1}}$, $\tilde{u}\in\mathcal{R}^{n_{2}}$ and $\tilde{v}\in\mathcal{R}^{n_{1}}$, the sequence $\{y^{j}\}$ generated by the SsN method converges to the unique solution $\bar{y}$ of $\nabla\phi(y)=0$ and it holds that
	\begin{eqnarray*}
		\|y^{j+1}-\bar{y}\|=\mathcal{O}(\|y^{j}-\bar{y}\|^{1+\tau}).
	\end{eqnarray*}
\end{theorem}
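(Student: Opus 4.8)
The plan is to run the classical globalized inexact semismooth Newton analysis, adapted to the strongly convex smooth function $\phi$. First I would record the structural facts that make the scheme work. Because $H\succeq\lambda_{\min}I_{n_{1}}$ and $\sigma>0$, the proximal term $\frac{1}{2\sigma}\|y-\tilde{y}\|_{H}^{2}$ renders $\phi$ strongly convex with modulus at least $\sigma^{-1}\lambda_{\min}$; hence $\phi$ has a unique minimizer $\bar{y}$, which by differentiability is the unique root of $\nabla\phi(y)=0$, and every sublevel set $\{y\ |\ \phi(y)\leq\phi(y^{0})\}$ is compact. Next, since $\mbox{Prox}_{\sigma h^{*}}=\Pi_{C_{1}}$ and $\mbox{Prox}_{\sigma q^{*}}=\Pi_{C_{2}}$ are projections onto nonempty closed convex sets, every $U^{j}\in\partial\mbox{Prox}_{\sigma h^{*}}(\sigma(Ay^{j}-b)+\tilde{u})$ and every $V^{j}\in\partial\mbox{Prox}_{\sigma q^{*}}(\sigma Dy^{j}+\tilde{v})$ is self-adjoint and positive semidefinite with spectrum in $[0,1]$, so, with $\Lambda:=\sigma\|A\|^{2}+\sigma\|D\|^{2}+\sigma^{-1}\lambda_{\max}(H)$,
\begin{eqnarray*}
	\sigma^{-1}\lambda_{\min}I_{n_{1}}\preceq H^{j}=\sigma A^{T}U^{j}A+\sigma D^{T}V^{j}D+\sigma^{-1}H\preceq\Lambda I_{n_{1}}.
\end{eqnarray*}
Consequently the Newton system in Step 1 is uniquely solvable and $\{H^{j}\}$ has uniformly bounded eigenvalues and condition number; together with the residual tolerance $\eta_{j}\leq\|\nabla\phi(y^{j})\|^{1+\tau}$ this makes $\Delta y^{j}$ a descent direction for $\phi$, so the Armijo backtracking in Step 2 terminates after finitely many steps.

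Second, I would establish global convergence. The values $\{\phi(y^{j})\}$ form a nonincreasing sequence bounded below, hence convergent, and the Armijo sufficient-decrease inequality then forces $\alpha_{j}|\langle\nabla\phi(y^{j}),\Delta y^{j}\rangle|\to0$. Using the uniform spectral bounds on $H^{j}$ and the standard dichotomy --- either $\liminf_{j}\alpha_{j}>0$, in which case the decrease telescopes, or $\alpha_{j}\to0$ along a subsequence and one contradicts the rejection of the previous trial stepsize via Lipschitz continuity of $\nabla\phi$ on the compact sublevel set --- one deduces $\nabla\phi(y^{j})\to0$. Since $\{y^{j}\}$ stays in the compact sublevel set it has accumulation points, and continuity of $\nabla\phi$ makes each one a root of $\nabla\phi$; uniqueness of $\bar{y}$ then yields $y^{j}\to\bar{y}$.

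Finally, I would prove the local rate near $\bar{y}$. Strong semismoothness of $\mbox{Prox}_{\sigma h^{*}}$ and $\mbox{Prox}_{\sigma q^{*}}$ (Proposition 7.4.7 of \cite{FacchineiPand2003} and Theorem 4 of \cite{Meng2005}) gives that $\nabla\phi$ is strongly semismooth; combined with the uniform nonsingularity of $\hat{\partial}^{2}\phi$ and the forcing term $\eta_{j}=\min(\bar{\eta},\|\nabla\phi(y^{j})\|^{1+\tau})$, the standard inexact semismooth Newton estimate yields, for $y^{j}$ in a neighborhood of $\bar{y}$,
\begin{eqnarray*}
	\|y^{j}+\Delta y^{j}-\bar{y}\|=\mathcal{O}(\|y^{j}-\bar{y}\|^{1+\tau}),
\end{eqnarray*}
where one also uses that $\|\nabla\phi(y^{j})\|$ is comparable to $\|y^{j}-\bar{y}\|$ by strong convexity and smoothness of $\phi$. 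The step I expect to be the main obstacle is showing that the full step $\alpha_{j}=1$ is eventually accepted by the Armijo test: using the superlinear estimate above, $\mu<\frac{1}{2}$, and a semismoothness expansion of $\phi$ along $\Delta y^{j}$, one checks that $\phi(y^{j}+\Delta y^{j})\leq\phi(y^{j})+\mu\langle\nabla\phi(y^{j}),\Delta y^{j}\rangle$ for all large $j$, whence $y^{j+1}=y^{j}+\Delta y^{j}$ eventually and the one-step bound becomes the asserted rate $\|y^{j+1}-\bar{y}\|=\mathcal{O}(\|y^{j}-\bar{y}\|^{1+\tau})$. Full details of this globalized semismooth Newton scheme can be found in \cite{LiSunToh2020}.
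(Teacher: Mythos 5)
The paper states this theorem without proof, deferring to \cite{LiSunToh2020}, and your sketch is precisely the standard globalized inexact semismooth Newton argument that that reference carries out: strong convexity of $\phi$ from the proximal term $\sigma^{-1}H\succeq\sigma^{-1}\lambda_{\min}I_{n_{1}}$, uniform positive definiteness of $H^{j}$ because the generalized Jacobians of the projections are symmetric PSD with spectrum in $[0,1]$, Armijo-based global convergence on the compact sublevel set, and the strong-semismoothness-plus-nonsingularity estimate with eventual acceptance of the unit step giving the order $1+\tau$. The one loose point is your claim that the residual tolerance $\eta_{j}$ by itself makes $\Delta y^{j}$ a descent direction --- for moderate $\|\nabla\phi(y^{j})\|$ the bound $\eta_{j}\leq\min(\bar{\eta},\|\nabla\phi(y^{j})\|^{1+\tau})$ does not force $\eta_{j}$ below $\kappa(H^{j})^{-1}\|\nabla\phi(y^{j})\|$, so one should instead invoke the fact that a direct solve (or any conjugate gradient iterate initialized at zero) of the positive definite system automatically satisfies $\langle\nabla\phi(y^{j}),\Delta y^{j}\rangle<0$.
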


\subsection{The computational details of $\hat{\partial}^{2}\varphi(\cdot)$}
In this subsection, we discuss how to compute $\hat{\partial}^{2}\varphi(\cdot)$. In the implementation of the SsN method, for a given $\sigma>0$, we need to compute the generalized Jacobian matrices of the proximal mappings $\mbox{Prox}_{\sigma h^{*}}(u)$ and $\mbox{Prox}_{\sigma q^{*}}(v)$, respectively.

Each element $\Sigma$ of $\partial\mbox{Prox}_{\sigma h^{*}}(u)$ is a diagonal matrix with
\begin{eqnarray*}
	\Sigma_{ii}\in\left\{\begin{array}{ll}\{1\},&\mbox{if}\ u_{i}>0,\\
		\left[0,1\right],&\mbox{if}\ u_{i}=0,\\
		\{0\},&\mbox{otherwise}.\end{array}\right.
\end{eqnarray*}
As for the set $\partial\mbox{Prox}_{\sigma q^{*}}(v)$, we discuss it for different $q$.
\begin{enumerate}
	\item $q(\cdot)=\|\cdot\|_{1}$ or $q(\cdot)=\|\cdot\|_{\infty}$
	
	As is known from Section \ref{sec:preliminaries}, the function $\mbox{Prox}_{\sigma q^{*}}(v)$ is a metric projection function of a polyhedral convex set $C_{2}=\{v\ |\ \bar{A}v-\bar{b}\leq 0\}$. From \cite{Han1997,LiSunToh2020b}, we know that for any $v\in\mathcal{R}^{n_{1}}$
	\begin{eqnarray}\label{generalized-Jacobian}
		I_{n}-\bar{A}_{J}^{T}\left(\bar{A}_{J}\bar{A}_{J}^{T}\right)^{\dag}\bar{A}_{J}\in\partial\mbox{Prox}_{\sigma q^{*}}(v)
	\end{eqnarray}
	with $J:=\{j\ |\ \bar{A}_{j}v=\bar{b}_{j}\}$ and $(\cdot)^{\dag}$ being the Moore-Penrose pseudo-inverse.
	
	For more details, if $q(\cdot)=\|\cdot\|_{1}$ and $\Sigma\in\partial\mbox{Prox}_{\sigma q^{*}}(v)$, then $\Sigma$ is a diagonal matrix with
	\begin{eqnarray*}
		\Sigma_{ii}\in\left\{\begin{array}{ll}\{1\},&\mbox{if}\ |v_{i}|<1,\\
			\left[0,1\right],&\mbox{if}\ |v_{i}|=1,\\
			\{0\},&\mbox{otherwise}.\end{array}\right.
	\end{eqnarray*}
	If $q({\cdot})=\|\cdot\|_{\infty}$, then we can choose $V\in\partial\mbox{Prox}_{\sigma q^{*}}(v)$ such that
	\begin{eqnarray*}
		V=\left\{\begin{array}{ll}I_{N},&\mbox{if}\ \|v\|_{1}\leq 1,\\
			P_{v}\widetilde{V}P_{v},&\mbox{otherwise}\end{array}\right.
	\end{eqnarray*}
	with $\widetilde{V}=\textrm{diag}(s)-\frac{ss^{T}}{s^{T}s}\in\partial\Pi_{\triangle}(v)$ and $s_{i}=1$ if $(\Pi_{\triangle}(P_{v}v))_{i}\neq0$, otherwise $s_{i}=0$.
	
	\item $q(\cdot)=\|\cdot\|$
	
	We have
	\begin{eqnarray*}
		\partial\mbox{Prox}_{\sigma q^{*}}(v)&=&\left\{\begin{array}{ll}\{I_{n_{1}}\},&\quad\mbox{if}\quad\|v\|<1,\\
			\left\{I_{n_{1}}-\frac{tvv^{T}}{\|v\|^{2}},\ 0\leq t\leq 1\right\},&\quad\mbox{if}\quad\|v\|=1,\\
			\left\{\frac{1}{\|v\|}I_{n_{1}}-\frac{vv^{T}}{\|v\|^{3}}\right\},&\quad\mbox{if}\quad\|v\|>1.\end{array}\right.
	\end{eqnarray*}
\end{enumerate}

\subsection{DCGM}
In this subsection, we discuss how to deal with the large scale $O(n^{3})$ constraints. Since the number of the rows is so large that it is impossible to generate and store the entire matrix $A$. We apply DCGM to improve the efficiency of the implementation. The basic fact for the primal problem is that most of the constraints are not active at the solution point and therefore we can afford not to include those inactive constraint rows. Instead of dealing with all the constraints, the idea of DCGM is that we only consider a subset of the constraints and solve a reduced problem at each iteration.

We present DCGM below. One can see \cite{Bertsimas1997} for more details.

\bigskip

\noindent\fbox{\parbox{\textwidth}{\noindent{\bf Algorithm 3 (DCGM$\_$PALM): \label{dcgm-palm}} Choose $S$ as a subset of $\{1,2,\ldots,n_{2}\}$ with at least one element of $b_{S}$ is negative, where $b_{S}$ is the vector derived by $b$ with rows indexed by $S$. Iterate
		\begin{description}
			\item [Step 1.] Find an approximate solution $y^{*}$ of the following reduced problem
			\begin{eqnarray}\label{dcgm-palm-subproblem}
				\min_{y\in\mathcal{R}^{n_{1}}}\Big\{h(A_{S}y-b_{S})+q(Dy)\Big\}
			\end{eqnarray}
			by applying Algorithm 1, where $A_{S}$ denotes the matrix derived by $A$ with rows indexed by $S$.
			\item [Step 2.]  Check the feasibility of the remaining constraints and denote
			\begin{eqnarray*}
				S'=\Big\{j\, \Big|\, A_{j}y^{*}-b_{j}>0,\ j\in \{1,2,\ldots,n_{2}\}\setminus S\Big\}.
			\end{eqnarray*}
			\item [Step 3.]  If $S'={\O}$, terminate; otherwise update the set $S$ and go to Step 1.
\end{description}}}

\bigskip
\begin{remark}
	In the above algorithm, we add the first $\min\{|S|,|S'|\}$ biggest values of violated constraints into the constraint set $S$. As mentioned in \cite{Bertsimas1997}, we also take the idea of dropping some of the elements of $S$, i.e., we drop those constraints that are not active. In our implementation, we use the zero vector as the initial iteration to generate an initial subset $S$. We remove the constraints corresponding to the smallest $50\%$ values of $A_{j}y^{*}-b_{j}$ if $|S'|>|S|$ and $|S|>n(n-1)/4$. Due to the aforementioned strategy, the number of the elements in the constraint set $S$ increases during the iteration. Since the number of the total constraints is finite, DCGM can be terminated in a finite number of iterations.
\end{remark}

\section{Theoretical results}\label{sec:CQ-nondegeneracy-invertible}
Let $(\bar{x},\bar{u},\bar{v})$ be the solution of the KKT system (\ref{composite-problem-kkt}). In this section, we introduce some theoretical results on the SRCQ and the nondegeneracy condition of the dual problem at $(\bar{u},\bar{v})$, and the nonsingularity of $\hat{\partial}^{2}\varphi(\bar{y})$.

\subsection{The nondegeneracy condition and the nonsingularity of $\hat{\partial}^{2}\varphi(\bar{y})$}
In this subsection, we give a result about the equivalence between the dual nondegeneracy condition at $(\bar{u},\bar{v})$ and the nonsingularity of $\hat{\partial}^{2}\varphi(\bar{y})$. It is known that the nondegeneracy condition of the dual problem (\ref{dual-problem}) at $(\bar{u},\bar{v})$ takes the following form
\begin{eqnarray}\label{dual-nondegeneracy-cond}
	A^{T}\mbox{lin}\mathcal{T}_{C_{1}}(\bar{u})+D^{T}\mbox{lin}\mathcal{T}_{C_{2}}(\bar{v})=\mathcal{R}^{n_{1}}.
\end{eqnarray}
We present the theorem below.
\begin{theorem}
	Let $(\bar{y},\bar{u},\bar{v})$ be the solution of the KKT system. Then the nondegeneracy condition (\ref{dual-nondegeneracy-cond}) of the dual problem at $(\bar{u},\bar{v})$ holds if and only if $\hat{\partial}^{2}\varphi(\bar{y})$ is nonsingular.
\end{theorem}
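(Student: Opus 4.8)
The plan is to push the nondegeneracy condition (\ref{dual-nondegeneracy-cond}) through the two proximal mappings that build $\hat{\partial}^{2}\varphi$ and reduce the statement to an elementary fact about orthogonal complements of subspaces; throughout I read ``$\hat{\partial}^{2}\varphi(\bar{y})$ is nonsingular'' in the usual sense, that \emph{every} element of this set is nonsingular, and the proximal anchor $(\tilde{u},\tilde{v})$ in $\varphi=\mathcal{L}_{\sigma}(\cdot;\tilde{u},\tilde{v})$ is taken to be the KKT pair $(\bar{u},\bar{v})$. Set $z_{1}:=\sigma(A\bar{y}-b)+\bar{u}$ and $z_{2}:=\sigma D\bar{y}+\bar{v}$; since $h^{*}=\delta_{C_{1}}$ and $q^{*}=\delta_{C_{2}}$ we have $\mbox{Prox}_{\sigma h^{*}}=\Pi_{C_{1}}$, $\mbox{Prox}_{\sigma q^{*}}=\Pi_{C_{2}}$ and $\hat{\partial}^{2}\varphi(\bar{y})=\sigma A^{T}\partial\Pi_{C_{1}}(z_{1})A+\sigma D^{T}\partial\Pi_{C_{2}}(z_{2})D$. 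The first step is to locate $z_{1},z_{2}$: from the optimality characterization of the proximal mapping together with the KKT relations $A\bar{y}-b\in\partial h^{*}(\bar{u})$ and $D\bar{y}\in\partial q^{*}(\bar{v})$ in (\ref{composite-problem-kkt}) one checks $\Pi_{C_{1}}(z_{1})=\bar{u}$ and $\Pi_{C_{2}}(z_{2})=\bar{v}$. Complementarity for the orthant $C_{1}=\mathcal{R}^{n_{2}}_{+}$ then forces $(z_{1})_{i}=\bar{u}_{i}>0$ when $\bar{u}_{i}>0$ and $(z_{1})_{i}\le 0$ when $\bar{u}_{i}=0$, and $z_{2}-\bar{v}\in\sigma\mathcal{N}_{C_{2}}(\bar{v})$ ties the active set of $C_{2}$ at $z_{2}$ to that of $\bar{v}$.

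Next I would reduce the nonsingularity to a pair of kernel conditions. Every $U\in\partial\Pi_{C_{1}}(z_{1})$ and every $V\in\partial\Pi_{C_{2}}(z_{2})$ is symmetric positive semidefinite, so $\sigma A^{T}UA+\sigma D^{T}VD\succeq0$, and since
\[
d^{T}\big(\sigma A^{T}UA+\sigma D^{T}VD\big)d=\sigma\langle Ad,UAd\rangle+\sigma\langle Dd,VDd\rangle
\]
is a sum of two nonnegative terms, this matrix is singular if and only if there is $d\ne0$ with $UAd=0$ and $VDd=0$. As $U$ and $V$ vary independently, $\hat{\partial}^{2}\varphi(\bar{y})$ is nonsingular if and only if no nonzero $d$ lies in both of
\[
\mathcal{K}_{1}:=\{d\mid\exists\,U\in\partial\Pi_{C_{1}}(z_{1}),\ UAd=0\},\qquad\mathcal{K}_{2}:=\{d\mid\exists\,V\in\partial\Pi_{C_{2}}(z_{2}),\ VDd=0\}.
\]

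The crux is then the identification
\[
\mathcal{K}_{1}=\big(A^{T}\mbox{lin}\,\mathcal{T}_{C_{1}}(\bar{u})\big)^{\perp},\qquad\mathcal{K}_{2}=\big(D^{T}\mbox{lin}\,\mathcal{T}_{C_{2}}(\bar{v})\big)^{\perp},
\]
obtained from the explicit descriptions of $\partial\Pi_{C_{1}}$, $\partial\Pi_{C_{2}}$, $\mbox{lin}\,\mathcal{T}_{C_{1}}(\bar{u})$ and $\mbox{lin}\,\mathcal{T}_{C_{2}}(\bar{v})$ collected in Section~\ref{sec:preliminaries}. For $\mathcal{K}_{1}$ this is immediate: an element of $\partial\Pi_{C_{1}}(z_{1})$ is diagonal with entry $1$ on $\{i:(z_{1})_{i}>0\}=\{i:\bar{u}_{i}>0\}$ and a freely choosable entry elsewhere, so there exists $U$ with $UAd=0$ exactly when $(Ad)_{i}=0$ for all $i$ with $\bar{u}_{i}>0$, i.e.\ when $Ad\perp\mbox{lin}\,\mathcal{T}_{C_{1}}(\bar{u})=\{w\mid w_{i}=0\text{ if }\bar{u}_{i}=0\}$. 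Granting both identities and combining with the reduction, $\hat{\partial}^{2}\varphi(\bar{y})$ is nonsingular iff $\big(A^{T}\mbox{lin}\,\mathcal{T}_{C_{1}}(\bar{u})\big)^{\perp}\cap\big(D^{T}\mbox{lin}\,\mathcal{T}_{C_{2}}(\bar{v})\big)^{\perp}=\{0\}$, iff $A^{T}\mbox{lin}\,\mathcal{T}_{C_{1}}(\bar{u})+D^{T}\mbox{lin}\,\mathcal{T}_{C_{2}}(\bar{v})=\mathcal{R}^{n_{1}}$ (for subspaces $U_{1}+U_{2}=\mathcal{R}^{n_{1}}$ iff $U_{1}^{\perp}\cap U_{2}^{\perp}=\{0\}$), which is exactly (\ref{dual-nondegeneracy-cond}).

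I expect the real work to be the identity $\mathcal{K}_{2}=\big(D^{T}\mbox{lin}\,\mathcal{T}_{C_{2}}(\bar{v})\big)^{\perp}$ in the $\ell_{1}/\ell_{\infty}$ case, which needs care with the generalized Jacobian of the projection onto the polyhedron $C_{2}=\{v\mid\bar{A}v\le\bar{b}\}$. Writing $z_{2}-\bar{v}=\bar{A}^{T}\mu$ with $\mu\ge0$ supported on $J_{2}:=\{j\mid\bar{A}_{j}\bar{v}=\bar{b}_{j}\}$ (this is the relation $z_{2}-\bar{v}\in\sigma\mathcal{N}_{C_{2}}(\bar{v})$ from the first step), the key point is that every element of $\partial\Pi_{C_{2}}(z_{2})$ is a convex combination of projectors $I-\bar{A}_{J}^{T}(\bar{A}_{J}\bar{A}_{J}^{T})^{\dag}\bar{A}_{J}$ with $\mbox{supp}(\mu)\subseteq J\subseteq J_{2}$; hence $\{w\mid Vw=0\}\subseteq\mbox{span}\{\bar{A}_{j}^{T}\mid j\in J_{2}\}=\big(\mbox{lin}\,\mathcal{T}_{C_{2}}(\bar{v})\big)^{\perp}$ for every $V$, so that $\mathcal{K}_{2}\subseteq\big(D^{T}\mbox{lin}\,\mathcal{T}_{C_{2}}(\bar{v})\big)^{\perp}$, while the admissible choice $J=J_{2}$ in (\ref{generalized-Jacobian}) yields the orthogonal projector $\Pi_{L}$ onto $L:=\mbox{lin}\,\mathcal{T}_{C_{2}}(\bar{v})$, for which $VDd=0\Leftrightarrow Dd\perp L$, giving the reverse inclusion. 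The $\ell_{2}$ case $C_{2}=\{v\mid\|v\|\le1\}$ is handled separately, but there the closed forms of $\partial\Pi_{C_{2}}$ and $\mbox{lin}\,\mathcal{T}_{C_{2}}(\bar{v})$ in Section~\ref{sec:preliminaries} make $\mathcal{K}_{2}=\big(D^{T}\mbox{lin}\,\mathcal{T}_{C_{2}}(\bar{v})\big)^{\perp}$ transparent; everything else is routine manipulation of subspaces.
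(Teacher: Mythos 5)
Your proposal is correct and follows essentially the same route as the paper: both reduce the nondegeneracy condition (\ref{dual-nondegeneracy-cond}) to the triviality of $\bigl(A^{T}\mbox{lin}\,\mathcal{T}_{C_{1}}(\bar{u})\bigr)^{\perp}\cap\bigl(D^{T}\mbox{lin}\,\mathcal{T}_{C_{2}}(\bar{v})\bigr)^{\perp}$ and then identify the kernels of the two blocks of $\hat{\partial}^{2}\varphi(\bar{y})$ with these orthogonal complements using the explicit generalized Jacobians of the projections onto $C_{1}$ and $C_{2}$ (treating the $\ell_{2}$ case separately, as the paper does). The only difference is one of bookkeeping: you argue uniformly over all elements of $\partial\Pi_{C_{2}}(z_{2})$ via the kernel-containment observation, whereas the paper works with the single representative $I_{n_{1}}-\bar{A}_{J_{2}}^{T}[\bar{A}_{J_{2}}\bar{A}_{J_{2}}^{T}]^{\dag}\bar{A}_{J_{2}}$ from (\ref{generalized-Jacobian}); this makes the quantifier ``every element is nonsingular'' slightly more explicit but does not change the substance of the argument.
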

\begin{proof}
	The nondegeneracy condition (\ref{dual-nondegeneracy-cond}) is equivalent to
	\begin{eqnarray*}
		\Big\{A^{T}\mbox{lin}\mathcal{T}_{C_{1}}(\bar{u})+D^{T}\mbox{lin}\mathcal{T}_{C_{2}}(\bar{v})\Big\}^{\perp}=\{0\},
	\end{eqnarray*}
	which is also equivalent to
	\begin{eqnarray*}
		\Big\{A^{T}\mbox{lin}\mathcal{T}_{C_{1}}(\bar{u})\Big\}^{\perp}\cap\Big\{D^{T}\mbox{lin}\mathcal{T}_{C_{2}}(\bar{v})\Big\}^{\perp}=\{0\}.
	\end{eqnarray*}
	Therefore, we have
	\begin{eqnarray*}
		\forall\, w^{1}\in\mbox{lin}\mathcal{T}_{C_{1}}(\bar{u}),\, w^{2}\in\mbox{lin}\mathcal{T}_{C_{2}}(\bar{v}),\,d\in\mathcal{R}^{n_{1}},\, \langle d,A^{T}w^{1}\rangle=0,\ \langle d,D^{T}w^{2}\rangle=0\\
		\Rightarrow\quad d=0,\qquad\qquad
	\end{eqnarray*}
	which can be written equivalently as
	\begin{eqnarray*}
		\forall\, w^{1}\in\mbox{lin}\mathcal{T}_{C_{1}}(\bar{u}),\ w^{2}\in\mbox{lin}\mathcal{T}_{C_{2}}(\bar{v}),\,\,d\in\mathcal{R}^{n_{1}},\, \langle Ad,w^{1}\rangle=0,\, \langle Dd,w^{2}\rangle=0\\
		\Rightarrow\quad d=0.\qquad\qquad
	\end{eqnarray*}
	
	According to the structure of $q$, we separate our proof into two parts.
	\begin{enumerate}
		\item $q(\cdot)=\|\cdot\|_{1}$ or $q(\cdot)=\|\cdot\|_{\infty}$.
		
		Let $J_{1} = \{j\, |\, \bar{u}_{j}>0\}$, $J_{2}=\{j\, |\, \bar{A}_{j}\bar{v}=\bar{b}_{j}\}$, $A_{J_{1}}$ and $\bar{A}_{J_{2}}$ are the matrices  derived from $A$ and $\bar{A}$, respectively. Since $A\bar{y}-b\in\partial h^{*}(\bar{u})$, we have $A_{J_{1}}\bar{y}-b_{J_{1}}=0$.
		Based on the structure of $\mathcal{T}_{C_{1}}(\bar{u})$ and $\mathcal{T}_{C_{2}}(\bar{v})$, the nondegeneracy condition (\ref{dual-nondegeneracy-cond}) is equivalent to
		\begin{eqnarray}\label{cond-nondegeneracy-equivalent}
			A_{J_{1}}d=0,\quad Dd\in\mbox{span}\Big\{\bar{A}_{j}^{T},\ j\in J_{2}\Big\}\quad \Rightarrow\quad d=0.
		\end{eqnarray}
		
		Furthermore, based on the Moreau identity we have
		\begin{eqnarray*}
			\mbox{Prox}_{\sigma^{-1}q}(D\bar{y}+\sigma^{-1}\bar{v})=D\bar{y}+\sigma^{-1}\bar{v}-\Pi_{\sigma^{-1}C_{2}}(D\bar{y}+\sigma^{-1}\bar{v}).
		\end{eqnarray*}
		By (\ref{generalized-Jacobian}), we have $I_{n_{1}}-\bar{A}_{J_{2}}^{T}[\bar{A}_{J_{2}}\bar{A}_{J_{2}}^{T}]^{\dag}\bar{A}_{J_{2}}\in\partial\mbox{Prox}_{\sigma q^{*}}(\sigma D\bar{y}+\bar{v})$.
		For any $\overline{U}\in\partial\mbox{Prox}_{\sigma h^{*}}(\sigma(A\bar{y}-b)+\bar{u})$, let $d\in\mathcal{R}^{n_{1}}$ such that
		\begin{eqnarray*}
			d^{T}A^{T}\overline{U}Ad+d^{T}D^{T}(I_{n_{1}}-\bar{A}_{J_{2}}^{T}[\bar{A}_{J_{2}}\bar{A}_{J_{2}}^{T}]^{\dag}\bar{A}_{J_{2}})Dd=0.
		\end{eqnarray*}
		By the formulation of $\partial\mbox{Prox}_{\sigma h^{*}}(\sigma(A\bar{y}-b)+\bar{u})$, we have
		\begin{eqnarray}\label{cond-AJ}
			A_{J_{1}}d=0.
		\end{eqnarray}
		For any vector $w\in\mathcal{R}^{n_{1}}$, it is known from the singular value decomposition of $\bar{A}_{J_{2}}$ and the Moore-Penrose pseudo-inverse of $\bar{A}_{J_{2}}\bar{A}_{J_{2}}^{T}$ that $\Pi_{\mbox{Ran}(\bar{A}_{J_{2}}^{T})}(w)=\bar{A}_{J_{2}}^{T}[\bar{A}_{J_{2}}\bar{A}_{J_{2}}^{T}]^{\dag}\bar{A}_{J_{2}}w$ and $(I_{n_{1}}-\bar{A}_{J_{2}}^{T}[\bar{A}_{J_{2}}\bar{A}_{J_{2}}^{T}]^{\dag}\bar{A}_{J_{2}})w\in\mbox{Ran}(\bar{A}_{J_{2}}^{T})^{\perp}=\mbox{Null}(\bar{A}_{J_{2}})$. There exist a unique $w^{1}\in\mbox{Ran}(\hat{A}_{J_{2}}^{T})$ and $w^{2}\in\mbox{Null}(\bar{A}_{J_{2}})$ such that $w = w^{1}+w^{2}$ with $\langle w^{1},w^{2}\rangle=0$. It follows that $\langle w,w^{2}\rangle=0$ is equivalent to $w\in\mbox{Ran}({\bar{A}_{J_{2}}^{T}})$. Then we have the conclusion that the nonsingularity of $\hat{\partial}^{2}\varphi(\bar{y})$ is equivalent to
		\begin{eqnarray*}
			A_{J_{1}}d=0,\quad Dd\in\mbox{Ran}({\bar{A}_{J_{2}}^{T}})\quad\Rightarrow\quad d=0,
		\end{eqnarray*}
		which is the same as that in (\ref{cond-nondegeneracy-equivalent}). The desired result follows.
		\item $q(\cdot)=\|\cdot\|$.
		
		Firstly, we know that $\|\bar{v}\|\leq 1$ due to $\bar{v}\in\partial q(D\bar{y})$.
		If $\|\bar{v}\|<1$, then the nondegeneracy condition (\ref{dual-nondegeneracy-cond}) is equivalent to
		\begin{eqnarray*}
			A_{J_{1}}d=0,\quad Dd=0\quad\Rightarrow\quad d=0,
		\end{eqnarray*}
		which is natural since $D$ is nonsingular. Then the Hessian of $\varphi$ at $\bar{y}$ is $\sigma A_{J_{1}}^{T}A_{J_{1}}+\sigma D^{T}D$, which is nonsingular.
		
		If $\|\bar{v}\|=1$, the nondegeneracy condition (\ref{dual-nondegeneracy-cond}) holds if and only if
		\begin{eqnarray}\label{dual-nondegeneracy-cond-equivalent-l2}
			A_{J_{1}}d=0,\quad Dd//\bar{v}\quad\Rightarrow\quad d=0.
		\end{eqnarray}
		Note that $D\bar{y}=\lambda\bar{v}$, $\lambda>0$ due to the third equation of the KKT condition \eqref{composite-problem-kkt} and Assumption \ref{assumption}.  Hence, for any $t\in[0,1]$,
		\begin{eqnarray*}
			\overline{V}&:=&\frac{1}{\|\sigma D\bar{y}+\bar{v}\|}I_{n_{1}}-\frac{(t\sigma D\bar{y}+\bar{v})(D\bar{y}+\sigma^{-1}\bar{v})^{T}}{\|\sigma D\bar{y}+\bar{v}\|^{3}}\\
			&=&\frac{1}{\|\sigma D\bar{y}+\bar{v}\|}I_{n_{1}}-\frac{t\bar{v}\bar{v}^{T}}{\|\sigma D\bar{y}+\bar{v}\|}\in\partial\mbox{Prox}_{\sigma q^{*}}(\sigma D\bar{y}+\bar{v}).
		\end{eqnarray*}
		For any $\overline{U}\in\partial\mbox{Prox}_{\sigma h^{*}}(\sigma(A\bar{y}-b)+\bar{u})$, we can write an element in $\hat{\partial}^{2}\varphi(\bar{y})$ as
		\begin{eqnarray}\label{hessian-l2}
			\sigma A^{T}\overline{U}A+\sigma D^{T}\overline{V}D=\sigma A^{T}\overline{U}A+\frac{1}{\|D\bar{y}+\sigma^{-1}\bar{v}\|} D^{T}\left(I_{n_{1}}-t\bar{v}\bar{v}^{T}\right)D.
		\end{eqnarray}
		Combining (\ref{cond-AJ}), the matrix (\ref{hessian-l2}) is nonsingular if and only if
		\begin{eqnarray*}
			\forall\ t\in[0,1],\ A_{J_{1}}d=0,\ \mbox\ d^{T}D^{T}(I_{n_{1}}-t\bar{v}\bar{v}^{T})Dd=0\quad\Rightarrow\quad d=0,
		\end{eqnarray*}
		which implies the result (\ref{dual-nondegeneracy-cond-equivalent-l2}).
	\end{enumerate}
	This completes the proof.
\end{proof}

\begin{remark}\label{remk4}
	One point we need to mention is that the nondegeneracy condition (\ref{dual-nondegeneracy-cond}) holds naturally in the case $q(\cdot)=\|\cdot\|$ under Assumption \ref{assumption}. It is obvious that $\|\bar{v}\|=1$ and $\bar{v}=\frac{D\bar{y}}{\|D\bar{y}\|}$ due to the condition $D\bar{y}\in\partial q^{*}(\bar{v})$. From the above proof, we know that the nodegeneracy condition (\ref{dual-nondegeneracy-cond}) is equivalent to
	\begin{eqnarray*}
		A_{J_{1}}d=0,\quad Dd // \bar{v}\quad\Rightarrow\quad d=0.
	\end{eqnarray*}
	Therefore, if the left-hand side of the above condition holds, there exists $\gamma\in\mathcal{R}$ such that $d=\gamma\bar{y}$ and $0=\langle Ad,\bar{u}\rangle=\gamma\langle A\bar{y},\bar{u}\rangle$. Since
	\begin{eqnarray*}
		\langle A\bar{y},\bar{u}\rangle=\langle\bar{y},A^{T}\bar{u}\rangle=-\langle\bar{y},D^{T}\bar{v}\rangle=-\left\langle D\bar{y},\frac{D\bar{y}}{\|D\bar{y}\|}\right\rangle=-\|D\bar{y}\|\neq0,
	\end{eqnarray*}
	we obtain $\gamma=0$ and the Hessian of $\varphi$ at $\bar{y}$ is nonsingular under Assumption \ref{assumption} if $q(\cdot)=\|\cdot\|$.
\end{remark}

\subsection{The equivalence between the SRCQ and the nondegeneracy condition when $q(\cdot)=\|\cdot\|_{1}$ or $q(\cdot)=\|\cdot\|_{\infty}$}
As is known from the previous subsection, we should give a good description about the dual nondegeneracy condition when $q(\cdot)=\|\cdot\|_{1}$ or $q(\cdot)=\|\cdot\|_{\infty}$. In this subsection, we consider the equivalence between the SRCQ of the dual problem (\ref{dual-problem}), which has the following form
\begin{align}\label{dual-strict-robinson-qualification}
	A^{T}\Big\{d\,\Big|\,h^{*\downarrow}_{-}(\bar{u},d)=\langle A\bar{y}-b,d\rangle\Big\}+D^{T}\Big\{d\,\Big|\,q^{*\downarrow}_{-}(\bar{v},d)=\langle D\bar{y},d\rangle\Big\}=\mathcal{R}^{n_{1}},
\end{align}
and the nondegeneracy condition (\ref{dual-nondegeneracy-cond}) in the case of $q(\cdot)=\|\cdot\|_{1}$ or $\|\cdot\|_{\infty}$. We first present some basic results for further use.
\begin{proposition}\label{prop-equivalent-h-p}
	For $h(\cdot) = \delta_{C}(\cdot)$, $q(\cdot)=\|\cdot\|_{1}$ or $q(\cdot)=\|\cdot\|_{\infty}$,
	we have
	\begin{eqnarray}
		&&\Big\{d\, \Big|\, h^{*\downarrow}_{-}(\bar{u},d)=\langle A\bar{y}-b,d\rangle\Big\}^{\circ}=\Big\{d\, \Big|\, h^{\downarrow}_{-}(A\bar{y}-b,d)=\langle\bar{u},d\rangle\Big\},\label{polar-p*-equivalent-a}\\
		&&\Big\{d\, \Big|\, q^{*\downarrow}_{-}(\bar{v},d)=\langle D\bar{y},d\rangle\Big\}^{\circ}=\Big\{d\, \Big|\, q^{\downarrow}_{-}(D\bar{y},d)=\langle\bar{v},d\rangle\Big\}.\label{polar-p*-equivalent-b}
	\end{eqnarray}
\end{proposition}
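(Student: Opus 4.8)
The goal is to establish the two polarity identities \eqref{polar-p*-equivalent-a} and \eqref{polar-p*-equivalent-b}, each relating the polar of the critical-type cone for a conjugate function to the critical-type cone for the original function, evaluated at the appropriate KKT points. The plan is to treat both identities as special cases of a single general principle: for a closed proper convex function $f$ and a dual pair $(\bar z, \bar w)$ with $\bar w \in \partial f(\bar z)$ (equivalently $\bar z \in \partial f^{*}(\bar w)$), the sets $\{d \mid f^{*\downarrow}_{-}(\bar w, d) = \langle \bar z, d\rangle\}$ and $\{d \mid f^{\downarrow}_{-}(\bar z, d) = \langle \bar w, d\rangle\}$ are mutually polar cones. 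For \eqref{polar-p*-equivalent-a} apply this with $f = h = \delta_C$, $\bar z = A\bar y - b$, $\bar w = \bar u$ (note $h^{*} = \delta_{C_1}$, and by the KKT system $A\bar y - b \in \partial h^{*}(\bar u)$); for \eqref{polar-p*-equivalent-b} apply it with $f = q$, $\bar z = D\bar y$, $\bar w = \bar v$ (here $D\bar y \in \partial q^{*}(\bar v)$).

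The key observation making this work is that for a convex function $g$ and a point $x$ with $g(x)$ finite, the lower directional epiderivative $g^{\downarrow}_{-}(x;\cdot)$ is the support function of $\partial g(x)$, i.e. $g^{\downarrow}_{-}(x;d) = \sup_{w \in \partial g(x)} \langle w, d\rangle$ (since $g$ is convex and finite at $x$ the epiderivative exists, is convex and positively homogeneous, and equals the ordinary directional derivative $g'(x;\cdot)$, whose closure is the support function of the subdifferential). Hence the set $\{d \mid g^{\downarrow}_{-}(x;d) = \langle \bar w, d\rangle\}$, for a fixed $\bar w \in \partial g(x)$, is exactly $\{d \mid \langle w - \bar w, d\rangle \le 0 \ \forall w \in \partial g(x)\}$, which is precisely the polar of the cone generated by $\partial g(x) - \bar w$, i.e. the normal cone $\mathcal N_{\partial g(x)}(\bar w)^{\circ}$-type object — equivalently the tangent-cone/critical-cone of $\partial g(x)$ at $\bar w$. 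Applying this once with $(g,x,\bar w) = (f, \bar z, \bar w)$ and once with $(g,x,\bar w) = (f^{*}, \bar w, \bar z)$, and using $\partial f(\bar z) \ni \bar w \iff \partial f^{*}(\bar w) \ni \bar z$, both sides of the claimed identity become the polar pair associated with the cone generated by $\partial f(\bar z) - \bar w$, so each is the polar of the other. Concretely: $\{d \mid f^{*\downarrow}_{-}(\bar w,d) = \langle \bar z, d\rangle\} = \mathcal T_{\partial f^{*}(\bar w)}(\bar z)$ and $\{d \mid f^{\downarrow}_{-}(\bar z,d) = \langle \bar w, d\rangle\} = \mathcal T_{\partial f(\bar z)}(\bar w)$, and since $\partial f(\bar z)$ and $\partial f^{*}(\bar w)$ are polyhedral (for $f = \delta_C$ with $C$ polyhedral, and for $f = \|\cdot\|_{1}$ or $\|\cdot\|_{\infty}$), these tangent cones of the polyhedral subdifferentials at mutually-paired points are polar to one another by the standard conjugacy-of-critical-cones fact (e.g. via Proposition~\ref{prop-Tangent-Normal-Cone} together with $\bar z \in \mathcal N_{\partial f(\bar z)}(\bar w)$).

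I would organize the write-up as: (i) recall that for convex $f$ finite at $\bar z$, $f^{\downarrow}_{-}(\bar z;\cdot) = \sigma_{\partial f(\bar z)}(\cdot)$, the support function of the subdifferential, so $\{d \mid f^{\downarrow}_{-}(\bar z;d) = \langle \bar w, d\rangle\}$ with $\bar w \in \partial f(\bar z)$ coincides with the critical cone of the polyhedral set $\partial f(\bar z)$ at $\bar w$; (ii) do the same for $f^{*}$ at $\bar w$, using the conjugate subdifferential inversion $\bar z \in \partial f^{*}(\bar w)$; (iii) invoke the classical fact that for a polyhedral convex set $P$ and $\bar w \in P$, $\bar z \in \mathcal N_P(\bar w)$, the critical cone $\{d : \langle w - \bar w, d\rangle \le 0\ \forall w\in P\}$ and the analogous cone with roles of $P,\mathcal N_P(\bar w)$ swapped are mutually polar (this is where I would cite Proposition~\ref{prop-Tangent-Normal-Cone} or the standard result on polarity of critical cones in polyhedral programming). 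The main obstacle I anticipate is purely bookkeeping: carefully matching the epiderivative formalism used in the paper (the $h^{\downarrow}_{-}$ notation, and the fact that these functions are Lipschitz near the relevant points so $h^{\downarrow}_{-} = h'_{-}$ in the $\|\cdot\|_{1},\|\cdot\|_{\infty}$ case, while for $\delta_C$ one uses the tangent-cone description $h^{*\downarrow}_{-}(\bar u;d) = \delta_{\mathcal T_{C_1}(\bar u)}(d)$ already recorded in Section~\ref{sec:preliminaries}) to the support-function / polar-cone language, and verifying the polyhedrality needed to get equality (not merely inclusion) of the polars without any closure operation. Everything else is a direct appeal to convex duality and the polyhedral normal/tangent cone formulas already assembled in the Preliminaries.
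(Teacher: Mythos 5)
Your strategy is sound and, at its core, coincides with the paper's: the key mechanism in both is that the lower directional epiderivative of a convex function is the support function of its subdifferential (the paper uses exactly this in its display for $(\delta^{*}_{C_{2}})^{\downarrow}_{-}(D\bar{y},d)$), which turns each of the four sets into a cone of the form $\{d \mid \langle w-\bar w,d\rangle\le 0,\ \forall w\in\partial g(\cdot)\}$, i.e.\ a normal cone of a polyhedral subdifferential. Where you diverge is in how the resulting polarity is verified: the paper proves only \eqref{polar-p*-equivalent-b} (asserting \eqref{polar-p*-equivalent-a} is easy) and grinds through the representation $C_{2}=\{v\mid \bar Av\le\bar b\}$ with active index sets and multipliers $\beta,\widetilde\beta$ to show directly that $\{d\mid q^{\downarrow}_{-}(D\bar y,d)=\langle\bar v,d\rangle\}=\mathrm{span}(D\bar y)+\mathcal N_{C_{2}}(\bar v)=\bigl(\mathcal T_{C_{2}}(\bar v)\cap\{D\bar y\}^{\perp}\bigr)^{\circ}$, whereas you package both identities as one instance of the conjugacy of critical cones for polyhedral convex functions. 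Your route is cleaner and treats (a) and (b) uniformly, but two things need repair before it is a proof. First, a labeling error: the level set $\{d\mid f^{\downarrow}_{-}(\bar z;d)=\langle\bar w,d\rangle\}$ is the \emph{normal} cone $\mathcal N_{\partial f(\bar z)}(\bar w)$ (the polar of $\mathrm{cone}(\partial f(\bar z)-\bar w)$, exactly as you derive it), not the tangent cone $\mathcal T_{\partial f(\bar z)}(\bar w)$; as literally written your ``concretely'' identifications are false, though the objects you actually construct are the right ones. Second, the ``classical fact'' must be stated for the correct pair: with $q=\sigma_{C_{2}}$ one has $\partial q^{*}(\bar v)=\mathcal N_{C_{2}}(\bar v)$ but $\partial q(D\bar y)$ is the exposed face $F=C_{2}\cap\{x\mid\langle D\bar y,x\rangle=\sigma_{C_{2}}(D\bar y)\}$, not $C_{2}$ itself, so the polarity you need is $\bigl(\mathcal T_{C_{2}}(\bar v)\cap\{D\bar y\}^{\perp}\bigr)^{\circ}=\mathcal N_{C_{2}}(\bar v)+\mathrm{span}(D\bar y)=\mathcal N_{F}(\bar v)$, where the last equality uses the polyhedral sum rule $\mathcal N_{C_{2}\cap H}(\bar v)=\mathcal N_{C_{2}}(\bar v)+\mathcal N_{H}(\bar v)$ for the supporting hyperplane $H$; this is precisely the content the paper establishes by its multiplier computation. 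With those two corrections your argument closes, and it is arguably shorter than the paper's.
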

\begin{proof}
	We only need to prove (\ref{polar-p*-equivalent-b}), since the validity of (\ref{polar-p*-equivalent-a}) follows easily.
	For $q(\cdot)=\|\cdot\|_{1}$ or $q(\cdot)=\|\cdot\|_{\infty}$, we have
	\begin{eqnarray*}
		\Big\{d\, \Big|\, q^{*\downarrow}_{-}(\bar{v},d)=\langle D\bar{y},d\rangle\Big\}=\mathcal{T}_{C_{2}}(\bar{v})\cap\{D\bar{y}\}^{\perp}
	\end{eqnarray*}
	and
	\begin{eqnarray*}
		\Big\{d\, \Big|\, q^{*\downarrow}_{-}(\bar{v},d)=\langle D\bar{y},d\rangle\Big\}^{\circ}=\mathcal{T}_{C_{2}}(\bar{v})^{\circ}+\{D\bar{y}\}=\mathcal{N}_{C_{2}}(\bar{v})+\{D\bar{y}\},
	\end{eqnarray*}
	which shows that for any $d\in\Big\{d\, \Big|\, q^{*\downarrow}_{-}(\bar{v},d)=\langle D\bar{y},d\rangle\Big\}^{\circ}$, there exist $\alpha\in\mathcal{R}$ and $\lambda_{i}\geq0$ for $i\in\mathcal{I}(\bar{v})$ such that $d=\sum\limits_{i\in\mathcal{I}(\bar{v})}\lambda_{i}\bar{A}^{T}_{i}+\alpha D\bar{y}$.
	
	Since $D\bar{y}\in\mathcal{N}_{C_{2}}(\bar{v})$, there exists $\beta_{i}\geq0$ for $i\in\mathcal{I}(\bar{v})$ such that $D\bar{y}=\sum\limits_{i\in\mathcal{I}(\bar{v})}\beta_{i}\bar{A}^{T}_{i}$. From
	\begin{eqnarray*}
		d\in\partial\delta^{*}_{C_{2}}(D\bar{y})&\Leftrightarrow& d\in\mathop{\mbox{argmax}}_{z}\Big\{\langle D\bar{y},z\rangle-\delta_{C_{2}}(z)\Big\}\\
		&\Leftrightarrow& d\in\mathop{\mbox{argmax}}_{z}\Big\{\langle D\bar{y},z\rangle\, \Big|\, \bar{A}z-\bar{b}\leq0\Big\},
	\end{eqnarray*}
	we obtain
	\begin{align}\label{eq:partial-delta*}
		\partial\delta^{*}_{C_{2}}(D\bar{y})&=\mathop{\mbox{argmax}}_{z}\left\{\langle D\bar{y},z\rangle\, \Big|\, \bar{A}z-\bar{b}\leq0\right\}\nonumber\\
		&=\left\{z\ \left|\begin{array}{ll}\langle \bar{A}_{i}^{T},z\rangle-\bar{b}_{i}=0,& \mbox{if } i\in\mathcal{I}(\bar{v})\cap\{j\ |\ \beta_{j}>0\},\\
			\langle \bar{A}_{i}^{T},z\rangle-\bar{b}_{i}\leq0,& \mbox{if } i\notin\mathcal{I}(\bar{v}),\mbox{or}\ i\in\mathcal{I}(\bar{v})\cap\{j\ |\  \beta_{j}=0\}.
		\end{array}\right.\right\}.
	\end{align}
	Therefore, it follows that
	\begin{align}\label{eq:delta-sup}
		(\delta^{*}_{C_{2}})^{\downarrow}_{-}(D\bar{y},d)&=\sup_{z}\Big\{\langle d,z\rangle\ \Big|\ z\in\partial\delta^{*}_{C_{2}}(D\bar{y})\Big\}\nonumber\\
		&=\sup_{z}\left\{\langle d,z\rangle\ \left|\begin{array}{ll}\langle \bar{A}_{i}^{T},z\rangle-\bar{b}_{i}=0,& i\in\mathcal{I}(\bar{v})\cap\{j\ |\ \beta_{j}>0\}\\
			\langle \bar{A}_{j}^{T},z\rangle-\bar{b}_{i}\leq0,& i\notin\mathcal{I}(\bar{v}),\mbox{or}\ i\in\mathcal{I}(\bar{v})\cap\{j\ |\ \beta_{j}=0\}.
		\end{array}\right.\right\}.
	\end{align}
	By the KKT condition of the optimization problem (\ref{eq:delta-sup}), there exists a $\widetilde{\beta}$ with $\widetilde{\beta}_{i}\in\mathcal{R}$ if $i\in\mathcal{I}(\bar{v})\cap\{j\ |\ \beta_{j}>0\}$, $\widetilde{\beta}_{i}\geq0$ if $i\notin\mathcal{I}(\bar{v})$ or $i\in\mathcal{I}(\bar{v})\cap\{j\ |\ \beta_{j}=0\}$ such that $d=\bar{A}^{T}\widetilde{\beta}$. Then we can obtain
	\begin{eqnarray*}
		q^{\downarrow}_{-}(D\bar{y},d)=(\delta^{*}_{C_{2}})^{\downarrow}_{-}(D\bar{y},d)=\left\{\begin{array}{ll}\langle \widetilde{\beta},\bar{b}\rangle,&\mbox{if}\ d=\bar{A}^{T}\widetilde{\beta},\\+\infty,& \mbox{otherwise.}\end{array}\right.
	\end{eqnarray*}
	Therefore,
	\begin{eqnarray*}
		\Big\{d\, \Big|\, q^{\downarrow}_{-}(D\bar{y},d)=\langle\bar{v},d\rangle\Big\}
		&=&\left\{d\, \left|\, \langle\widetilde{\beta},\bar{b}\rangle=\langle\bar{v},d\rangle\right.\right\}\\
		&=&\left\{d\, \left|\, \langle\widetilde{\beta},\bar{b}\rangle=\langle\bar{v},\bar{A}^{T}\widetilde{\beta}\rangle,\ d=\bar{A}^{T}\widetilde{\beta}\right.\right\}\\
		&=&\left\{d\, \left|\, \sum_{i=1}^{n}\widetilde{\beta}_{i}\bar{b}_{i}=\sum_{i=1}^{n}\widetilde{\beta}_{i}\langle \bar{A}_{i}^{T},\bar{v}\rangle,\ d=\bar{A}^{T}\widetilde{\beta}\right.\right\}\\
		&=&\left\{d\, \left|\, \sum_{i=1}^{n}\widetilde{\beta}_{i}\bar{b}_{i}=\sum_{i\in\mathcal{I}(\bar{v})}\widetilde{\beta}_{i}\bar{b}_{i}+\sum_{i\notin\mathcal{I}(\bar{v})}\widetilde{\beta}_{i}\langle \bar{A}_{i}^{T},\bar{v}\rangle,\ d=\bar{A}^{T}\widetilde{\beta}\right.\right\}\\
		&=&\left\{d\, \left|\, d=\sum_{i\in\mathcal{I}(\bar{v})}\widetilde{\beta}_{i}\bar{A}_{i}^{T}\right.\right\}\\
		&=&\left\{d\, \left|\, d=\widetilde{\gamma} D\bar{y}+\sum_{i\in\mathcal{I}(\bar{v})}\widetilde{\widetilde{\beta}}_{i}\bar{A}_{i}^{T}\right.\right\}\\
		&=&\mbox{span}(D\bar{y})+\mathcal{N}_{C_{2}}(\bar{v}),
	\end{eqnarray*}
	where $\widetilde{\widetilde{\beta}}_{i}\geq0$ for $i\in\mathcal{I}(\bar{v})$, $\widetilde{\gamma}=\min\limits_{{i\in\mathcal{I}(\bar{v})}\atop\{i\,|\,\beta_{i}>0\}}\left\{\frac{\widetilde{\beta_{i}}}{\beta_{i}}\right\}$, and the fifth equality of the above formula is because $\sum\limits_{i\notin\mathcal{I}(\bar{v})}\widetilde{\beta}_{i}(\langle \bar{A}_{i}^{T},\bar{v}\rangle-\bar{b}_{i})=0$ implies $\widetilde{\beta}_{i}=0\, (i\notin\mathcal{I}(\bar{v}))$. Then we obtain
	\begin{align}\label{eq:theorem5-eq-2}
		\Big\{d\ \Big|\ q^{*\downarrow}_{-}(D\bar{y},d)=\langle\bar{v},d\rangle\Big\}^{\circ}=\mathcal{N}_{C_{2}}(\bar{v})+\{D\bar{y}\}=\Big\{d\ |\ q^{\downarrow}_{-}(D\bar{y},d)=\langle\bar{v},d\rangle\Big\}.
	\end{align}
	This completes the proof.
\end{proof}

\begin{theorem}\label{SRCQ-critical-0}
	For $h(\cdot) = \delta_{C}(\cdot)$, $q(\cdot)=\|\cdot\|_{1}$ or $q(\cdot)=\|\cdot\|_{\infty}$,
	the SRCQ (\ref{dual-strict-robinson-qualification}) of the dual problem
	holds if and only if the critical cone of the primal problem \eqref{primal-problem}
	\begin{eqnarray*}
		\mathcal{C}(\bar{y})=\Big\{d\ \Big|\ h^{\downarrow}_{-}(A\bar{y}-b,Ad)+q^{\downarrow}_{-}(D\bar{y},Dd)=0\Big\}
	\end{eqnarray*}
	contains only the zero element.
\end{theorem}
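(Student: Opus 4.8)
The plan is to dualize the SRCQ by passing to polar cones, thereby reducing it to the triviality of a kernel, and then to recognize that kernel as the critical cone $\mathcal{C}(\bar y)$ with the help of Proposition \ref{prop-equivalent-h-p}. First I would introduce the shorthand
\begin{eqnarray*}
K_{1}:=\Big\{d\,\Big|\,h^{*\downarrow}_{-}(\bar u,d)=\langle A\bar y-b,d\rangle\Big\},\qquad K_{2}:=\Big\{d\,\Big|\,q^{*\downarrow}_{-}(\bar v,d)=\langle D\bar y,d\rangle\Big\},
\end{eqnarray*}
so that the SRCQ (\ref{dual-strict-robinson-qualification}) reads $A^{T}K_{1}+D^{T}K_{2}=\mathcal{R}^{n_{1}}$. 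From the descriptions of $h^{*\downarrow}_{-}$ and $q^{*\downarrow}_{-}$ recalled in Section \ref{sec:preliminaries} together with Proposition \ref{prop-Tangent-Normal-Cone}, both $K_{1}$ and $K_{2}$ are polyhedral convex cones, hence $A^{T}K_{1}+D^{T}K_{2}$ is a polyhedral (in particular closed) convex cone; since a closed convex cone $S\subseteq\mathcal{R}^{n_{1}}$ equals $\mathcal{R}^{n_{1}}$ if and only if $S^{\circ}=\{0\}$, the SRCQ is equivalent to $(A^{T}K_{1}+D^{T}K_{2})^{\circ}=\{0\}$. Applying the elementary cone identities $(C+C')^{\circ}=C^{\circ}\cap C'^{\circ}$ and $(A^{T}C)^{\circ}=\{d\,|\,Ad\in C^{\circ}\}$, this becomes
\begin{eqnarray*}
\Big\{d\in\mathcal{R}^{n_{1}}\,\Big|\,Ad\in K_{1}^{\circ},\ Dd\in K_{2}^{\circ}\Big\}=\{0\},
\end{eqnarray*}
and by Proposition \ref{prop-equivalent-h-p} we have $K_{1}^{\circ}=\{d\,|\,h^{\downarrow}_{-}(A\bar y-b,d)=\langle\bar u,d\rangle\}$ and $K_{2}^{\circ}=\{d\,|\,q^{\downarrow}_{-}(D\bar y,d)=\langle\bar v,d\rangle\}$. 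Thus the SRCQ is equivalent to requiring that the only $d$ with $h^{\downarrow}_{-}(A\bar y-b,Ad)=\langle\bar u,Ad\rangle$ and $q^{\downarrow}_{-}(D\bar y,Dd)=\langle\bar v,Dd\rangle$ be $d=0$.

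The remaining step is to show that this ``double-equality'' set coincides with $\mathcal{C}(\bar y)$. From the KKT system (\ref{composite-problem-kkt}) and conjugate duality, $\bar u\in\partial h(A\bar y-b)$ and $\bar v\in\partial q(D\bar y)$, so the subgradient inequality gives $h^{\downarrow}_{-}(A\bar y-b,Ad)\ge\langle\bar u,Ad\rangle$ and $q^{\downarrow}_{-}(D\bar y,Dd)\ge\langle\bar v,Dd\rangle$ for every $d$; moreover, since $q=\|\cdot\|_{1}$ or $\|\cdot\|_{\infty}$ is finite-valued and globally Lipschitz, $q^{\downarrow}_{-}(D\bar y,Dd)=q'(D\bar y;Dd)$ is always finite. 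Adding the two inequalities and using $A^{T}\bar u+D^{T}\bar v=0$ yields $h^{\downarrow}_{-}(A\bar y-b,Ad)+q^{\downarrow}_{-}(D\bar y,Dd)\ge0$ for all $d$, with equality if and only if both individual inequalities hold with equality --- the finiteness of the $q$-term being what rules out an $\infty-\infty$ ambiguity. Hence $d\in\mathcal{C}(\bar y)$ if and only if $d$ belongs to the double-equality set, and chaining this with the equivalences of the previous paragraph finishes the proof.

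The bookkeeping I am deferring is routine: the polyhedrality of $K_{1},K_{2}$ (read off from the formulas for $\mathcal{T}_{C_{1}}(\bar u)$ and $\mathcal{T}_{C_{2}}(\bar v)$ in Section \ref{sec:preliminaries}), the cone-polar identities above, and the bipolar characterization of closed convex cones. The one spot needing care --- and the closest thing to an obstacle --- is the identification $\mathcal{C}(\bar y)=\{d\mid h^{\downarrow}_{-}(A\bar y-b,Ad)=\langle\bar u,Ad\rangle,\ q^{\downarrow}_{-}(D\bar y,Dd)=\langle\bar v,Dd\rangle\}$, which must combine the two subgradient inequalities, the stationarity identity $A^{T}\bar u+D^{T}\bar v=0$, and the finiteness of $q^{\downarrow}_{-}(D\bar y,\cdot)$ simultaneously; it is precisely at this last point that the hypothesis $q=\|\cdot\|_{1}$ or $\|\cdot\|_{\infty}$ (finite-valuedness of $q$) is used in an essential way.
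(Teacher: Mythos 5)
Your proposal is correct and follows essentially the same route as the paper: polarize the SRCQ, reduce via the cone identities and Proposition \ref{prop-equivalent-h-p} to the triviality of the intersection $\{d\mid h^{\downarrow}_{-}(A\bar y-b,Ad)=\langle\bar u,Ad\rangle\}\cap\{d\mid q^{\downarrow}_{-}(D\bar y,Dd)=\langle\bar v,Dd\rangle\}$, and identify that intersection with $\mathcal{C}(\bar y)$ using the KKT system. If anything you are slightly more careful than the paper, which passes to polars without remarking on closedness of the polyhedral sum and justifies the final identification by appealing to Lipschitz continuity of $h$ and $q$ (even though $h=\delta_{C}$ is not Lipschitz), whereas your subgradient-inequality argument with the finiteness of the $q$-term handles this cleanly.
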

\begin{proof}
	Computing the polar cone of both sides of (\ref{dual-strict-robinson-qualification}), we obtain that
	\begin{eqnarray*}
		\left(A^{T}\Big\{d\ \Big|\ h^{*\downarrow}_{-}(\bar{u},d)=\langle A\bar{y}-b,d\rangle\Big\}+D^{T}\Big\{d\ \Big|\ q^{*\downarrow}_{-}(\bar{v},d)=\langle D\bar{y},d\rangle\Big\}\right)^{\circ}=\{0\}.
	\end{eqnarray*}
	Since
	\begin{eqnarray*}
		&&\left(A^{T}\Big\{d\, \Big|\, h^{*\downarrow}_{-}(\bar{u},d)=\langle A\bar{y}-b,d\rangle\Big\}+D^{T}\Big\{d\, \Big|\, q^{*\downarrow}_{-}(\bar{v},d)=\langle D\bar{y},d\rangle\Big\}\right)^{\circ}\\
		&=&\left(A^{T}\Big\{d\, \Big|\, h^{*\downarrow}_{-}(\bar{u},d)=\langle A\bar{y}-b,d\rangle\Big\}\right)^{\circ}\cap\left(D^{T}\Big\{d\, \Big|\, q^{*\downarrow}_{-}(\bar{v},d)=\langle D\bar{y},d\rangle\Big\}\right)^{\circ}
	\end{eqnarray*}
	and
	\begin{eqnarray*}
		&&\forall\, d\in\left(A^{T}\Big\{d\, \Big|\, h^{*\downarrow}_{-}(\bar{u},d)=\langle A\bar{y}-b,d\rangle\Big\}\right)^{\circ}\ \Leftrightarrow\  Ad\in\Big\{d\, \Big|\, h^{*\downarrow}_{-}(\bar{u},d)=\langle A\bar{y}-b,d\rangle\Big\}^{\circ},\\
		&&\forall\, d\in\left(D^{T}\Big\{d\, \Big|\, q^{*\downarrow}_{-}(\bar{v},d)=\langle D\bar{y},d\rangle\Big\}\right)^{\circ}\quad\Leftrightarrow\quad Dd\in\Big\{d\, \Big|\, q^{*\downarrow}_{-}(\bar{v},d)=\langle D\bar{y},d\rangle\Big\}^{\circ},
	\end{eqnarray*}
	it follows from Proposition \ref{prop-equivalent-h-p} that (\ref{dual-strict-robinson-qualification}) is equivalent to
	\begin{eqnarray*}
		\Big\{d\, \Big|\, h^{\downarrow}_{-}(A\bar{y}-b,Ad)=\langle\bar{u},Ad\rangle\Big\}\cap\Big\{d\, \Big|\, q^{\downarrow}_{-}(D\bar{y},Dd)=\langle\bar{v},Dd\rangle\Big\}=\{0\}.
	\end{eqnarray*}
	By the KKT condition \eqref{composite-problem-kkt} and the property of the directional derivative (since $h$ and $q$ are Lipschitz continuous), we have
	\begin{eqnarray}
		&&\Big\{d\, \Big|\, h^{\downarrow}_{-}(A\bar{y}-b,Ad)+q^{\downarrow}_{-}(D\bar{y},Dd)=0\Big\}\nonumber\\
		&=&\Big\{d\, \Big|\, h^{\downarrow}_{-}(A\bar{y}-b,Ad)=\langle\bar{u},Ad\rangle,\ q^{\downarrow}_{-}(D\bar{y},Dd)=\langle\bar{v},Dd\rangle\Big\}.\label{eq:theorem6-eq-1}
	\end{eqnarray}
	The desired result follows.
\end{proof}

\begin{proposition}\label{proposition-ri} Suppose Assumption \ref{assumption} holds. For $h(\cdot) = \delta_{C}(\cdot)$, $q(\cdot)=\|\cdot\|_{1}$ or $q(\cdot)=\|\cdot\|_{\infty}$,
	let $(\bar{y},\bar{u},\bar{v})$ be a solution of the KKT system.
	Then
	\begin{eqnarray*}
		&&\bar{u}\in\mbox{ri}(\partial h(A\bar{y}-b))\quad\Leftrightarrow\quad A\bar{y}-b\in\mbox{ri}(\mathcal{N}_{C_{1}}(\bar{u})),\\
		&&\bar{v}\in\mbox{ri}(\partial q(D\bar{y}))\quad\Leftrightarrow\quad D\bar{y}\in\mbox{ri}(\mathcal{N}_{C_{2}}(\bar{v})).
	\end{eqnarray*}
\end{proposition}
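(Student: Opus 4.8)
The plan is to obtain both equivalences from a single statement about polyhedral convex sets. In our setting $h=\delta_{C}$ with $C=\mathcal{R}^{n_{2}}_{-}$ and $q^{*}=\delta_{C_{2}}$ with $C_{2}$ polyhedral, so that $\partial h(A\bar{y}-b)=\mathcal{N}_{C}(A\bar{y}-b)$ and $\partial q^{*}(\bar{v})=\mathcal{N}_{C_{2}}(\bar{v})$; since $C$ is a closed convex cone we have $h^{*}=\sigma_{C}=\delta_{C^{\circ}}=\delta_{C_{1}}$, hence $\mathcal{N}_{C_{1}}(\bar{u})=\partial\sigma_{C}(\bar{u})$; and by Fenchel--Moreau $\sigma_{C_{2}}=q^{**}=q$, hence $\partial q(D\bar{y})=\partial\sigma_{C_{2}}(D\bar{y})$. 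So the proposition is the specialization to $(K,\bar{a},\bar{b})=(C,A\bar{y}-b,\bar{u})$ and to $(K,\bar{a},\bar{b})=(C_{2},\bar{v},D\bar{y})$ of the following lemma: \emph{for a polyhedral convex set $K$, a point $\bar{a}\in K$, and $\bar{b}\in\mathcal{N}_{K}(\bar{a})$ (equivalently $\bar{a}\in\partial\sigma_{K}(\bar{b})$), one has $\bar{b}\in\mbox{ri}(\mathcal{N}_{K}(\bar{a}))$ if and only if $\bar{a}\in\mbox{ri}(\partial\sigma_{K}(\bar{b}))$.} The hypotheses $\bar{a}\in K$ and $\bar{b}\in\mathcal{N}_{K}(\bar{a})$ in both instances are exactly what the KKT system \eqref{composite-problem-kkt} provides.

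\textbf{Proof of the lemma.} I would use the fact (already used, via Proposition 2.4.1 of \cite{FacchineiPand2003} and Theorem 6.8 of \cite{Rockafellar1970}, in the proof of Proposition \ref{prop-critical-cone-linearspace-dualvalue}) that for a convex set $S$ with $\bar{b}\in S$, one has $\bar{b}\in\mbox{ri}(S)$ if and only if $\mathcal{N}_{S}(\bar{b})$ is a linear subspace. It therefore suffices to show that the two cones $\mathcal{N}_{\mathcal{N}_{K}(\bar{a})}(\bar{b})$ and $\mathcal{N}_{\partial\sigma_{K}(\bar{b})}(\bar{a})$ are mutually polar, because a closed convex cone is a linear subspace if and only if its polar is. For the first cone, since $\mathcal{N}_{K}(\bar{a})$ is a cone containing $\bar{b}$, a short computation gives $\mathcal{N}_{\mathcal{N}_{K}(\bar{a})}(\bar{b})=\mathcal{N}_{K}(\bar{a})^{\circ}\cap\{\bar{b}\}^{\perp}=\mathcal{N}_{K}(\bar{a})^{\circ}\cap(\mbox{span}(\bar{b}))^{\circ}=(\mathcal{N}_{K}(\bar{a})+\mbox{span}(\bar{b}))^{\circ}$, using that polars turn sums into intersections. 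For the second cone, let $H_{\bar{b}}:=\{v\,|\,\langle\bar{b},v\rangle=\langle\bar{b},\bar{a}\rangle\}$ be the hyperplane exposing the face $\partial\sigma_{K}(\bar{b})=K\cap H_{\bar{b}}$; since $K$ is polyhedral and $H_{\bar{b}}$ is an affine hyperplane, the normal cone of the intersection splits, $\mathcal{N}_{K\cap H_{\bar{b}}}(\bar{a})=\mathcal{N}_{K}(\bar{a})+\mathcal{N}_{H_{\bar{b}}}(\bar{a})=\mathcal{N}_{K}(\bar{a})+\mbox{span}(\bar{b})$. Comparing the two displays yields $\mathcal{N}_{\mathcal{N}_{K}(\bar{a})}(\bar{b})=(\mathcal{N}_{\partial\sigma_{K}(\bar{b})}(\bar{a}))^{\circ}$, which (taking bipolars, since the right-hand cone is closed) establishes the claimed mutual polarity, and hence the lemma.

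\textbf{Conclusion and the delicate point.} Granting the lemma, the two specializations above close the proof; I would also note that Assumption \ref{assumption} is not used in this argument, serving instead to guarantee $D\bar{y}\neq 0$, i.e.\ that $\partial q(D\bar{y})$ is a proper face of $C_{2}$, which is relevant for the accompanying uniqueness results but not for the equivalences themselves. The step I expect to demand the most care is the splitting $\mathcal{N}_{K\cap H_{\bar{b}}}(\bar{a})=\mathcal{N}_{K}(\bar{a})+\mbox{span}(\bar{b})$: it must be derived from polyhedrality (for instance from the active-constraint descriptions of $\mathcal{N}_{K}$ and $\mathcal{N}_{K\cap H_{\bar{b}}}$ in Proposition \ref{prop-Tangent-Normal-Cone}) and not from a relative-interior constraint qualification, since $\mbox{ri}(K)\cap H_{\bar{b}}=\emptyset$ as soon as $\partial\sigma_{K}(\bar{b})$ is a proper face of $K$. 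If one prefers to avoid the lemma, both equivalences can also be verified directly and coordinatewise from the explicit formulas for $\partial h$, $\partial q$, $\mathcal{N}_{C_{1}}$ and $\mathcal{N}_{C_{2}}$ collected in Section \ref{sec:preliminaries}, each reducing to an identification of support/sign index sets; of these the $\ell_{\infty}$ case, in which $\partial q(D\bar{y})$ is a simplex-type face and $\mathcal{N}_{C_{2}}(\bar{v})$ a polyhedral cone of mixed type, is the most computational.
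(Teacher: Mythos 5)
Your proposal is correct, but it takes a genuinely different route from the paper. The paper proves only the second equivalence (asserting the first is analogous) by an entirely explicit computation: it fixes a representation $D\bar{y}=\sum_{i\in\mathcal{I}(\bar{v})}\beta_{i}\bar{A}_{i}^{T}$, uses the active-index description \eqref{eq:partial-delta*} of the face $\partial q(D\bar{y})$ to write out $\mbox{ri}(\partial q(D\bar{y}))$ by turning the inactive inequalities strict, and then matches the resulting index condition $\mathcal{I}(\bar{v})\cap\{j\,|\,\beta_{j}=0\}=\emptyset$ against the description of $\mbox{ri}(\mathcal{N}_{C_{2}}(\bar{v}))$ as the positive combinations of the active rows --- essentially the ``direct, coordinatewise'' alternative you sketch at the end. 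Your main argument instead isolates a representation-free polyhedral lemma ($\bar{b}\in\mbox{ri}(\mathcal{N}_{K}(\bar{a}))\Leftrightarrow\bar{a}\in\mbox{ri}(\partial\sigma_{K}(\bar{b}))$) and proves it by showing $\mathcal{N}_{\mathcal{N}_{K}(\bar{a})}(\bar{b})=\bigl(\mathcal{N}_{K}(\bar{a})+\mbox{span}(\bar{b})\bigr)^{\circ}=\bigl(\mathcal{N}_{\partial\sigma_{K}(\bar{b})}(\bar{a})\bigr)^{\circ}$, invoking the criterion that a point lies in the relative interior of a convex set iff its normal cone there is a subspace; each step checks out, including your correctly flagged delicate point that the splitting $\mathcal{N}_{K\cap H_{\bar{b}}}(\bar{a})=\mathcal{N}_{K}(\bar{a})+\mbox{span}(\bar{b})$ must come from polyhedrality (Rockafellar's no-CQ sum rule, or the active-row description in Proposition \ref{prop-Tangent-Normal-Cone}) rather than from a relative-interior constraint qualification. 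What your route buys is uniformity (both equivalences are one lemma, and the $\ell_{1}$ and $\ell_{\infty}$ cases need no separate treatment), independence from the chosen generator representation $\beta$ that the paper's argument is pinned to, and the accurate observation that Assumption \ref{assumption} is not actually needed for these equivalences (the paper uses it only to ensure $\mathcal{I}(\bar{v})\neq\emptyset$); what the paper's route buys is that the explicit face descriptions it derives along the way are reused in later propositions.
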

\begin{proof}
	We only give a proof for the second equivalence and the first one can be proved similarly. Since $D\bar{y}\in\mathcal{N}_{C_{2}}(\bar{v})$, there exists $\beta_{i}\geq0$ for $i\in\mathcal{I}(\bar{v})$ such that $D\bar{y}=\sum\limits_{i\in\mathcal{I}(\bar{v})}\beta_{i}\bar{A}^{T}_{i}$ by Proposition \ref{prop-Tangent-Normal-Cone}. Assumption \ref{assumption} implies that $\mathcal{I}(\bar{v})\neq\emptyset$. Due to (\ref{eq:partial-delta*}) we have that
	\begin{eqnarray*}
		&&\mbox{ri}(\partial q(D\bar{y}))\\
		&&=\mbox{ri}\left(\left\{z\, \left|\,\begin{array}{ll}\langle \bar{A}_{i}^{T},z\rangle-\bar{b}_{i}=0,& \mbox{if } i\in\mathcal{I}(\bar{v})\cap\{j\, |\, \beta_{j}>0\},\\
			\langle \bar{A}_{i}^{T},z\rangle-\bar{b}_{i}\leq0,& \mbox{if } i\notin\mathcal{I}(\bar{v}),\mbox{or}\ i\in\mathcal{I}(\bar{v})\cap\{j\ |\  \beta_{j}=0\}.
		\end{array}\right.\right\}\right)\\&&=\left\{z\, \left|\,\begin{array}{ll}\langle \bar{A}_{i}^{T},z\rangle-\bar{b}_{i}=0,& \mbox{if } i\in\mathcal{I}(\bar{v})\cap\{j\ |\ \beta_{j}>0\},\\
			\langle \bar{A}_{i}^{T},z\rangle-\bar{b}_{i}<0,& \mbox{if } i\notin\mathcal{I}(\bar{v}),\mbox{or}\ i\in\mathcal{I}(\bar{v})\cap\{j\ |\  \beta_{j}=0\}.
		\end{array}\right.\right\},
	\end{eqnarray*}
	which implies that $\bar{v}\in\mbox{ri}(\partial q(D\bar{y}))$ is equivalent to $\mathcal{I}(\bar{v})\cap\{j\ |\  \beta_{j}=0\}=\emptyset$.
	Therefore, the desired result follows from the fact that $D\bar{y}\in\mbox{ri}(\mathcal{N}_{C_{2}}(\bar{v}))$ is equivalent to $D\bar{y}=\sum\limits_{i\in\mathcal{I}(\bar{v})}\beta_{i}\bar{A}^{T}_{i}$ with $\beta_{i}>0$ for all $i\in\mathcal{I}(\bar{v})$.
\end{proof}

\begin{proposition} Suppose that Assumption \ref{assumption} is valid. For $h(\cdot) = \delta_{C}(\cdot)$, $q(\cdot)=\|\cdot\|_{1}$ or $q(\cdot)=\|\cdot\|_{\infty}$,
	the SRCQ (\ref{dual-strict-robinson-qualification}) is the same as the nondegeneracy condition (\ref{dual-nondegeneracy-cond}) for the dual problem \eqref{dual-problem}.
\end{proposition}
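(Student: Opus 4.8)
The plan is to prove the two inclusions between the left-hand sides of \eqref{dual-strict-robinson-qualification} and \eqref{dual-nondegeneracy-cond} separately. Write $S_{1}:=\{d\mid h^{*\downarrow}_{-}(\bar u,d)=\langle A\bar y-b,d\rangle\}$ and $S_{2}:=\{d\mid q^{*\downarrow}_{-}(\bar v,d)=\langle D\bar y,d\rangle\}$; as recorded in Section~\ref{sec:preliminaries} and in the proof of Proposition~\ref{prop-equivalent-h-p}, one has $S_{1}=\mathcal{T}_{C_{1}}(\bar u)\cap\{A\bar y-b\}^{\perp}$ and $S_{2}=\mathcal{T}_{C_{2}}(\bar v)\cap\{D\bar y\}^{\perp}$. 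For the direction \eqref{dual-nondegeneracy-cond}~$\Rightarrow$~\eqref{dual-strict-robinson-qualification} I would argue directly from the KKT system \eqref{composite-problem-kkt}: since $A\bar y-b\in\partial h^{*}(\bar u)=\mathcal{N}_{C_{1}}(\bar u)=(\mathcal{T}_{C_{1}}(\bar u))^{\circ}$, any $d\in\mbox{lin}\,\mathcal{T}_{C_{1}}(\bar u)$ satisfies $\pm d\in\mathcal{T}_{C_{1}}(\bar u)$, hence $\langle A\bar y-b,d\rangle=0$, so $\mbox{lin}\,\mathcal{T}_{C_{1}}(\bar u)\subseteq S_{1}$; likewise $\mbox{lin}\,\mathcal{T}_{C_{2}}(\bar v)\subseteq S_{2}$. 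Consequently the left-hand side of \eqref{dual-nondegeneracy-cond} is contained in that of \eqref{dual-strict-robinson-qualification}, so if the former equals $\mathcal{R}^{n_{1}}$ so does the latter. This implication uses only the KKT relations, not the special form of $q$.

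For the converse, \eqref{dual-strict-robinson-qualification}~$\Rightarrow$~\eqref{dual-nondegeneracy-cond}, the idea is to show that under the SRCQ the sets $S_{1},S_{2}$ actually coincide with $\mbox{lin}\,\mathcal{T}_{C_{1}}(\bar u)$ and $\mbox{lin}\,\mathcal{T}_{C_{2}}(\bar v)$, so that \eqref{dual-strict-robinson-qualification} turns into \eqref{dual-nondegeneracy-cond} verbatim. By Theorem~\ref{SRCQ-critical-0} the SRCQ forces $\mathcal{C}(\bar y)=\{0\}$, which in particular is a linear subspace. Viewing \eqref{primal-problem} as the instance of \eqref{composite-problem-p} with $\hat h=h$, $\hat q=q$, $\mathcal{A}=A$, $\mathcal{W}=D$, $F(\bar y)=(A\bar y-b,D\bar y)$ and dual solution $(\bar u,\bar v)$, and using $\partial\hat g(F(\bar y))=\partial h(A\bar y-b)\times\partial q(D\bar y)$ together with the fact that $\mbox{ri}$ of a Cartesian product is the product of relative interiors, Proposition~\ref{prop-critical-cone-linearspace-dualvalue} gives $\bar u\in\mbox{ri}(\partial h(A\bar y-b))$ and $\bar v\in\mbox{ri}(\partial q(D\bar y))$. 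Proposition~\ref{proposition-ri}, which invokes Assumption~\ref{assumption}, then yields
\begin{eqnarray*}
	A\bar y-b\in\mbox{ri}(\mathcal{N}_{C_{1}}(\bar u))\quad\mbox{and}\quad D\bar y\in\mbox{ri}(\mathcal{N}_{C_{2}}(\bar v)).
\end{eqnarray*}
Finally I would invoke the elementary fact that for a polyhedral cone $K$ (here $\mathcal{T}_{C_{1}}(\bar u)$ and $\mathcal{T}_{C_{2}}(\bar v)$ are polyhedral by Proposition~\ref{prop-Tangent-Normal-Cone} and the polyhedrality of $C_{2}$ for $q=\|\cdot\|_{1},\|\cdot\|_{\infty}$) and any $\lambda\in\mbox{ri}(K^{\circ})$, the exposed face $K\cap\{\lambda\}^{\perp}$ is the minimal face of $K$, namely $\mbox{lin}\,K$ — this can also be checked by hand from the explicit descriptions of $\mathcal{T}_{C_{i}}$ and $\mathcal{N}_{C_{i}}$ in Section~\ref{sec:preliminaries}. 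Applying it with $K=\mathcal{T}_{C_{1}}(\bar u),\lambda=A\bar y-b$ and with $K=\mathcal{T}_{C_{2}}(\bar v),\lambda=D\bar y$ gives $S_{1}=\mbox{lin}\,\mathcal{T}_{C_{1}}(\bar u)$ and $S_{2}=\mbox{lin}\,\mathcal{T}_{C_{2}}(\bar v)$; substituting into \eqref{dual-strict-robinson-qualification} produces exactly \eqref{dual-nondegeneracy-cond}, which therefore holds.

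The first direction is routine. The crux is the second implication, and within it the step that extracts from the SRCQ the strict-complementarity-type information $\bar u\in\mbox{ri}(\partial h(A\bar y-b))$ and $\bar v\in\mbox{ri}(\partial q(D\bar y))$: it is precisely this that collapses the critical cones $S_{1},S_{2}$ onto the lineality spaces and makes the two qualification conditions agree. This is where Theorem~\ref{SRCQ-critical-0}, Proposition~\ref{prop-critical-cone-linearspace-dualvalue}, Proposition~\ref{proposition-ri} and Assumption~\ref{assumption} all enter, and where the polyhedral structure of $q=\|\cdot\|_{1}$ or $\|\cdot\|_{\infty}$ is used throughout.
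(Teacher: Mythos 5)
Your proposal is correct and follows essentially the same route as the paper: the paper's proof likewise chains Theorem~\ref{SRCQ-critical-0}, Proposition~\ref{prop-critical-cone-linearspace-dualvalue} and Proposition~\ref{proposition-ri} to get $A\bar y-b\in\mbox{ri}(\mathcal{N}_{C_{1}}(\bar u))$ and $D\bar y\in\mbox{ri}(\mathcal{N}_{C_{2}}(\bar v))$, and then invokes Proposition~4.73 of Bonnans--Shapiro (your ``minimal exposed face'' fact) to identify $\mathcal{T}_{C_{i}}\cap\{\cdot\}^{\perp}$ with the lineality spaces. The only difference is that you spell out the easy inclusion $\mbox{lin}\,\mathcal{T}_{C_{i}}\subseteq S_{i}$ for the converse direction, which the paper leaves implicit.
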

\begin{proof}
	If the SRCQ (\ref{dual-strict-robinson-qualification}) holds, we have $A\bar{y}-b\in\mbox{ri}(\mathcal{N}_{C_{1}}(\bar{u}))$ and $D\bar{y}\in\mbox{ri}(\mathcal{N}_{C_{2}}(\bar{v}))$ due to Theorem \ref{SRCQ-critical-0}, Proposition \ref{prop-critical-cone-linearspace-dualvalue} and Proposition \ref{proposition-ri}. Based on Proposition 4.73 of \cite{Bonnans2000}, we obtain
	\begin{eqnarray*}
		&&\mbox{lin}(\mathcal{T}_{C_{1}}(\bar{u}))=\mathcal{T}_{C_{1}}(\bar{u})\ \cap\ \{A\bar{y}-b\}^{\perp},\\
		&&\mbox{lin}(\mathcal{T}_{C_{2}}(\bar{v}))=\mathcal{T}_{C_{2}}(\bar{v})\ \cap\ \{D\bar{y}\}^{\perp},
	\end{eqnarray*}
	and complete the proof.
\end{proof}

\begin{proposition}For $h(\cdot) = \delta_{C}(\cdot)$, $q(\cdot)=\|\cdot\|_{1}$ or $q(\cdot)=\|\cdot\|_{\infty}$,
	the critical cone $\mathcal{C}(\bar{y})=\{0\}$ if and only if the solution $\bar{y}$ of the primal problem \eqref{primal-problem} is unique.
\end{proposition}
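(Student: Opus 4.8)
The plan is to exploit that, for these choices of $h$ and $q$, the primal objective $\theta(y):=h(Ay-b)+q(Dy)$ is a \emph{polyhedral} convex function: $h=\delta_{C}$ with $C=\mathcal{R}^{n_{2}}_{-}$ a polyhedron, and $q=\|\cdot\|_{1}$ or $\|\cdot\|_{\infty}$ is a (finite-valued) polyhedral convex function, so the sum of their affine compositions is again polyhedral, i.e. $\mathrm{epi}\,\theta$ is a polyhedral set. Hence the optimal set $\Omega:=\mathop{\mathrm{argmin}}_{y}\theta(y)$ is a nonempty polyhedral convex set on which $\theta$ is constant, equal to the optimal value $\theta^{\star}$. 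Since $h$ is polyhedral and $q$ is Lipschitz, we have $h^{\downarrow}_{-}(A\bar{y}-b;\cdot)=h'(A\bar{y}-b;\cdot)$ and $q^{\downarrow}_{-}(D\bar{y};\cdot)=q'(D\bar{y};\cdot)$, so that $h^{\downarrow}_{-}(A\bar{y}-b;Ad)+q^{\downarrow}_{-}(D\bar{y};Dd)$ is exactly the one-sided directional derivative $\theta'(\bar{y};d)$; by the form of the critical cone recalled in Theorem \ref{SRCQ-critical-0} this gives $\mathcal{C}(\bar{y})=\{d\mid\theta'(\bar{y};d)=0\}$, and since $\bar{y}$ minimizes $\theta$, $\theta'(\bar{y};d)\geq0$ for every $d$.

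The core step is an elementary fact about polyhedral convex functions, which I would isolate as a lemma: for $\bar{y}\in\Omega$ and any direction $d$, one has $d\in\mathcal{C}(\bar{y})$ if and only if there is $\bar{t}>0$ with $\bar{y}+td\in\Omega$ for all $t\in[0,\bar{t}]$. To prove it, write $\theta$ as a maximum of finitely many affine functions plus the indicator of a polyhedron; along the ray $t\mapsto\theta(\bar{y}+td)$ the function is then, on some nondegenerate initial interval $[0,\bar{t}]$, either identically $+\infty$ on $(0,\bar{t}]$ (precisely when $Ad\notin\mathcal{T}_{C}(A\bar{y}-b)$, i.e. $\theta'(\bar{y};d)=+\infty$) or affine with slope exactly $\theta'(\bar{y};d)$. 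Thus $\theta'(\bar{y};d)=0$ forces $\theta(\bar{y}+td)\equiv\theta^{\star}$ on $[0,\bar{t}]$, hence $\bar{y}+td\in\Omega$ there; conversely, $\bar{y}+td\in\Omega$ on $[0,\bar{t}]$ makes the slope zero, so $d\in\mathcal{C}(\bar{y})$.

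Granting the lemma, both implications follow at once. If $\bar{y}$ is the unique solution and $d\in\mathcal{C}(\bar{y})$, the lemma yields $\bar{y}+\bar{t}d\in\Omega=\{\bar{y}\}$, forcing $d=0$; hence $\mathcal{C}(\bar{y})=\{0\}$. Conversely, if $\mathcal{C}(\bar{y})=\{0\}$ but $\Omega$ contained some $\hat{y}\neq\bar{y}$, then by convexity of $\Omega$ the whole segment $[\bar{y},\hat{y}]$ lies in $\Omega$, so $d:=\hat{y}-\bar{y}\neq0$ satisfies $\bar{y}+td\in\Omega$ for $t\in[0,1]$, and the lemma gives $d\in\mathcal{C}(\bar{y})$, contradicting $\mathcal{C}(\bar{y})=\{0\}$. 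Therefore $\Omega=\{\bar{y}\}$.

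I expect the main obstacle to be the lemma, and specifically the passage from ``$\theta'(\bar{y};d)=0$'' (flatness to first order) to ``$\theta$ is genuinely constant along $\bar{y}+td$ on a nondegenerate interval $[0,\bar{t}]$'': this is exactly where polyhedrality of \emph{both} $h$ and $q$ is essential, since a polyhedral convex function is piecewise linear and hence a vanishing one-sided slope along a feasible ray is inherited by the active affine piece on a whole interval with $\bar{t}>0$. The argument genuinely fails for $q=\|\cdot\|_{2}$, which matches the restriction in the statement. The remaining points — that $h^{\downarrow}_{-}$ and $q^{\downarrow}_{-}$ coincide with the ordinary directional derivatives here, that their sum along $Ad$ and $Dd$ equals $\theta'(\bar{y};d)$, and that this matches the stated form of $\mathcal{C}(\bar{y})$ — are routine and can be read off from the preliminaries.
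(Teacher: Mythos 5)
Your proof is correct, but it follows a genuinely different route from the paper's. The paper argues through the KKT system: for the forward implication it takes two distinct primal solutions, forms the differences $dy,du,dv$, and uses the decomposition of the projection onto a polyhedral set (Theorem 4.1.1 of Facchinei--Pang) to show $dy$ lies in the critical cone; for the converse it shows that perturbing $\bar{y}$ by any nonzero critical direction $dy$ leaves the KKT system satisfied with the \emph{same} multipliers $(\bar{u},\bar{v})$, contradicting uniqueness. You instead work entirely on the primal side, exploiting that $\theta(y)=h(Ay-b)+q(Dy)$ is polyhedral, so that along any ray from a minimizer the function is either immediately infeasible or affine on a nondegenerate initial interval with slope $\theta'(\bar{y};d)$; your lemma then identifies $\mathcal{C}(\bar{y})$ with the set of directions along which the solution set extends locally, i.e.\ essentially with $\mathcal{T}_{\Omega}(\bar{y})$, from which both implications are immediate by convexity of $\Omega$. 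Your argument is more elementary and self-contained (no dual variables, no projection calculus), and it makes transparent exactly where the restriction to $p=1,\infty$ enters — the passage from vanishing one-sided slope to genuine constancy on an interval fails for $\|\cdot\|_{2}$. The paper's route, by contrast, reuses the projection identities already established in the section (in particular the identification of the critical cone with normal-cone translates) and keeps the multipliers in view, which is what it needs for the surrounding equivalences with the SRCQ and the nondegeneracy condition. One small point worth making explicit in your write-up: the identification $h^{\downarrow}_{-}(A\bar{y}-b;\cdot)=h'(A\bar{y}-b;\cdot)$ for $h=\delta_{C}$ relies on $C$ being polyhedral (so that the cone of feasible directions equals the tangent cone); you attribute it correctly to polyhedrality, but it deserves a sentence since for general convex $C$ the ordinary directional derivative of $\delta_{C}$ can be strictly larger than the lower epiderivative.
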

\begin{proof}
	We first prove that the critical cone $\mathcal{C}(\bar{y})=\{0\}$ implies the solution of the primal problem is unique by contradiction. Let $\bar{y}$ and $y'$ be two different solutions of the primal problem \eqref{primal-problem}, and $u',v'$ are the dual solutions corresponding to $y'$. Then by the KKT condition \eqref{composite-problem-kkt}, we have
	\begin{eqnarray*}
		&&A^{T}\bar{u}+D^{T}\bar{v}=0,\ \bar{u}\in\partial h(A\bar{y}-b),\ \bar{v}\in\partial q(D\bar{y}),\\
		&&A^{T}u'+D^{T}v' =0,\ u'\in\partial h(Ay'-b),\ v'\in\partial q(Dy').
	\end{eqnarray*}
	Let $dy=y'-\bar{y}$, $du=u'-\bar{u}$ and $dv=v'-\bar{v}$, we have
	\begin{eqnarray}\label{critical-cone-unique-solution-1}
		A^{T}du+D^{T}dv=0.
	\end{eqnarray}
	Since
	\begin{eqnarray*}
		&&\bar{u}=\Pi_{C_{1}}(A\bar{y}-b+\bar{u}),\ \bar{v}=\Pi_{C_{2}}(D\bar{y}+\bar{v}),\\
		&&u'=\Pi_{C_{1}}(Ay'-b+u'),\ v'=\Pi_{C_{2}}(Dy'+v'),
	\end{eqnarray*}
	by Theorem 4.1.1 of \cite{FacchineiPand2003}, it follows that
	\begin{eqnarray}\label{critical-cone-unique-solution-2}
		du=\Pi_{\mathcal{T}_{C_{1}}(\bar{u})\cap\{A\bar{y}-b\}^{\perp}}(Ady+du),\ dv = \Pi_{\mathcal{T}_{C_{2}}(\bar{v})\cap\{D\bar{y}\}^{\perp}}(Ddy+dv),
	\end{eqnarray}
	which is equivalent to
	\begin{align}\label{critical-cone-unique-solution-3}
		Ady=\Pi_{\mathcal{N}_{C_{1}}(\bar{u})+\{A\bar{y}-b\}}(Ady+du),\ Ddy = \Pi_{\mathcal{N}_{C_{2}}(\bar{v})+\{D\bar{y}\}}(Ddy+dv).
	\end{align}
	Combing (\ref{critical-cone-unique-solution-2}), (\ref{critical-cone-unique-solution-3}), \eqref{eq:theorem5-eq-2} and \eqref{eq:theorem6-eq-1}, we obtain that $dy\in\mathcal{C}(\bar{y})$ and therefore $dy=0$, which is a contradiction.
	
	Conversely, assume the solution of the primal problem \eqref{primal-problem} is unique. Let $\bar{y}$ be this unique solution. Suppose that there exists a vector $dy\in\mathcal{C}(\bar{y})$ but $dy\neq0$. Therefore, we obtain
	\begin{eqnarray*}
		&&Ady\in\Big\{d\ \Big|\ h^{\downarrow}_{-}(A\bar{y}-b,d)=\langle \bar{u},d\rangle \Big\}=\mathcal{N}_{C_{1}}(\bar{u})+\{A\bar{y}-b\},\\
		&&Ddy\in\Big\{d\ \Big|\ q^{\downarrow}_{-}(D\bar{y},d)=\langle \bar{v},d\rangle \Big\}=\mathcal{N}_{C_{2}}(\bar{v})+\{D\bar{y}\}.
	\end{eqnarray*}
	Furthermore, due to the fact that $\Pi_{E}(z)=0$ if $E$ is a cone and $z\in E^{\circ}$, we have
	\begin{eqnarray*}
		\Pi_{C_{1}}(A(\bar{y}+dy)-b+\bar{u})&=&\Pi_{C_{1}}(A\bar{y}-b+\bar{u})+\Pi_{\mathcal{T}_{C_{1}}(\bar{u})\cap\{A\bar{y}-b\}^{\perp}}(Ady)=\bar{u},\\
		\Pi_{C_{2}}(D(\bar{y}+dy)+\bar{v})&=&\Pi_{C_{2}}(D\bar{y}+\bar{v})+\Pi_{\mathcal{T}_{C_{2}}(\bar{v})\cap\{D\bar{y}\}^{\perp}}(Ddy)=\bar{v},
	\end{eqnarray*}
	which implies that
	\begin{eqnarray*}
		A(\bar{y}+dy)-b\in\partial h^{*}(\bar{u}),\quad D(\bar{y}+dy)\in\partial q^{*}(\bar{v}),
	\end{eqnarray*}
	and $(\bar{y}+dy,\bar{u},\bar{v})$ is also a solution of the KKT system. This is a contradiction of the uniqueness of the primal solution. The desired result follows.
\end{proof}
Now we conclude the following result.
\begin{theorem}
	Suppose that Assumption \ref{assumption} is valid. For $h(\cdot) = \delta_{C}(\cdot)$, $q(\cdot)=\|\cdot\|_{1}$ or $q(\cdot)=\|\cdot\|_{\infty}$,
	the nondegeneracy condition (\ref{dual-nondegeneracy-cond}) of the dual problem holds if and only if that the solution of the primal problem \eqref{primal-problem} is unique.
\end{theorem}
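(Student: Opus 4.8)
The plan is to obtain the statement as an immediate concatenation of the three equivalences just established in this subsection, applied to a fixed solution triple $(\bar y,\bar u,\bar v)$ of the KKT system \eqref{composite-problem-kkt}. First I would invoke the Proposition asserting that, under Assumption \ref{assumption}, the SRCQ \eqref{dual-strict-robinson-qualification} of the dual problem holds at $(\bar u,\bar v)$ if and only if the dual nondegeneracy condition \eqref{dual-nondegeneracy-cond} holds there. Next I would apply Theorem \ref{SRCQ-critical-0}, which equates that SRCQ with the primal critical cone being trivial, i.e.\ $\mathcal{C}(\bar y)=\{0\}$. Finally, the last Proposition above states that $\mathcal{C}(\bar y)=\{0\}$ is equivalent to uniqueness of the primal solution $\bar y$. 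Chaining these three ``if and only if'' statements gives
\[
\eqref{dual-nondegeneracy-cond}\ \Longleftrightarrow\ \text{dual SRCQ}\ \Longleftrightarrow\ \mathcal{C}(\bar y)=\{0\}\ \Longleftrightarrow\ \bar y\ \text{is the unique primal solution},
\]
which is exactly the claim.

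The single place where care is needed — and where Assumption \ref{assumption} actually does work — is the first link: the SRCQ/nondegeneracy equivalence passes through the relative-interior descriptions of $\partial h(A\bar y-b)$ and $\partial q(D\bar y)$ furnished by Proposition \ref{proposition-ri}, whose proof uses $\mathcal{I}(\bar v)\neq\emptyset$; Assumption \ref{assumption} supplies $D\bar y\neq 0$, hence this nonemptiness. The other two links hold for $q=\|\cdot\|_1$ and $q=\|\cdot\|_\infty$ with no further hypotheses, so no new estimates or constructions are needed here.

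For completeness I would also remark that the theorem is implicitly about a KKT point: since a primal solution is assumed to exist (its uniqueness being the property at issue), the polyhedrality of $h$ and $q$ yields multipliers $(\bar u,\bar v)$ solving \eqref{composite-problem-kkt}, so the triple $(\bar y,\bar u,\bar v)$ demanded by the cited results is indeed at hand; moreover $\mathcal{C}(\bar y)$ and the uniqueness conclusion do not depend on the particular choice of $(\bar u,\bar v)$. The main obstacle thus lies entirely in the earlier groundwork; in this proof one need only verify that the hypotheses of each cited result are met, in particular that Assumption \ref{assumption} is carried into the SRCQ--nondegeneracy equivalence.
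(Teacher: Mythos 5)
Your proposal is correct and is exactly the argument the paper intends: the theorem is stated as a direct consequence of chaining the SRCQ--nondegeneracy equivalence, Theorem \ref{SRCQ-critical-0}, and the proposition equating $\mathcal{C}(\bar{y})=\{0\}$ with uniqueness of the primal solution. Your added remarks on where Assumption \ref{assumption} enters (via $D\bar{y}\neq 0$ and hence $\mathcal{I}(\bar{v})\neq\emptyset$ in Proposition \ref{proposition-ri}) and on the existence of the KKT triple are accurate and only make explicit what the paper leaves implicit.
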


\section{The computational issues of DCGM$\_$PALM}\label{sec:computing-issues}
In this section, we discuss some important computational issues related to our algorithm.
\begin{enumerate}
	\item One of the difficulties we need to conquer is how to deal with the storage problem of the constraint matrix and the dual variable. Due to the special structure of the constraint matrix $A$, we only need to know the column index triple of the three nonzeros for each row. For a given row index, we can find the column index triple of the corresponding nonzeros and the column index whose coefficient takes $-1$, while others take $1$.
	We do not need to store the constraint matrix $A$ at all. As the number of active constraints is much less than $n_{2}$ and most elements of the dual variable are zeros due to the complementary condition, the memory requirement is acceptable for sparse restoration. As for large sparse graphs, we also store the vectors $b$ and $\mbox{trivec}(W)$ in a sparse way.
	
	\item Before the iteration of DCGM$\_$PALM, it is important to choose an initial violated constraints set which is as small as possible but covers the true active constraints with high probability. Let $X$ be a feasible solution of the original problem (\ref{metric-nearness-problem}). Given indices $1\leq i<j<k\leq n$, it is easy to know that $x_{ij}\geq 0$ and there is only one violated constraint related to the triple $(x_{ij},x_{ik},x_{jk})$ if it exits. Fortunately, $y=0$ is a natural guess solution and we can find an initial violated constraint set including the constraints whose corresponding components of $b$ are less than 0.
	
	\item At the $k$-th iteration, we apply PALM to solve the reduced optimization problem with $n_{1}$ variables and $|S^{k}|$ constraints, which is still a huge problem when $n$ is large. Let $I^{k}$ be the set of the indices of variables involved in the constraints and $y^{k}$ be the approximate optimal solution of the subproblem (\ref{dcgm-palm-subproblem}). Since each constraint involves only three variables, for the variables which are not involved in the constraints we set $y^{k}_{\bar{I}^{k}}=0$. The corresponding subproblem (\ref{dcgm-palm-subproblem}) can be replaced by solving the following problem
	\begin{eqnarray*}
		y^{k}_{I^{k}}\approx\mathop{\mbox{argmin}}_{y_{I^{k}}\in\mathcal{R}^{I^{k}}}\Big\{h(A_{S^{k},I^{k}}y_{I^{k}}-b_{S^{k}})+p(Dy_{I^{k}})\Big\},
	\end{eqnarray*}
	which can reduce the computational cost rapidly for each subproblem (\ref{dcgm-palm-subproblem}) if $|I^{k}|<<n_{1}$.
	
	\item Another point we need to mention is the computational cost of checking feasibility. In the implementation, we divide the constraints into different groups and check the feasibility in parallel.
	\item Due to Remark \ref{rem:palm} and the results in Section \ref{sec:CQ-nondegeneracy-invertible}, we set $H_{k}\equiv10^{-3}I$ for the $\ell_{1}$ and $\ell_{\infty}$ norm based problems and $H_{k}\equiv 10^{-6}I$ for the $\ell_{2}$ norm based problem in the numerical implementation.
	
\end{enumerate}

\section{Numerical experiments}
\label{sec:Numerical issues}
In this section, we implement some numerical experiments to demonstrate the efficiency of our DCGM$\_$PALM for the $\ell_{p}$ norm based metric nearness problems. All of the numerical experiments are implemented on a Windows workstation (two 16-core, Intel Xeon E5-2667 @3.20GHz CPU, 64GB RAM). In the implementation, all of these algorithms are written in C++ except the PROJECT AND FORGET algorithm\footnote{https://www.dropbox.com/sh/lq5nnhi4je2lh89/AABUUW7k5z3lXTSm8x1hhN1Da?dl=0} which is originally written in Julia.

At each iteration of DCGM$\_$PALM, we adopt the following relative KKT residual
\begin{eqnarray*}
	\eta_{kkt}&:=&\max\left\{\frac{\|A_{S}^{T}u_{S}+v\|}{1+\|v\|},\frac{\|A_{S}y-b_{S}-\Pi_{\mathcal{R}^{|S|}_{-}}(A_{S}y-b_{S}+u_{S})\|}{1+\|A_{S}y-b_{S}\|},\right.\\&&\quad\quad\quad\left.\frac{\|Dy-\mbox{Prox}_{q}(Dy+v)\|}{1+\|Dy\|}\right\}
\end{eqnarray*}
and the relative gap
\begin{eqnarray*}
	R_{g}:=\frac{|\mbox{pobj}-\mbox{dobj}|}{1+|\mbox{dobj}|}
\end{eqnarray*}
to measure the accuracy of PALM. It is terminated if the desired relative KKT residual $\eta_{kkt}<\mbox{tol}:=10^{-4}$ and the relative gap $R_{g}<\mbox{tol}$, or the number of the iterations of PALM reaches the maximum of 1000. For the consistency of the stopping criteria of these algorithms, we also adopt the constraint feasibility $\max\{A_{S}y-b_{S}\}<10^{-2}$ as the stopping criterion for PALM. The outer iteration of DCGM$\_$PALM is stopped if the the constraint feasibility of all the constraints $\eta_{f}:=\max\{Ay-b\}<10^{-2}$ and the relative KKT residual of the problem with the whole constraints is less than $\mbox{tol}$.

As for the Gurobi package, we also apply the same DCGM for the triangle inequalities constraints as that used in our algorithm to improve the efficiency of Gurobi (DCGM$\_$Gurobi), while the constraints related to the reformulations of the $\ell_{1}$ or $\ell_{\infty}$ norm are always in the constraint set. We use the barrier algorithm (without presolve and crossover) with the tolerances  $\mbox{BarConvTol}<\mbox{tol}$ and $\mbox{FeasibilityTol}<10^{-2}$ to solve each subproblem. The same stopping criterion as that of DCGM$\_$PALM is adopted for the outer iteration of DCGM.

For consistency, we make a minor modification of the stopping criterion in \cite{Veldt2019} and adopt
\begin{eqnarray*}
	\max\left\{R_{g},\ \eta_{f}\right\}<10^{-2}
\end{eqnarray*}
as the stopping criterion
for Dykstra's projection method. As the original code\footnote{https://github.com/nveldt/ParallelDykstras} for Dykstra's projection method is for the $\ell_{1}$ norm based problem, we rewrite it in C++ for the $\ell_{1}$, $\ell_{2}$, $\ell_{\infty}$ norms based problems and the same parallelization technology mentioned in \cite{Ruggles2020} is applied to improve the efficiency of Dykstra's projection method.
The parameters for the PROJECT AND FORGET algorithm are set as default.

In our experiments, we make the comparisons on some graphs with dissimilarity matrices derived from real-world networks which are from the SuiteSparse Matrix Collection \cite{Davis2011} and the SNAP repository \cite{Leskovec2014}. Before experiments, the edge weights are removed such that the networks are undirected. Then we find the largest connected component for further use. We follow the same approach in \cite{Veldt2019} to obtain the dissimilarity matrix $\widetilde{X}$ and the corresponding weight matrix $W$ with each element being nonzero.

In the following subsections, we report the name of data (Graph), the parameters $(n,n_{e})$ for the numbers of vertices and edges, the relative KKT residual $\eta_{kkt}$ (for DCGM$\_$PALM), the relative gap $R_{g}$ and the constraint feasibility $\eta_{f}$ for the problem with the whole constraints, the primal objective value (pobj), the iteration number (iter) and the computing time (time) in the format of hours:minutes:seconds.
\begin{table}[htbp]
\tiny
\parbox{\textwidth}{\caption{The performances of PALM, DCGM$\_$PALM, Gurobi, DCGM$\_$Gurobi for the  metric nearness problem. }\label{test_DCGM}}
\resizebox{\linewidth}{38mm}{
\centering
\begin{tabular}{lcc|cc|cc|cc}
\hline
\multicolumn{1}{l}{\multirow{1}{*}{Graph}} & \multicolumn{2}{c|}{PALM}                             & \multicolumn{2}{c|}{DCGM$\_$PALM}                        & \multicolumn{2}{c|}{Gurobi}            & \multicolumn{2}{c}{DCGM$\_$Gurobi}       \\ \cline{2-9}
\multicolumn{1}{l}{$(n,n_{e})$}                        & time                     & pobj                       & time(iter)                     & pobj                       & time              & pobj               & time(iter)              & pobj               \\ \hline
$\ell_{1}$ norm based problems\\\hline
jazz                                        & \multirow{2}{*}{0:04:48} & \multirow{2}{*}{2.46e+02} & \multirow{2}{*}{0:00:18(2)} & \multirow{2}{*}{2.46e+02} & \multirow{2}{*}{0:07:44} & \multirow{2}{*}{2.46e+02} & \multirow{2}{*}{0:01:13(2)} & \multirow{2}{*}{2.46e+02} \\
$(198,2742)$                                       &                          &                           &                          &                           &                   &                   &                   &                  \\
SmallW                                      & \multirow{2}{*}{0:09:48} & \multirow{2}{*}{7.88e+02} & \multirow{2}{*}{0:01:40(4)} & \multirow{2}{*}{7.88e+02} & \multirow{2}{*}{0:14:45} & \multirow{2}{*}{7.87e+02} & \multirow{2}{*}{0:06:02(4)} & \multirow{2}{*}{7.88e+02} \\
$(233,994)$                                       &                          &                           &                          &                           &                   &                   &                   &                   \\
celegansneural                              & \multirow{2}{*}{0:16:43} & \multirow{2}{*}{7.14e+02} & \multirow{2}{*}{0:01:47(3)} & \multirow{2}{*}{7.14e+02} & \multirow{2}{*}{0:54:33} & \multirow{2}{*}{7.14e+02} & \multirow{2}{*}{0:22:37(3)} & \multirow{2}{*}{7.14e+02} \\
$(297,2148)$                                        &                          &                           &                          &                           &                   &                   &                   &                  \\
USAir97                                     & \multirow{2}{*}{0:22:59} & \multirow{2}{*}{7.32e+02} & \multirow{2}{*}{0:01:05(3)} & \multirow{2}{*}{7.32e+02} & \multirow{2}{*}{0:43:34} & \multirow{2}{*}{7.32e+02} & \multirow{2}{*}{0:11:54(3)} & \multirow{2}{*}{7.32e+02} \\
$(332,2126)$                                      &                          &                           &                          &                           &                   &                   &                   &                  \\ \hline $\ell_{2}$ norm based problems\\\hline
jazz                                        & \multirow{2}{*}{0:04:35} & \multirow{2}{*}{1.19e+01} & \multirow{2}{*}{0:00:20(4)} & \multirow{2}{*}{1.19e+01} & \multirow{2}{*}{1:08:25} & \multirow{2}{*}{1.17e+01} & \multirow{2}{*}{8:38:29(4)} & \multirow{2}{*}{1.19e+01} \\
$(198,2742)$                                       &                          &                           &                          &                           &                   &                   &                   &                  \\
SmallW                                      & \multirow{2}{*}{0:07:48} & \multirow{2}{*}{1.74e+01} & \multirow{2}{*}{0:00:48(5)} & \multirow{2}{*}{1.74e+01} & \multicolumn{2}{c|}{\multirow{2}{*}{out of memory}} & \multicolumn{2}{c}{\multirow{2}{*}{out of memory}} \\
$(233,994)$                                       &                          &                           &                          &                           &                   &                   &                   &                   \\
celegansneural                              & \multirow{2}{*}{0:15:37} & \multirow{2}{*}{1.73e+01} & \multirow{2}{*}{0:00:42(2)} & \multirow{2}{*}{1.73e+01} & \multicolumn{2}{c|}{\multirow{2}{*}{out of memory}} & \multirow{2}{*}{17:39:14(2)} & \multirow{2}{*}{1.73e+01} \\
$(297,2148)$                                        &                          &                           &                          &                           &                   &                   &                   &                  \\
USAir97                                     & \multirow{2}{*}{0:22:09 } & \multirow{2}{*}{1.80e+01} & \multirow{2}{*}{0:00:33(2)} & \multirow{2}{*}{1.80e+01} & \multicolumn{2}{c|}{\multirow{2}{*}{out of memory}} & \multicolumn{2}{c}{\multirow{2}{*}{out of memory}} \\
$(332,2126)$                                      &                          &                           &                          &                           &                   &                   &                   &                  \\ \hline

$\ell_{\infty}$ norm based problems\\\hline
jazz                                        & \multirow{2}{*}{0:04:24} & \multirow{2}{*}{7.49e-02} & \multirow{2}{*}{0:00:26(4)} & \multirow{2}{*}{7.49e-02} & \multirow{2}{*}{0:05:30} & \multirow{2}{*}{7.45e-02} & \multirow{2}{*}{0:32:21(10)} & \multirow{2}{*}{7.50e-02} \\
$(198,2742)$                                       &                          &                           &                          &                           &                   &                   &                   &                  \\
SmallW                                      & \multirow{2}{*}{0:07:13} & \multirow{2}{*}{8.31e-02} & \multirow{2}{*}{0:00:38(5)} & \multirow{2}{*}{8.32e-02} & \multirow{2}{*}{0:08:41} & \multirow{2}{*}{8.29e-02} & \multirow{2}{*}{0:14:42(5)} & \multirow{2}{*}{8.31e-02} \\
$(233,994)$                                       &                          &                           &                          &                           &                   &                   &                   &                   \\
celegansneural                              & \multirow{2}{*}{0:15:36} & \multirow{2}{*}{7.60e-02} & \multirow{2}{*}{0:01:01(4)} & \multirow{2}{*}{7.60e-02} & \multirow{2}{*}{0:23:43} & \multirow{2}{*}{7.57e-02} & \multirow{2}{*}{3:17:53(17)} & \multirow{2}{*}{7.60e-02} \\
$(297,2148)$                                        &                          &                           &                          &                           &                   &                   &                   &                  \\
USAir97                                     & \multirow{2}{*}{0:21:50} & \multirow{2}{*}{8.31e-02} & \multirow{2}{*}{0:00:51(5)} & \multirow{2}{*}{8.32e-02} & \multirow{2}{*}{0:39:49} & \multirow{2}{*}{8.30e-02} & \multirow{2}{*}{3:02:28(11)} & \multirow{2}{*}{8.31e-02} \\
$(332,2126)$                                      &                          &                           &                          &                           &                   &                   &                   &                  \\ \hline

\end{tabular}
}
\end{table}

We first test the efficiency of DCGM. The performances of PALM, \\DCGM$\_$PALM, Gurobi, and DCGM$\_$Gurobi are listed in Table \ref{test_DCGM}. All of the  experiments in Table \ref{test_DCGM}  achieve the specified accuracy requirements unless being out of memory. From the comparison, for the $\ell_{1}$ norm based problem,  we see that the DCGM based algorithms are more efficient and we will only adopt the DCGM based algorithms for comparisons in the following numerical experiments. For the $\ell_{2}$ norm based problem, both Gurobi and DCGM$\_$Gurobi run out of memory when $n>300$. Therefore we only compare DCGM$\_$PALM with Dykstra's projection method in the numerical experiments of the $\ell_{2}$ norm based metric nearness problem.
For the $\ell_{\infty}$ norm based problem, the computing time of DCGM$\_$PALM is much less than that of PALM and we will only implement DCGM$\_$PALM in the numerical experiments. Although Gurobi runs better than DCGM$\_$Gurobi, we still adopt DCGM$\_$Gurobi for comparison due to the fact that the memory requirement of Gurobi grows quickly with the increasing of $n$.

\subsection{Numerical experiments for the $\ell_{1}$ norm based metric nearness problem}
In this section, we perform some numerical experiments of Dykstra's projection method, Gurobi and our proposed algorithm for the $\ell_{1}$ norm based metric nearness problem. It is known from \cite{Veldt2019} that the linear programming relaxation for
correlation clustering is equivalent to the $\ell_{1}$ norm based metric nearness problem.

\subsubsection{Solving the $\ell_{1}$ norm based metric nearness problem (\ref{primal-problem}) by linear programming}
The $\ell_{1}$ norm based metric nearness problem is a polyhedral problem and can be rewritten as a linear programming problem. By introducing some slack variables, we can reformulate problem (\ref{primal-problem}) to the following form.
\begin{eqnarray}\label{primal-problem-lp}
	&&\min_{z\in\mathcal{R}^{2n_{1}}}\left\{\langle c,z\rangle\ \left|\ \widetilde{A}z\leq\widetilde{b}\right.\right\},
\end{eqnarray}
where $z=[y;\tilde{y}]$, $c=[\textbf{0}_{n_{1}};\mbox{trivec}(W)]$, $\textbf{0}_{n_{1}}$ denotes an $n_{1}$-dimensional zero column vector and
\begin{eqnarray*}
	\widetilde{A}=\left[\begin{array}{cc}A&0\\I_{n_{1}}&-I_{n_{1}}\\-I_{n_{1}}&-I_{n_{1}}\end{array}\right],\quad \widetilde{b}=\left[\begin{array}{l}b\\\textbf{0}_{n_{1}}\\\textbf{0}_{n_{1}}\end{array}\right].
\end{eqnarray*}
The KKT condition of problem (\ref{primal-problem-lp}) takes the following form
\begin{eqnarray*}
	\widetilde{A}^{T}u+c=0,\quad u\in\mathcal{N}_{\mathcal{R}^{2n_{1}+n_{2}}_{-}}(\widetilde{A}z-\widetilde{b}).
\end{eqnarray*}


As we know, there exist many optimization software packages, e.g., Gurobi, which is an ideal solver to deal with the above linear programming problem. Hence, we use the Gurobi package for comparison.

\subsubsection{Dykstra's projection method}
Dykstra's projection methods cannot be applied directly to solve linear programming problems and we need to add an appropriate regularization term to the objective function of the linear programming problem (\ref{primal-problem-lp}). Consider the following quadric programming problem
\begin{eqnarray}\label{projection-method-l1}
\min_{z\in\mathcal{R}^{2n_{1}}}\left\{\langle c,z\rangle+\frac{1}{2\gamma}z^{T}\widetilde{D}z\ \left|\ \widetilde{A}z\leq\widetilde{b}\right.\right\},
\end{eqnarray}
where $\gamma>0,\,\widetilde{D}=\mbox{diag}([\mbox{trivec}(W);\mbox{trivec}(W)])$.
The dual problem of (\ref{projection-method-l1}) is
\begin{eqnarray*}
\min_{u\in\mathcal{R}^{2n_{1}+n_{2}}_{+}} \Big\{\frac{\gamma}{2}(\widetilde{A}^{T}u+c)^{T}\widetilde{D}^{-1}(\widetilde{A}^{T}u+c)+\langle\widetilde{b},u\rangle\Big\}
\end{eqnarray*}
and the corresponding KKT system takes the following form
\begin{eqnarray*}
\frac{1}{\gamma}\widetilde{D}z+\widetilde{A}^{T}u+c=0,\ \widetilde{A}z-\widetilde{b}\leq 0,\ u\geq 0,\ \langle u,\widetilde{A}z-\widetilde{b}\rangle=0.
\end{eqnarray*}

As proved in \cite{Mangasarian1984}, there exists $\gamma_{0}>0$ such that for all $\gamma>\gamma_{0}$ the optimal solution of the quadratic programming problem (\ref{projection-method-l1}) is the minimum $\ell_{2}$ norm solution of the original linear programming problem (\ref{primal-problem-lp}) when $\widetilde{D}=I_{2n_{1}}$. The function $f(z)=\delta_{\mathcal{R}^{2n_{1}}_{-}}(\widetilde{A}z-\widetilde{b})+\langle c,z\rangle$ is polyhedral. For a convex closed polyhedral function $f$, its subdifferential has an interesting property called the staircase property (see e.g., Section 6 of \cite{Eckstein1992}), i.e, there exists $\delta>0$ such that
\begin{eqnarray*}
t\in\partial f(z),\ \|t\|\leq\delta\ \Rightarrow\ 0\in\partial f(z).
\end{eqnarray*}
Let $\bar{z}$ be an optimal solution of problem (\ref{projection-method-l1}), then we have
$\frac{1}{\gamma}\widetilde{D}\bar{z}\in\partial f(\bar{z})$. It is known that both solution sets of (\ref{primal-problem-lp}) and (\ref{projection-method-l1}) are bounded.  Therefore for a general weight matrix, there also exists $\gamma_{1}>0$ such that for all $\gamma>\gamma_{1}$ the solution of the quadratic programming problem (\ref{projection-method-l1}) is also a solution of the original problem.

Due to the difficulty of determining the parameter $\gamma_{1}$ exactly and the unknown dual variables $u$ and $v$ such that the KKT residual of the original problem is undetermined, we set $\gamma=1$, which is the same as that in \cite{Veldt2019}, in the numerical experiments.
\begin{landscape}
	
\begin{table}[htbp]
\tiny
\centering
\parbox{1.4\textwidth}{\caption{The performances of DCGM$\_$PALM, DCGM$\_$Gurobi, Dykstra's projection method and PROJECT AND FORGET for the $\ell_{1}$ norm based metric nearness problem.}\label{l1-norm-results}}
\resizebox{1.5\textwidth}{17mm}{
\begin{tabular}{|l|ccccc|ccccc|cccc|cc|}
\hline
\multicolumn{1}{|l}{Graph} & \multicolumn{5}{|c}{DCGM$\_$PALM} & \multicolumn{5}{|c}{DCGM$\_$Gurobi} & \multicolumn{4}{|c}{Dykstra's projection method}& \multicolumn{2}{|c|}{PROJECT AND FORGET} \\ \cline{2-17}
\multicolumn{1}{|l|}{($n$,$n_{e}$)} & $\eta_{kkt}$ & $R_{g}$ & $\eta_{f}$ & pobj  & time(iter) & $\eta_{kkt}$ & $R_{g}$ & $\eta_{f}$ & pobj  & time(iter) & $R_{g}$ & $\eta_{f}$ & pobj & time(iter)             & pobj & time(iter)  \\ \hline
caGrQc & \multirow{2}{*}{6.39e-05} & \multirow{2}{*}{2.74e-06} & \multirow{2}{*}{4.38e-03} & \multirow{2}{*}{2.88e+03}         &\multirow{2}{*}{0:12:38(4)} &   \multicolumn{5}{c|}{\multirow{2}{*}{out of memory}}  &\multirow{2}{*}{7.23e-05}  &\multirow{2}{*}{9.80e-03} &\multirow{2}{*}{3.07e+03} & \multirow{2}{*}{1:13:15(270)} &  \multirow{2}{*}{4.04+03} & \multirow{2}{*}{1:28:23(99)}\\
(4158,13422) & & & & & & & & & & & & & & & &  \\\hline
power & \multirow{2}{*}{9.05e-05} &\multirow{2}{*}{1.74e-07} & \multirow{2}{*}{2.38e-03} & \multirow{2}{*}{2.01e+03}  & \multirow{2}{*}{0:03:29(5)} &\multirow{2}{*}{2.57e-07} &\multirow{2}{*}{5.54e-05} &\multirow{2}{*}{1.12e-04} & \multirow{2}{*}{2.01e+03} & \multirow{2}{*}{0:05:16(4)} &\multirow{2}{*}{4.62e-05}  &\multirow{2}{*}{8.62e-03} & \multirow{2}{*}{2.15e+03} & \multirow{2}{*}{1:03:24(150)} & \multirow{2}{*}{2.59+03} & \multirow{2}{*}{0:39:30(26)}  \\
(4941,6594) & & & & & & & & & & & & & & & & \\\hline
caHepTh & \multirow{2}{*}{8.80e-05} & \multirow{2}{*}{2.44e-06} & \multirow{2}{*}{9.24e-03} & \multirow{2}{*}{7.05e+03}
& \multirow{2}{*}{1:12:05(4)}  & \multicolumn{5}{c|}{\multirow{2}{*}{out of memory}} &\multirow{2}{*}{6.05e-05}  &\multirow{2}{*}{9.26e-03} & \multirow{2}{*}{7.39e+03} & \multirow{2}{*}{14:25:46(310)}      & \multicolumn{2}{c|}{\multirow{2}{*}{out of memory}}   \\
(8638,24806) & & & & & & & & & & & & & & & & \\\hline
caHepPh & \multirow{2}{*}{6.51e-05} & \multirow{2}{*}{2.10e-06} & \multirow{2}{*}{5.60e-03}  & \multirow{2}{*}{1.39e+04}  & \multirow{2}{*}{7:21:37(6)} & \multicolumn{5}{c|}{\multirow{2}{*}{out of memory}}  &\multirow{2}{*}{8.10e-05}  &\multirow{2}{*}{9.80e-03} & \multirow{2}{*}{1.39e+04} & \multirow{2}{*}{34:26:56(320)}     & \multicolumn{2}{c|}{\multirow{2}{*}{out of memory}}  \\
(11204,117619) & & & & & & & & & & & & & & & & \\\hline
caAstroPh & \multirow{2}{*}{3.40e-05} & \multirow{2}{*}{3.24e-06} & \multirow{2}{*}{8.31e-03} & \multirow{2}{*}{2.93e+04} & \multirow{2}{*}{19:56:56(4)} & \multicolumn{5}{c|}{\multirow{2}{*}{out of memory}} &\multirow{2}{*}{8.08e-05}  &\multirow{2}{*}{8.44e-03} & \multirow{2}{*}{3.11e+04} & \multirow{2}{*}{180:15:39(370)} & \multicolumn{2}{c|}{\multirow{2}{*}{out of memory}} \\
(17903,196972)& & & & & & & & & & & & & & & & \\\hline
\end{tabular}
}
\end{table}

\begin{table}[htbp]
\tiny
\centering
\parbox{1.1\textwidth}{\caption{The performances of DCGM$\_$PALM and Dykstra's projection method for the $\ell_{2}$ norm based metric nearness problem.}\label{l2-norm-results}}
\begin{tabular}{|l|ccccc|cccc|}
\hline
\multicolumn{1}{|l}{Graph} & \multicolumn{5}{|c}{DCGM$\_$PALM} & \multicolumn{4}{|c|}{Dykstra's projection method} \\ \cline{2-10}
\multicolumn{1}{|l|}{($n$,$n_{e}$)} & $\eta_{kkt}$ & $R_{g}$ & $\eta_{f}$ & pobj  & time(iter) & $R_{g}$ & $\eta_{f}$ & pobj & time(iter)             \\ \hline
caGrQc &\multirow{2}{*}{9.49e-05}  &\multirow{2}{*}{1.04e-06}  &\multirow{2}{*}{2.95e-03}         & \multirow{2}{*}{3.65e+01}         &\multirow{2}{*}{0:02:40(2)}  &\multirow{2}{*}{1.99e-04}  &\multirow{2}{*}{9.78e-03}  & \multirow{2}{*}{3.65e+01}  &  \multirow{2}{*}{0:14:43(60)} \\
(4158,13422) & & &  & & & & & &  \\\hline
power &\multirow{2}{*}{5.58e-05} &\multirow{2}{*}{6.99e-07} &\multirow{2}{*}{7.70e-03} & \multirow{2}{*}{2.92e+01} & \multirow{2}{*}{0:01:24(2)}  &\multirow{2}{*}{5.99e-04} &\multirow{2}{*}{5.64e-03} & \multirow{2}{*}{2.92e+01} & \multirow{2}{*}{0:16:46(40)}  \\
(4941,6594) & & &  & & & & & & \\\hline
caHepTh &\multirow{2}{*}{8.30e-05} &\multirow{2}{*}{1.04e-06} &\multirow{2}{*}{2.85e-03} & \multirow{2}{*}{5.68e+01}
& \multirow{2}{*}{0:11:09(2)}   &\multirow{2}{*}{5.72e-05} &\multirow{2}{*}{9.60e-03} & \multirow{2}{*}{5.68e+01} & \multirow{2}{*}{2:48:32(60)}        \\
(8638,24806) & & &  & & & & & & \\\hline
caHepPh &\multirow{2}{*}{6.83e-05} &\multirow{2}{*}{3.73e-06} &\multirow{2}{*}{2.20e-03} & \multirow{2}{*}{8.35e+01}  & \multirow{2}{*}{0:48:02(3)}   &\multirow{2}{*}{6.94e-04} &\multirow{2}{*}{6.90e-03} & \multirow{2}{*}{8.35e+01} & \multirow{2}{*}{8:12:45(70)}     \\
(11204,117619) & &  & & & & & & & \\\hline
caAstroPh &\multirow{2}{*}{8.34e-05} &\multirow{2}{*}{5.62e-07} &\multirow{2}{*}{4.91e-03} & \multirow{2}{*}{1.20e+02} & \multirow{2}{*}{2:00:26(3)}  &\multirow{2}{*}{5.66e-05} &\multirow{2}{*}{8.92e-03} & \multirow{2}{*}{1.20e+02} & \multirow{2}{*}{66:43:53(120)} \\
(17903,196972)& &  & & & & & & & \\\hline
\end{tabular}
\end{table}

\end{landscape}
\subsubsection{Numerical results for the $\ell_{1}$ norm based metric nearness problem}
In this subsection, we make the comparisons of DCGM$\_$PALM, DCGM$\_$Gurobi, Dykstra's projection method and PROJECT AND FORGET. It can be shown from Table \ref{l1-norm-results} that although DCGM$\_$PALM and Dykstra's projection method can obtain the desired results with the corresponding  stopping criteria, The objective value obtained by DCGM$\_$PALM is less than that obtained by Dykstra's projection method for almost all the data sets. The basic reason lies in the extra proximal term. Although the cost of checking feasibility of the whole constraints is $O(n^{3})$, the number of the constraint generation iterations of DCGM$\_$PALM is much less than that of Dykstra's projection method.

Furthermore, we can solve each subproblem of DCGM$\_$PALM efficiently due to the structure of the problem. Since there are only 3 nonzero elements in each row of the constraint matrix, the number of the variables involved in the constraint set $S$ is less than $n_{1}$ in each subproblem. We take the data set caHepPh as an example. The sizes of the constraint sets of the subproblems are 18877829, 37755658, 56648464, 84998076, 169656800 and 171628352, respectively. While the numbers of the variables involved in the corresponding subproblems are 3304117, 3600822, 4105644, 4896376, 6164352 and 6208620, respectively, which are much less than $n_{1}=62759206$.


In Table \ref{l1-norm-results}, we can see that DCGM$\_$Gurobi can only solve the second data problem, while PROJECT AND FORGET can only solve the first two data problems. For other data problems, both solvers run out of memory. Although Dykstra's projection method can solve all the data problems, its computing time is much more than that of DCGM$\_$PALM.

We also test some graphs with the number of nodes larger than $2\times 10^{4}$ and the number of the constraints up to $10^{13}$ . The number of the total constraints (no$\_$cons), the largest size of the constraint set (active$\_$cons) of DCGM$\_$PALM, the computing time of DCGM and checking feasibility of the constraints (DCGM+FEAS), the computing time of solving the subproblems (PALM) are also listed in Table \ref{l1-norm-large-graph}. The graphs are sparse and the size of the constraint set in DCGM is much less than  $n_{2}$.  Due to the sparse restoration, the maximum memory requirement for the $\ell_{1}$ norm based problems related to these data sets is less than 45G.

\begin{table}[htbp]
\tiny
\centering
\parbox{0.9\textwidth}{\caption{The performances of DCGM$\_$PALM for the $\ell_{1}$ norm based metric nearness problem of large data sets.}\label{l1-norm-large-graph}}
\resizebox{\textwidth}{12mm}{
\begin{tabular}{lcccccccr}
\hline
Graph &\multirow{2}{*}{no$\_$cons}&\multirow{2}{*}{ active$\_$cons}& \multirow{2}{*}{iter}& \multirow{2}{*}{$\eta_{kkt}$} & \multirow{2}{*}{$R_{g}$} & \multirow{2}{*}{$\eta_{f}$} & \multirow{2}{*}{pobj} & \multirow{2}{*}{time(DCGM+FEAS $|$ PALM)} \\
($n$,$n_{e}$) & & & & & & & &\\ \hline
CA-CondMat&\multirow{2}{*}{4.9e+12}&\multirow{2}{*}{21303932} &\multirow{2}{*}{5} &\multirow{2}{*}{3.21e-05}  &\multirow{2}{*}{3.78e-06}  & \multirow{2}{*}{9.19e-03}  & \multirow{2}{*}{2.48e+04}         &\multirow{2}{*}{14:28:09(3:02:52 $|$ 11:25:17)}  \\
(21363,91342) & & & & & & & & \\\hline
p2p-Gnutella25&\multirow{2}{*}{5.8e+12}&\multirow{2}{*}{8140970} &\multirow{2}{*}{4} &\multirow{2}{*}{1.06e-05} &\multirow{2}{*}{9.74e-07} &\multirow{2}{*}{2.48e-03} & \multirow{2}{*}{1.98e+04} & \multirow{2}{*}{6:08:29(3:02:36 $|$ 3:05:53)}  \\
(22663,54693) & & & & & & & &\\\hline
p2p-Gnutella30&\multirow{2}{*}{2.5e+13}&\multirow{2}{*}{13198004} &\multirow{2}{*}{4} &\multirow{2}{*}{5.62e-05} &\multirow{2}{*}{6.36e-07} &\multirow{2}{*}{7.93e-03} & \multirow{2}{*}{3.15e+04} & \multirow{2}{*}{22:19:22(12:59:35 $|$ 9:19:47)}  \\
(36646,88303) & & & & & & & & \\\hline
\end{tabular}
}
\end{table}

\subsection{Numerical experiments for the $\ell_{2}$ norm based metric nearness problem}

\begin{table}[htbp]
\tiny
\centering
\parbox{.9\textwidth}{\caption{The performances of DCGM$\_$PALM for the $\ell_{2}$ norm based metric nearness problem of large data sets.}\label{l2-norm-large-graph}}
\resizebox{\textwidth}{12mm}{
\begin{tabular}{lcccccccr}
\hline
Graph &\multirow{2}{*}{no$\_$cons}&\multirow{2}{*}{ active$\_$cons}& \multirow{2}{*}{iter}& \multirow{2}{*}{$\eta_{kkt}$} & \multirow{2}{*}{$R_{g}$} & \multirow{2}{*}{$\eta_{f}$} & \multirow{2}{*}{pobj} & \multirow{2}{*}{time(DCGM+FEAS $|$ PALM)} \\
($n$,$n_{e}$) & & & & & & & &\\ \hline
CA-CondMat&\multirow{2}{*}{4.9e+12}&\multirow{2}{*}{185711148} &\multirow{2}{*}{3} &\multirow{2}{*}{7.44e-05}  &\multirow{2}{*}{3.58e-07}  & \multirow{2}{*}{3.33e-03}        & \multirow{2}{*}{1.08e+02}         &\multirow{2}{*}{2:31:08(2:06:13 $|$ 0:24:55)}  \\
(21363,91342) & & & & & & & & \\\hline
p2p-Gnutella25&\multirow{2}{*}{5.8e+12}&\multirow{2}{*}{7736507} &\multirow{2}{*}{2} &\multirow{2}{*}{9.45e-05} &\multirow{2}{*}{1.31e-06} &\multirow{2}{*}{4.69e-03} & \multirow{2}{*}{9.44e+01} & \multirow{2}{*}{2:15:29(1:54:12 $|$ 0:21:17)}  \\
(22663,54693) & & & & & & & & \\\hline
p2p-Gnutella30&\multirow{2}{*}{2.5e+13}&\multirow{2}{*}{12427811} &\multirow{2}{*}{2} &\multirow{2}{*}{9.72e-05} &\multirow{2}{*}{5.16e-07} &\multirow{2}{*}{7.93e-03} & \multirow{2}{*}{1.19e+02} & \multirow{2}{*}{8:32:33(7:56:50 $|$ 0:35:43)}  \\
(36646,88303) & & & & & & & &\\\hline
\end{tabular}
}
\end{table}

In this section, we perform some numerical experiments for the $\ell_{2}$ norm based metric nearness problem. Since Dykstra's projection method and the Gurobi package can only be applied to solve the quadratic programming problem, we reformulate the $\ell_{2}$ norm based metric nearness problem equivalently to the problem with the objective function being the square of the $\ell_{2}$ norm. We have tested DCGM$\_$PALM on several data sets. The computing time of DCGM$\_$PALM for the original problem is less than that for the squared $\ell_{2}$ norm problem. The reason may lie in that the minimal value of the weight matrix is very small, which may lead to ill-conditioning of the inner subproblem. Therefore, we only implement the numerical experiments of DCGM$\_$PALM for the original problem in the following implementation. The corresponding objective values of the original problem are listed in Tables \ref{l2-norm-results} and \ref{l2-norm-large-graph}.

As for the Gurobi package,
we also apply DCGM and adopt the same stopping criterion as that of DCGM$\_$PALM. As shown in Table \ref{test_DCGM}, both Gurobi and DCGM$\_$Gurobi
run out of memory for the data sets with $n>200$. We only compare our DCGM$\_$PALM with Dykstra's projection method for the $\ell_{2}$ norm based metric nearness problem. 

Both of the algorithms can solve the $\ell_{2}$ norm based metric nearness problem with the given corresponding stopping criterion,
however, the results in Table \ref{l2-norm-results} demonstrate that our DCGM$\_$PALM is much more efficient. The number of iterations for DCGM is small (less than 5), which means we only need to take a few  iterations with each iteration $O(n^{3})$ cost to check the feasibility of the constraints.

The performance results of DCGM$\_$PALM for the $\ell_{2}$ norm based metric nearness problem with other large data sets are listed in Table \ref{l2-norm-large-graph}.
The computing time of solving the subproblems is much less than that of the delayed constraint generation and checking feasibility of constraints reveals that PALM can solve each subproblem efficiently. The high efficiency is due to DCGM and the SsN based PALM. We take the data set p2p-Gnutella30 as an example. The numbers of the constraints for the two subproblems are 12347944 and 12427811, respectively, while the total number of the constraints is $24604479499740$. The numbers of the variables involved in the corresponding subproblems are
11169993 and 11186321, respectively, while the total number of the variables is $671446335$. The maximum memory requirement is less than 30G.

\subsection{Numerical experiments for the $\ell_{\infty}$ norm based metric nearness problem}

In this section, we perform some numerical experiments of Dykstra's projection method, DCGM$\_$Gurobi and our proposed algorithm for the $\ell_{\infty}$ norm based metric nearness problem.

We first reformulate the $\ell_{\infty}$ norm based metric nearness problem as a linear programming problem. By introducing some slack variables, we can write out the problem (\ref{primal-problem}) as the following form.
\begin{eqnarray}\label{primal-problem-lp-infty}
\min_{z\in\mathcal{R}^{n_{1}+1}}\left\{\langle \widehat{c},z\rangle\ \left|\ \widehat{A}z\leq\widehat{b}\right.\right\},
\end{eqnarray}
where $\widehat{c}=[\textbf{0}_{n_{1}};1]$ and

\begin{align*}
\widehat{A}=\left[\begin{array}{cc}A&\textbf{0}_{n_{2}}\\I_{n_{1}}&-\textbf{1}_{n_{1}}\\-I_{n_{1}}&-\textbf{1}_{n_{1}}\end{array}\right],\quad \widehat{b}=\left[\begin{array}{l}b\\\textbf{0}_{n_{1}}\\\textbf{0}_{n_{1}}\end{array}\right],
\end{align*}
where $\textbf{1}_{n_{1}}$ is an $n_{1}$-dimensional column vector with all elements being ones. When we apply Dykstra's projection method to solve problem (\ref{primal-problem-lp-infty}), we need to approximate problem (\ref{primal-problem-lp-infty}) by the following quadratic programming problem
\begin{eqnarray}\label{projection-method-infty}
\min_{z\in\mathcal{R}^{n_{1}+1}}\left\{\langle \widehat{c},z\rangle+\frac{1}{2\gamma}z^{T}\widehat{D}z\ \left|\ \widehat{A}z\leq\widehat{b}\right.\right\},
\end{eqnarray}
where $\widehat{D}=\mbox{diag}([\mbox{trivec}(W);1])$.

\begin{table}[t]
\parbox{\textwidth}{\caption{The performances of Dykstra's projection method for the $\ell_{\infty} $ norm based metric nearness problem with different $\gamma$. }\label{project_small}}
\centering
\begin{tabular}{lcccccr}
\hline
Graph                       & Gurobi$\_$obj                & $\gamma$ & obj      & ratio    & iter & time \\ \hline
 & \multirow{5}{*}{8.23e-02} & 1     & 7.00e-01 & 849.74\% & 55        & 0:00:09       \\
   Harvard500      &              & 10    & 5.77e-01 & 700.62\% & 115       & 0:00:19       \\
    $n$ = 500         &          & 100   & 3.97e-01 & 482.19\% & 1555      & 0:04:17     \\
     $n_{e}$ = 2043   &           & 200   & 3.20e-01 & 388.40\% & 3175      & 0:08:10     \\
                &           & 500   & 2.60e-01 & 316.23\% & 10560     & 0:25:39 \\\hline
& \multirow{5}{*}{8.29e-02}     & 1     & 3.02e-01 &  364.62\%        & 40        & 0:00:43       \\
email   &               & 10    & 2.59e-01 &  312.34\%        & 165       & 0:02:54      \\
$n$ = 1133   &          & 100   & 2.02e-01 & 244.12\%         & 1460      & 0:25:37     \\
$n_{e}$ = 5451         &            & 200   & 1.81e-01 &  217.06\%        & 3320      & 0:58:23     \\
          &                & 500   & 1.48e-01 &  178.80\%        & 10330     & 2:55:34 \\ \hline
\end{tabular}
\end{table}

In order to set an appropriate parameter $\gamma$ such that the solution of the quadratic programming problem (\ref{projection-method-infty}) is close enough to the original problem (\ref{primal-problem-lp-infty}), we test several different parameters $\gamma$ for some small scale data sets. In Table \ref{project_small}, in addition to some indices introduced previously, we also list the objective value (Gurobi$\_$obj), which is obtained by the Gurobi package with no constraint generation strategy, the value of the parameter $\gamma$ ($\gamma$), and the ratio of the objective value obtained by Dykstra's projection method and that obtained by Gurobi (ratio). We can see that the parameter $\gamma$ should be relatively larger (be greater than 500) such that the original objective value obtained by Dykstra's projection method is relatively closer to the true objective value. However the number of the iterations grows rapidly as $\gamma$ increases and so does the computing time. For the trade-off, we set $\gamma=500$ in the following comparison.

We present the performance results of DCGM$\_$PALM, DCGM$\_$Gurobi and Dykstra's projection method for the $\ell_{\infty}$ norm based metric nearness problem in Table \ref{linfty-norm-results}.
As is seen from Table \ref{project_small}, for Dykstra's projection method, the number of the iterations is very large and it takes a lot of time.
Hence in Table \ref{linfty-norm-results},
We only list the computing time of one iteration (time$\_$per$\_$iter) and the estimated time (estimated$\_$time) for Dykstra's projection method. We assume that the number of the iterations for Dykstra's projection method is $10^{4}$, though the actual number may be larger than $10^{4}$.
We observe from Table \ref{linfty-norm-results} that DCGM$\_$Gurobi cannot obtain the desired results for these data sets when $n>4000$ due to the memory limitation. The computing time of Dykstra's projection method grows rapidly with the increasing of $n$ and it becomes unacceptable when $n$ is greater than $10^{4}$. DCGM$\_$PALM can obtain the desired approximate solution with the given accuracy for these data sets and the computing time is much less than that of Dykstra's projection method.

In Table \ref{linfty-norm-large-graph}, we only present the performance results of DCGM$\_$PALM for the $\ell_{\infty}$ norm based metric nearness problem of large data sets. The results in this table demonstrates that our DCGM$\_$PALM can solve the $\ell_{\infty}$ norm based metric nearness problem with $n$ up to $3\times10^{4}$ and the number of the constraints up to $10^{13}$. Since the size of the constraint set is much less than the total number of the constraints and the number of the variables involved in the constraint set is much less than the total number of the variables, the SsN based PALM solves the corresponding subproblems efficiently and the total time cost of DCGM$\_$PALM is satisfactory. Take the data set p2p-Gnutella30 as an example. The numbers of the constraints for these subproblems are 12347944, 24695888, 49391776, 98783552, 104703160 and 104860475, respectively, while the total number of the constraints is $24604479499740$. The numbers of the variables involved in the corresponding subproblems are
11169993, 15163073, 18173253, 23562258, 24825734 and 24829265, respectively, while the total number of the variables is $671446335$. The maximum memory requirement is less than 45G.


\section{Conclusion}
\label{sec:Conclusion}
In this paper, we have introduced an efficient DCGM$\_$PALM to solve the $\ell_{p}$ $(p=1,2,\infty)$ norm based metric nearness problem.
An efficient SsN based PALM is applied to solve each subproblem of DCGM. We take full advantage of the special structure of the corresponding problem and overcome the storage difficulty such that we can solve the metric nearness problem with $n$ up to $3\times 10^{4}$, especially for sparse graphs. DCGM$\_$PALM can solve the $\ell_{p}\ (p=1,2,\infty)$ norm based metric nearness problem efficiently and the memory requirement is acceptable. Numerical experiments on several real graph demonstrate the efficiency of our algorithm. As far as we know, it is the first time that numerical implementations on the $\ell_{\infty}$ norm based metric nearness problem can be conducted with $n$ greater than $10^{5}$.

In theory, we have established the primal-dual error bound condition for PALM. We have also established
the equivalence between the dual nondegeneracy condition and nonsingularity of the generalized Jacobian for the inner subproblem of PALM. Furthermore, when $q(\cdot)=\|\cdot\|_{1}$ or $\|\cdot\|_{\infty}$, we have established the equivalence between the SRCQ and the nondegeneracy condition, and the equivalence between the dual nondegeneracy condition and the uniqueness of the primal solution.

\section*{Acknowledgements}
\begin{landscape}
	
\begin{table}[htbp]
\tiny
\centering
\parbox{1.1\textwidth}{\caption{The performances of DCGM$\_$PALM, DCGM$\_$Gurobi and Dykstra's projection method for the $\ell_{\infty}$ norm based metric nearness problem.}\label{linfty-norm-results}}
\begin{tabular}{|l|ccccc|ccccc|cc|}
\hline
\multicolumn{1}{|l}{Graph} & \multicolumn{5}{|c}{DCGM$\_$PALM} & \multicolumn{5}{|c}{DCGM$\_$Gurobi} & \multicolumn{2}{|c|}{Dykstra's projection method}\\ \cline{2-13}
\multicolumn{1}{|l|}{($n$,$n_{e}$)} & $\eta_{kkt}$ & $R_{g}$ & $\eta_{f}$ & pobj  & time(iter) & $\eta_{kkt}$ & $R_{g}$ & $\eta_{f}$ & pobj  & time(iter) & time$\_$per$\_$iter & estimated$\_$time \\ \hline
caGrQc &\multirow{2}{*}{7.79e-05}  & \multirow{2}{*}{7.68e-06} &\multirow{2}{*}{6.74e-03}         & \multirow{2}{*}{8.31e-02}         &\multirow{2}{*}{0:11:16(4)} & \multicolumn{5}{c|}{\multirow{2}{*}{out of memory}} & \multirow{2}{*}{0:00:16}  &  \multirow{2}{*}{45h}\\
(4158,13422) & & & & & & & & & & & & \\\hline
power &\multirow{2}{*}{3.16e-05} &\multirow{2}{*}{3.10e-06} &\multirow{2}{*}{1.68e-03} & \multirow{2}{*}{8.31e-02} & \multirow{2}{*}{0:04:37(6)} & \multicolumn{5}{c|}{\multirow{2}{*}{out of memory}} & \multirow{2}{*}{0:00:25} & \multirow{2}{*}{70h}\\
(4941,6594) & & & & & & & & & & & & \\\hline
caHepTh &\multirow{2}{*}{7.75e-05} &\multirow{2}{*}{7.08e-05} &\multirow{2}{*}{4.63e-03} & \multirow{2}{*}{8.32e-02}
& \multirow{2}{*}{0:52:27(4)}  & \multicolumn{5}{c|}{\multirow{2}{*}{out of memory}} & \multirow{2}{*}{0:02:39} & \multirow{2}{*}{441h}      \\
(8638,24806) & & & & & & & & & & & & \\\hline
caHepPh &\multirow{2}{*}{1.81e-05} & \multirow{2}{*}{6.62e-06}& \multirow{2}{*}{6.10e-03} & \multirow{2}{*}{8.31e-02}  & \multirow{2}{*}{11:13:40(8)} & \multicolumn{5}{c|}{\multirow{2}{*}{out of memory}}  & \multirow{2}{*}{0:06:23} & \multirow{2}{*}{1063h} \\
(11204,117619) & & & & & & & & & & & & \\\hline
caAstroPh &\multirow{2}{*}{6.34e-05} &\multirow{2}{*}{5.15e-06} &\multirow{2}{*}{5.92e-03} & \multirow{2}{*}{8.31e-02} & \multirow{2}{*}{63:11:53(7)} & \multicolumn{5}{c|}{\multirow{2}{*}{out of memory}} & \multirow{2}{*}{0:29:14} & \multirow{2}{*}{4872h}\\
(17903,196972)& & & & & & & & & & & & \\\hline
\end{tabular}
\end{table}

\begin{table}[htbp]
\tiny
\centering
\parbox{.9\textwidth}{\caption{The performances of DCGM$\_$PALM for the $\ell_{\infty}$ norm based metric nearness problem of large data sets.}\label{linfty-norm-large-graph}}
\begin{tabular}{lcccccccr}
\hline
Graph &\multirow{2}{*}{no$\_$cons}&\multirow{2}{*}{ active$\_$cons}& \multirow{2}{*}{iter}& \multirow{2}{*}{$\eta_{kkt}$} & \multirow{2}{*}{$R_{g}$} & \multirow{2}{*}{$\eta_{f}$} & \multirow{2}{*}{pobj} & \multirow{2}{*}{time(DCGM+FEAS $|$ PALM)} \\
($n$,$n_{e}$) & & & & & & & &\\ \hline
CA-CondMat&\multirow{2}{*}{4.9e+12} &\multirow{2}{*}{52044885 }&\multirow{2}{*}{4} &\multirow{2}{*}{3.87e-05 }  &\multirow{2}{*}{8.34e-05 }  & \multirow{2}{*}{8.08e-03 }        & \multirow{2}{*}{8.32e-02 }         &\multirow{2}{*}{14:10:41(2:36:41 $|$ 11:34:00) }  \\
(21363,91342) & & & & & & & & \\\hline
p2p-Gnutella25&\multirow{2}{*}{5.8e+12}&\multirow{2}{*}{67528494} &\multirow{2}{*}{6} &\multirow{2}{*}{3.66e-05} &\multirow{2}{*}{7.47e-05 } &\multirow{2}{*}{2.07e-03} & \multirow{2}{*}{8.32e-02} & \multirow{2}{*}{17:07:44(4:29:54 $|$ 12:37:50) }  \\
(22663,54693) & & & & & & & & \\\hline
p2p-Gnutella30&\multirow{2}{*}{2.5e+13}&\multirow{2}{*}{104860475} &\multirow{2}{*}{6} &\multirow{2}{*}{1.57e-05} &\multirow{2}{*}{1.55e-05} &\multirow{2}{*}{2.44e-03} & \multirow{2}{*}{8.31e-02} & \multirow{2}{*}{43:56:41(18:25:19 $|$ 25:31:22) }  \\
(36646,88303) & & & & & & & & \\\hline
\end{tabular}
\end{table}

\end{landscape}

\bibliographystyle{amsplain}
\bibliography{ref}

\end{document}